\tikzset{negated/.style={
        decoration={markings,
            mark= at position 0.5 with {
                \node[transform shape] (tempnode) {$\backslash$};
            }
        },
        postaction={decorate}
    }
}
\tikzset{
    vertex/.style={draw,circle,thick, inner sep=1.5pt,minimum size=6pt,
    },
    edge/.style={thick},
    dedge/.style = {->,> = latex',thick}
}
\definecolor{asparagus}{rgb}{0.53, 0.66, 0.42}
\pgfplotsset{compat=1.18}
\newtheorem{theo}{Theorem}[section]
\newtheorem{thm}[theo]{Theorem}
\newtheorem{lem}[theo]{Lemma}
\newtheorem{prop}[theo]{Proposition}
\newtheorem{cor}[theo]{Corollary}
\newtheorem{defn}[theo]{Definition}
\theoremstyle{definition}
\newtheorem{rem}[theo]{Remark}
\newtheorem{example}[theo]{Example}
\newtheorem{problem}[theo]{Problem}
\numberwithin{equation}{section}
\begin{document}

\title[On the subdirect product of graph bundles]{On the subdirect product of  graph bundles}

\author[Y. Bavuma]{Yanga Bavuma}
\address{Yanga Bavuma\endgraf
Department of Mathematics and Applied Mathematics\endgraf
University of Cape Town\endgraf
Private Bag X1, Rondebosch, 7701\endgraf
Cape Town, South Africa\endgraf}

\author[F.G. Russo]{Francesco G. Russo}
\address{Francesco G. Russo\endgraf
School of Science and Technology\endgraf
Universit\'a degli Studi di Camerino\endgraf
via Madonna delle Carceri 9, 62032\endgraf 
Camerino, Italy\endgraf
and\endgraf
Department of Mathematics and Applied Mathematics\endgraf
University of the Western Cape\endgraf
Private Bag X17, 7535,  Bellville, South Africa\endgraf}

\author[S. Spessato]{Stefano Spessato}
\address{Stefano Spessato\endgraf
Dipartimento di Ingegneria\endgraf
Universit\`{a} degli Studi Niccol\`{o} Cusano \endgraf
Via Don Carlo Gnocchi 3, 00166 \endgraf
Rome, Italy\endgraf}

\keywords{Subdirect product; graph bundles; $K$-theory; Cayley graphs\endgraf
\textit{Mathematics Subject Classification 2020}: Primary 05C76, 05C25; Secondary 05C90}

\begin{abstract}
The subdirect product of two finite groups $A$ and $B$ is defined as a subgroup of the direct product $A \times B$, which is a well-known notion in finite group theory. While it is clear that, under appropriate choices of sets of generators $S$, $S_A$ and $S_B$, the Cayley graph $Cay(A \times B, S)$ corresponds to the Cartesian product $Cay(A, S_A) \square Cay(B, S_B)$ of two graphs, there is no analogue at the level of graph product that reflects the notion of subdirect product of groups. This is precisely the problem which we  discuss here. By using the concept of graph bundles and the corresponding pullbacks, we introduce an operation on graph bundles such that the Cayley graph of the subdirect product of two groups can be described as the total space of the product of the Cayley graphs. This allows us to define the so-called ``network $K$-theory group of a graph'', inspired by the notion of topological $K$-theory, and we are able to  investigate an interesting  functor from the category of graphs to the category of abelian groups.
\end{abstract}

\maketitle

\tableofcontents

\section{Introduction}

The origin  and the motivations of the present contribution should be found in some recent results on the study of the combinatorial structures, which  appear in  quantum information theory and  complex dynamical systems in \cite{bbr1, b, bddr}.

It has been known for a long time that the algebraic combinatorics plays a crucial role in many topics of quantum mechanics and the consideration of the finite group of order 16 of the Pauli  matrices offers a significant example of relevant connections between algebraic combinatorics, topological algebra, group theory and quantum mechanics.  Planat and others \cite{ps1, ps2} devoted large part of their investigations to the structure of Pauli groups and developed a series of generalizations of the usual Pauli group of order 16, involving many ideas from mathematical physics,  representation theory, group theory and combinatorics.

On the other hand, structural results (at a purely abstract level) provide another framework of generalization for the Pauli groups on 2-qubits and \cite{bbr1, b, bddr} show that the notion of product in graph theory and in group theory can be very useful for topics of  quantum mechanics.

It should be noted that there is a huge amount of  research in recent years, because the quantum information theory is growing in many directions, especially under the profile of the connections with topological graph theory and spectral graph theory.  Therefore there is also a growing interest in the literature for those mathematical structures which are helpful to describe qubits.

The products of groups and graphs are, in fact, the main topic of the present article. There are numerous examples of graph operations \cite{neps1, neps2, handbook, sab} as well as operations on groups \cite[Chapter A, \S 19, \S 20]{dh}. These two families of products generally have no direct relation and may exist independently on each other. However, the connections between algebra and graph theory are numerous, and several studies addressed objects and products that lie at the intersection of these two research fields, see \cite{bddr, belardo1,  godsil}.

One of the most productive directions, which was explored to establish connections between group theory and combinatorics, involves Cayley graphs. These are classical objects in geometric group theory. In fact Cayley graphs, under an appropriate choice of generators, have some geometric and spectral features which encode many algebraic properties of the underlying group. There is, therefore, a research line  on determining whether  a product of graphs corresponds to a product of groups, or not. Accordingly,  the Cayley graph of the  product of two finite groups may coincide with the graph product of the associated Cayley graphs, or not (see \cite{alon, donno1}). The present contribution fits precisely  this line of research.

We should mention that it is not immediate to  recognize the structure of subdirect product among finite groups, see \cite[Chapter A, \S 19, \S 20]{dh}. In fact  there are several finite groups that present sophisticated structures and different forms of products. For instance the aforementioned Pauli group of order 16 (which is a relatively small size for a finite group) has a very rich structure at the level of abelian subgroups and abelian quotients. Already in similar situations, it is difficult to recognize an appropriate structure of product, but  once this process of recognition of a product is clarified, we may easily classify abelian subgroups and quotients. 

In principle, one could extend the notion of subdirect product to categories, which are different from the category of finite groups (with corresponding group homomorphisms). When we do this,  a series of complications emerge naturally. First of all, we should formalize what we mean for ``subdirect product'' in a category and which morphisms of the category we shall consider. It is well known that the category of topological groups (see \cite{hofmor}) possesses homomorphisms of groups which are not necessarily continuous. Therefore one could be in situations, where certain morphisms preserve a structure on a set, but not another.  A more delicate problem is to select appropriate contexts, where products indeed exist. In fact it is also well known that there are categories for which this is not always the case, see \cite[Appendix 3]{hofmor}.  

In the present manuscript our focus, from an algebraic point of view, is on the subdirect product of finite groups. This operation, given two surjective morphisms $\phi_1:A \longrightarrow C$ and $\phi_2:B \longrightarrow C$ between finite groups, yields a new finite group $A \times_C B$ which is a subgroup of the direct product $A \times B$, which is a finite  group as well. In order to construct  $A \times_C B$, it is necessary to use the notion of  \textit{pullback} of finite groups along a morphism: in particular the subdirect product  $A\times_C B$ can be defined as pullback, involving the  surjective morphisms $\phi_1$ and $\phi_2$.

The notion of  pullback is indeed  fundamental  in category theory, and in general, it is an interesting question to ask whether it is admissible for a prescribed category, or not. For this reason, it is easy to find different definitions of pullback: from group theory to differential geometry, and even in cohomological theories \cite{stefano}.

In order to find a construction in graph theory that is an appropriate analogue of the subdirect product of two finite groups, one must first understand how to interpret the surjective morphisms which are involved in the definitions. After a careful analysis of the existing literature, we noted that ``a good candidate of factor'' for the product, which we intend to define, would have been found   in theory of the graph bundles.

Originally the graph bundles were introduced in an unpublished manuscript of Pisanski and Vrabec and deal with a topic of great interest in topological graph theory, see \cite{kwak, kwakk, Pisanski}. Clearly the graph bundles are inspired by the concept of fiber bundles which is typical of the algebraic topology; they are objects of a purely combinatorial nature and have been studied and generalized by many authors, see \cite{tensore, larrion, lessico}. More specifically, an $F$-graph bundle is a triple $(X, p, \Gamma)$, where $X$, $F$, and $\Gamma$ are graphs, and $p: X \longrightarrow \Gamma$ is a surjective graph morphism. The distinctive feature of $F$-graph bundles is that (similarly to fiber bundles in topology) the preimage of a neighborhood of a vertex $v$ in $\Gamma$ under $p$ is isomorphic to the Cartesian product of the neighborhood itself with $F$. The definition of graph bundles (and much more concerning the graphs which we use here) can be found in Section \ref{section2}. The references  \cite{ttd, kos} will be followed, in order to make clear that we model the results on some common ground between algebraic topology, differential geometry and graph theory.

The second point to address is the correct interpretation of the concept of the pullback of a graph bundle along a graph morphism. We devote  Section \ref{section3} to its discussion, beginning with the definition of a pullback graph bundle and continuing with the verification of all the properties which are expected in terms of  commutativity of diagrams. Accordingly, our first  main result is Theorem \ref{thm0}. The section ends with  our second main result: the description of the adjacency matrix of a pullback of  graph bundlea in terms of the original graph bundle, namely Theorem \ref{1stmaintheorem}. The corresponding adjacency matrix  and   morphisms are described by our third main result, which is given by Theorem \ref{1stmaintheorem}.

In Section \ref{section4}, the subdirect product of graph bundles is introduced (and denoted by $\boxplus$), and relevant constructions  are illustrated, such as its good behavior with respect to pullbacks. This deals with our fourth main result, which is Theorem \ref{sumstab}. Additionally, a formula of the adjacency matrix is provided, presenting Theorem \ref{4thmaintheorem} which is our fifth main result.

After proving the monoidal structure of graph bundles defined on the same base, the $K$-theory of a network $\Gamma$ is introduced. The term \textit{network} is used exclusively in this context, in order to avoid confusion with  the $K$-theory of a graph when we look a it  as a topological space. In fact the $K$-theory involves some algebraic tools with different definitions and meanings depending on the contexts: it has brought enormous benefits to many branches of mathematics, from the algebraic topology, to the index theory and  to several areas of mathematical physics. The section ends with Corollary \ref{k.t.func}, where it is shown that the network $K$-theory provides a contravariant functor between the category of graphs and the category of abelian groups.
 
Finally, in Section \ref{section5}, after  we recall the notions of  Cayley graph of a finite group and  of  subdirect product of two finite groups, we explore new connections of these notions with that of  subdirect product of  two graphs. First, it is shown in Theorem \ref{6thmaintheorem} (our sixth main result) that for a given  group surjection $\phi: A \longrightarrow B$ and  generating sets $S_0$ for $\ker(\phi)$ and $S_1$ for $B$, there exists a generating set $S_\phi$ such that $(Cay(A, S_\phi), \phi, Cay(B, S_1))$ is a graph bundle with fiber $Cay(\ker(\phi), S_0)$. 

Then we prove in Theorem \ref{7thmaintheorem}, which is our seventh  main result, that given two group morphisms $\phi_1: A \longrightarrow C$ and $\phi_2: B \longrightarrow C$, the total space of the subdirect product of the graph bundles associated with these morphisms is the Cayley graph of the subdirect product of $A$ and $B$. In other words, with an appropriate choice of generating sets,
\begin{equation} Cay(A \times_C B, S_{\varphi}) = Cay(A, S_{\phi_1}) \boxplus Cay(B, S_{\phi_2}). \end{equation}
It should be mentioned that the logic of the investigation of \cite[Theorems 3.6]{bddr} follows the philosophy of invariance of the operator of Cayley graph via appropriate products, and we propose a similar approach in Theorems \ref{6thmaintheorem} and \ref{7thmaintheorem} in terms of graph bundles, namely our last two main results.

Open problems are illustrated along the manuscript, where we found that the principle of analogies with similar situations in algebraic topology and differential geometry clashes with concrete examples in graph theory: after all the discrete structures require naturally some types of morphisms which  cannot preserve the continuity.

\section{Preliminaries on graphs}\label{section2}
According to the usual definition of graph in \cite{spectrabook, crs}, we look at a \textit{graph} $\Gamma$ just as a couple  $\Gamma = (V_\Gamma, E_\Gamma)$ of two sets, where the elements of $V_\Gamma$ are called \textit{vertices} of  $\Gamma$  and those of   \begin{equation} E_\Gamma=\{ \ \{x,y\}  \mid x,y \in V_\Gamma \ \mbox{with}  \ x \neq y \ \}\end{equation}   are called   \textit{edges} of $\Gamma$. If   $\{x,y\} \in E_\Gamma$, then $x$ and $y$ are said to be \textit{adjacent}  (writing briefly $x \sim y$).  By default, we have the concept of  \textit{adjacency matrix} of a graph $\Gamma$; this is the real matrix $A_\Gamma$ indexed by the vertices of $\Gamma$ such that the entry $a_{vw} = 1$ if $v \sim w$ or $0$ otherwise. See \cite[Chapter 1]{crs}, or \cite[Chapter 1]{spectrabook}, for details on this well known notion.

The adjacency matrix $A_\Gamma$  of a graph $\Gamma$  depends  on the choice of an order on  $V_\Gamma$, but its spectrum 
\begin{equation} \mathrm{spec}(A_\Gamma)=\{\lambda \mid \lambda \ \mbox{is eigenvalue of } \ A_\Gamma\}  \end{equation}
does not depend on the presence of an order on $V_\Gamma$. This makes the spectrum of $A_\Gamma$ a significant tool, in order to study the properties of $\Gamma$. Note that the spectrum of $A_\Gamma$ is called the \textit{spectrum of } $\Gamma$  without ambiguity, since it is invariant under graph isomorphisms, and is usually denoted by $\mathrm{spec}(\Gamma)$. In other words, 
\begin{equation} \mathrm{spec}(A_\Gamma)=\mathrm{spec}(\Gamma).\end{equation}
The properties of the spectrum of $\Gamma$ are called the \textit{spectral properties of $\Gamma$} and the study of them is the main topic of spectral graph theory (see for example \cite{spectrabook, crs}). It is well known in the literature that geometric properties on $\Gamma$ can be deduced from analytic properies of $\mathrm{spec}(\Gamma)$; just to give an idea,  Sachs Theorem (see  \cite{dragan1} for a recent formulation of Dress and Stevanovi\'c) shows that bipartite  graphs are characterized by a symmetric spectrum of the adjacency matrix  with respect to zero. We invite the reader to look at the references \cite{spectrabook, crs} for other classical results in spectral graph theory.
\\
Let's recall some other well known notions in graph theory, which are useful to our scopes. If we have two graphs $\Gamma_1 = (V_{\Gamma_1}, E_{\Gamma_1})$ and $\Gamma_2 = (V_{\Gamma_2}, E_{\Gamma_2})$, there are many applications which we may consider, but we are going to introduce a notion of morphism which is reported in \cite{larrion}.
\begin{defn}[See \cite{larrion}]\label{morphism}
A \textbf{morphism of graphs} $f$ is a function $f:V_{\Gamma_1} \longrightarrow V_{\Gamma_2}$ such that for each $v \sim w$ in $V_{\Gamma_1}$ we have $f(v) \sim f(w)$ or $f(v) = f(w)$. 
\end{defn}
In Definition \ref{morphism}, we may abuse the notation and 
write $f: \Gamma_1 \longrightarrow \Gamma_2$, keeping in mind that $f$ is formally defined on the vertices only. Note that Definition \ref{morphism} as most of the notions which we are going to recall in Section \ref{section2} are typical of topological graph theory. It is useful to mention that similar notions can be found on some works of Pisanski and others \cite{Pisanski, lessico}.

\begin{rem}
Usually, the term \textit{morphism of graphs} denotes a function $f: V_{\Gamma_1} \longrightarrow V_{\Gamma_2}$ such that for each $v \sim w$ in $V_{\Gamma_1}$ then $f(v) \sim f(w)$, not allowing the case $f(v) = f(w)$. The Definition \ref{morphism} is more general, and if a function $f$ is a morphism of graphs in the classical sense, then it will be said that the function \textit{preserves the edges}.
\end{rem}

Given a morphism  $f: \Gamma_1 \longrightarrow \Gamma_2$ of graphs, the \textit{image} of $f$ is the graph $f(\Gamma_1)$ whose vertices are given by $f(V_{\Gamma_1})$ and  edges by $E_{\Gamma_2}$, that is,   by the elements $\{f(v), f(w)\}$ such that $v \sim w$ in $\Gamma_1$. Given a subset $S$ of $V_\Gamma$, the \textit{induced subgraph} of $S$ is the graph $\Gamma_S = (S, E_S)$, where $E_S$ is the subset of $E_\Gamma$ given by the couples $\{x,y\}$ such that $x$ and $y$ are both vertices in $S$. Many facts of general topology apply to graphs in the present situation: for instance, the composition of two graph morphisms is a graph morphism too. A morphism  $f: \Gamma_1 \longrightarrow \Gamma_2$ is \textit{injective} (resp. \textit{surjective}) if it is injective (resp. surjective) as function $f:V_{\Gamma_1} \longrightarrow V_{\Gamma_2}$. Given a morphism of graphs $f:\Gamma_1 \longrightarrow \Gamma_2$ and given $v \in V_{\Gamma_2}$, the \textit{fiber} $f^{-1}(v)$ is the induced subgraph of $\Gamma_1$ whose vertex set is the preimage of $v$. Moreover $f$ is an \textit{isomorphism of graphs} if it is injective and surjective at the same time. Observe that each isomorphism of graphs preserves the edges. An isomorphism of graphs $f: \Gamma \longrightarrow \Gamma$ is called an \textit{automorphism of} $\Gamma$ and the group of automorphisms of $\Gamma$ is denoted by $\mathrm{Aut}(\Gamma)$.

\subsection{Cartesian product and strong product of graphs}
Let's consider $\Gamma_1 = (V_{\Gamma_1}, E_{\Gamma_1})$ and $\Gamma_2 = (V_{\Gamma_2}, E_{\Gamma_2})$ which are two graphs.  A very common construction in algebraic combinatorics is given by the graph product, see \cite{bddr,  sab}. It is  worth mentioning that the Cartesian product of graphs fits as a special case of NEPS  in the sense of Cvetkovi\'{c} and  Lu\v{c}i\'{c} (see \cite{neps1, neps2}).

\begin{defn}[See \cite{bddr}, Definition 2.9]\label{cproduct}
The \textbf{Cartesian product} $\Gamma_1  \ \square  \ \Gamma_2$ of the two graphs $\Gamma_1$ and $\Gamma_2$ is the graph whose vertex set is $V_{\Gamma_1} \times V_{\Gamma_2}$ and $(x,y) \sim (x',y')$ if and only if $x \sim x'$ and $y=y'$ or $x=x'$ and $y\sim y'$.
\end{defn}
This notion is largely discussed in \cite{handbook}. The symbol $\square$ recalls the Cartesian product of two copies of $K_2$ which is the graph with two vertices and an edge connecting them. However, it is very easy to produce some different examples of Cartesian products. 
\begin{example}
Let $n$ be a  natural number. A \textit{complete} graph with $n$ vertices is the graph $K_n$ where $\vert V_{K_n} \vert = n$ and $x \sim y$ for each $x \neq y$ in $V_{K_n}$. The graph at the top left in Figure \ref{products} offers an example of  Cartesian product $K_2  \ \square \  K_3$.
\end{example}
One of the best properties of the Cartesian products (from a spectral point of view) is that the adjacency matrix $A_{\Gamma_1  \ \square \  \Gamma_2}$ is well described via the adjacency matrices of $\Gamma_1$ and $\Gamma_2$.

\begin{defn}[See \cite{belardo1}, \S 2.2] Let $A= (a_{ij})$ and $B = (b_{rs})$ be two real matrices of size $n \times m$ and $p \times q$. The \textit{Kronecker product} $A \otimes B$ of $A$ and $B$ is the $np \times mq$-matrix which is block-wise described as follows: the matrix $A \otimes B$ is a $n \times m$ block matrix, where the $(i,j)$-block is the matrix of size $p \times q$
\begin{equation}
    a_{ij}B = \begin{bmatrix} a_{ij}b_{11} & \dots & a_{ij} b_{1q} \\ \vdots & \ddots &\vdots \\
    a_{ij}b_{p1} & \dots & a_{ij} b_{pq}\end{bmatrix}.
\end{equation}
\end{defn}
We shall also recall from \cite{spectrabook, crs} that given $\Gamma_1$ and $\Gamma_2$, we may fix an order on  $V_{\Gamma_1}$ and another order on $V_{\Gamma_2}$. These orders induce the lexicographical order on $V_{\Gamma_1} \times V_{\Gamma_2} = V_{\Gamma_1 \ \square  \ \Gamma_2}$. If $A_{\Gamma_1}$ and $A_{\Gamma_2}$ are the adjacency matrices with respect to the orders on $V_{\Gamma_1}$ and $V_{\Gamma_2}$, then the adjacency matrix of $\Gamma_1  \ \square  \ \Gamma_2$ with respect to the lexicographical order is given by
\begin{equation}\label{agammaproduct1}
    A_{\Gamma_1 \ \square  \ \Gamma_2} = A_{\Gamma_1} \otimes I_{\vert V_{\Gamma_2}\vert} + I_{\vert V_{\Gamma_1}\vert}\otimes A_{\Gamma_2},
\end{equation}
where $I_{\vert V_{\Gamma_i}\vert}$ is the identity matrix of size $\vert V_{\Gamma_i}\vert$ for $i=1,2$. In the present situation  there is a very simple description of $\mathrm{spec}(\Gamma_1 \ \square \ \Gamma_2)$ via a linear combination. In other words, if  
\begin{equation}
\label{agammafactors1}
\mathrm{spec}(\Gamma_1) =\{\lambda_1, \ldots, \lambda_{\vert V_{\Gamma_1}\vert} \mid  \lambda_1 \geq \dots \geq \lambda_{\vert V_{\Gamma_1}\vert}\} \ \mbox{and} \ \mathrm{spec}(\Gamma_2) =\{\mu_1, \ldots, \mu_{\vert V_{\Gamma_2}\vert} \mid  \mu_1 \geq \dots \geq \mu_{\vert V_{\Gamma_2}\vert}\}
\end{equation} 
then we have
\begin{equation}
\label{agammaspectrum1}\mathrm{spec}(\Gamma_1 \ \square \ \Gamma_2) = \{   \lambda_i + \mu_j \mid  \lambda_i \in \mathrm{spec}(\Gamma_1), \  \mu_j \in \mathrm{spec}(\Gamma_2), \ i,j \in \{1, 2\}\}.
\end{equation}
Of course, what we have just done for two graphs can be extended without problems to finitely many graphs and by induction to countably many graphs as well, see \cite{spectrabook, crs}. Let's go ahead and recall some further notions which will be useful for the proof of our main results later on.
\begin{defn}[See \cite{spectrabook}, \S 1.4.8]\label{sproduct}
The \textbf{strong product} $\Gamma_1  \ \boxtimes \ \Gamma_2$ of $\Gamma_1$ and $\Gamma_2$ is defined as the graph whose vertex set is $V_{\Gamma_1} \times V_{\Gamma_2}$ and $(x,y)$ and $(x',y')$ are adjacent if and only if one of the following two conditions holds: \begin{itemize}
    \item[(1).] $x \sim x'$ and $y=y'$, $x=x'$ and $y\sim y'$;
\item[(2).] $x \sim x'$ and $y \sim y'$.
\end{itemize} 
\end{defn}

\begin{example}Similarly to the Cartesian product $K_2  \ \square \ K_3$, we offer an example of strong product  $K_2 \boxtimes K_3$ in the top right of Figure \ref{products}. It is instructive to compare the two constructions which are very different, even if they arise from a not so different idea of product of two combinatorial structures. \end{example}

\begin{figure}[h]
\begin{tikzpicture}[scale=1 ]
\node[vertex,fill=black] (13) at  (-6, 0.75) [scale=0.8,label=right:$2$] {};
\node[vertex,fill=black] (23) at  (-5, -1) [scale=0.8,label=right:$3$]{};
\node[vertex,fill=black] (33) at  (-7, -1)[scale=0.8,label=left:$1$] {};
\node[vertex,fill=black] (43) at  (-6, 2.5) [scale=0.8,label=right:$5$] {};
\node[vertex,fill=black] (53) at  (-3.5, -2) [scale=0.8,label=right:$6$] {};
\node[vertex,fill=black] (63) at  (-8.5, -2) [scale=0.8,label=left:$4$] {};

\node[vertex,fill=black] (11) at  (1, 0.75) [scale=0.8,label=right:$2$] {};
\node[vertex,fill=black] (21) at  (2, -1) [scale=0.8,label=right:$3$]{};
\node[vertex,fill=black] (31) at  (0, -1)[scale=0.8,label=left:$1$] {};
\node[vertex,fill=black] (41) at  (1, 2.5) [scale=0.8,label=right:$5$] {};
\node[vertex,fill=black] (51) at  (3.5, -2) [scale=0.8,label=right:$6$]{};
\node[vertex,fill=black] (61) at  (-1.5, -2)[scale=0.8,label=left:$4$] {};

\node[vertex,fill=black] (12) at  (-6, -5.25) [scale=0.8,label=above:$2$] {};
\node[vertex,fill=black] (22) at  (-5, -7) [scale=0.8,label=right:$3$]{};
\node[vertex,fill=black] (32) at  (-7, -7)[scale=0.8,label=left:$1$] {};
\node[vertex,fill=black] (42) at  (-6, -3.5) [scale=0.8,label=right:$5$] {};
\node[vertex,fill=black] (52) at  (-3.5, -8) [scale=0.8,label=right:$6$] {};
\node[vertex,fill=black] (62) at  (-8.5, -8) [scale=0.8,label=left:$4$] {};

\node[vertex,fill=black] (14) at  (1, -5.25) [scale=0.8,label=left:$2$] {};
\node[vertex,fill=black] (24) at  (2, -7) [scale=0.8,label=below:$3$]{};
\node[vertex,fill=black] (34) at  (0, -7)[scale=0.8,label=left:$1$] {};
\node[vertex,fill=black] (44) at  (1, -3.5) [scale=0.8,label=right:$5$] {};
\node[vertex,fill=black] (54) at  (3.5, -8) [scale=0.8,label=right:$6$] {};
\node[vertex,fill=black] (64) at  (-1.5, -8) [scale=0.8,label=left:$4$] {};

\draw[edge,thick]  (13) --  (43) node[midway, below left] {};
\draw[edge,thick]  (23) --  (53) node[midway, below left] {};
\draw[edge,thick]  (33) --  (63) node[midway, below left] {};
\draw[edge,thick]  (13) --  (23) node[midway, below left] {};
\draw[edge,thick]  (13) --  (33) node[midway, below left] {};
\draw[edge,thick]  (43) --  (53) node[midway, below left] {};
\draw[edge,thick]  (43) --  (63) node[midway, below left] {};
\draw[edge,thick]  (53) --  (63) node[midway, below left] {};
\draw[edge,thick]  (23) --  (33) node[midway, below left] {};

\draw[edge,thick]  (11) --  (41) node[midway, below left] {};
\draw[edge,thick]  (21) --  (51) node[midway, below left] {};
\draw[edge,thick]  (31) --  (61) node[midway, below left] {};
\draw[edge,thick]  (11) --  (21) node[midway, below left] {};
\draw[edge,thick]  (11) --  (31) node[midway, below left] {};
\draw[edge,thick]  (41) --  (51) node[midway, below left] {};
\draw[edge,thick]  (41) --  (61) node[midway, below left] {};
\draw[edge,thick]  (51) --  (61) node[midway, below left] {};
\draw[edge,thick]  (21) --  (31) node[midway, below left] {};
\draw[edge,thick]  (11) --  (51) node[midway, below left] {};
\draw[edge,thick]  (11) --  (61) node[midway, below left] {};
\draw[edge,thick]  (21) --  (61) node[midway, below left] {};
\draw[edge,thick]  (21) --  (41) node[midway, below left] {};
\draw[edge,thick]  (31) --  (51) node[midway, below left] {};
\draw[edge,thick]  (31) --  (41) node[midway, below left] {};

\draw[edge,thick]  (12) --  (32) node[midway, below left] {};
\draw[edge,thick]  (42) --  (62) node[midway, below left] {};
\draw[edge,thick]  (52) --  (42) node[midway, below left] {};
\draw[edge,thick]  (52) --  (32) node[midway, below left] {};
\draw[edge,thick]  (12) --  (22) node[midway, below left] {};
\draw[edge,thick]  (22) --  (62) node[midway, below left] {};

\draw[edge,thick]  (14) --  (34) node[midway, below left] {};
\draw[edge,thick]  (44) --  (64) node[midway, below left] {};
\draw[edge,thick]  (54) --  (44) node[midway, below left] {};
\draw[edge,thick]  (54) --  (34) node[midway, below left] {};
\draw[edge,thick]  (14) --  (24) node[midway, below left] {};
\draw[edge,thick]  (24) --  (64) node[midway, below left] {};
\draw[edge,thick]  (14) --  (44) node[midway, below left] {};
\draw[edge,thick]  (24) --  (54) node[midway, below left] {};
\draw[edge,thick]  (64) --  (34) node[midway, below left] {};
\end{tikzpicture}\caption{From the top left getting to the bottom right we have the Cartesian produc  $K_2 \square K_3$; the strong product $K_2 \boxtimes K_3$; $C_6$ which is the total space of a $2$-covering over $K_3$; $M_3$ which is the M\"obius ladder with $3$ vertices.}\label{products}
\end{figure}
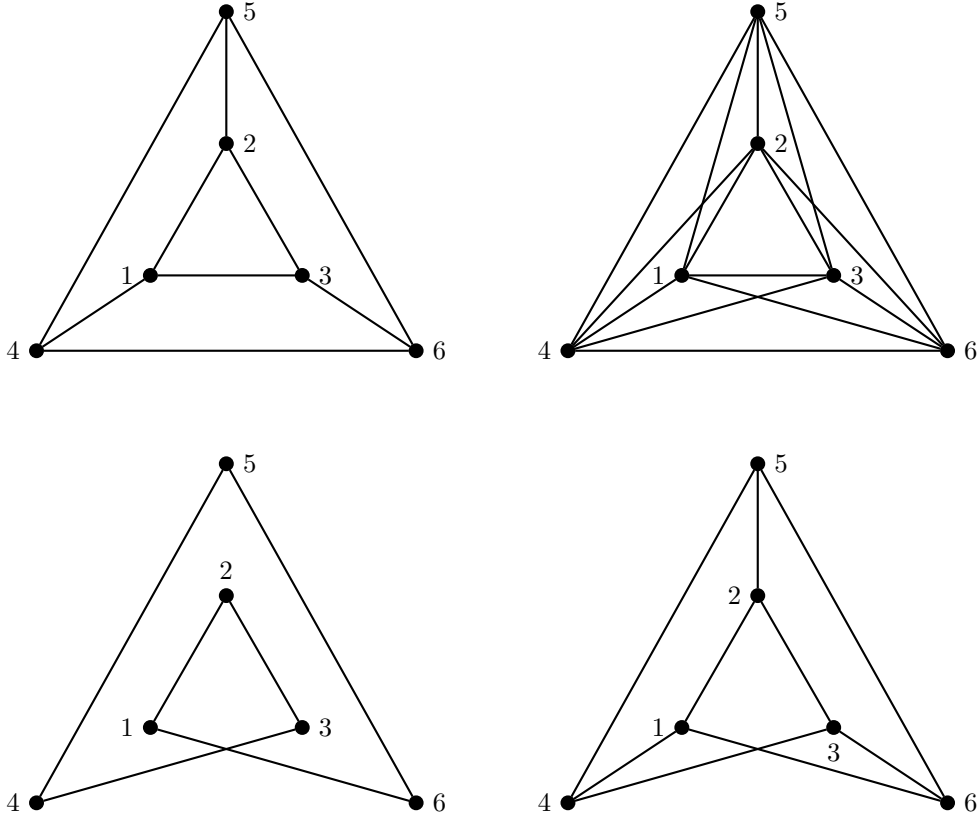

What is the corresponding version of \eqref{agammaproduct1} and \eqref{agammaspectrum1} which we shall formulate when we have two orders of $\Gamma_1$ and $\Gamma_2$ and consider
 $A_{\Gamma_1  \ \boxtimes \  \Gamma_2}$  with respect to the lexicographical order? The answer is summarized by the following formula, which is also known in the literature (see \cite{spectrabook, crs})
\begin{equation}\label{agammaproduct2}
A_{\Gamma_1 \ \boxtimes \ \Gamma_2} = A_{\Gamma_1} \otimes I_{\vert V_{\Gamma_2}\vert} + I_{\vert V_{\Gamma_1}\vert}\otimes A_{\Gamma_2} + A_{\Gamma_1}\otimes A_{\Gamma_2}.
\end{equation}
If we assume that \eqref{agammafactors1} continue to be valid, then \eqref{agammaspectrum1} is no longer a linear combination of the eigenvalues which arise from the spectra of $\Gamma_1$ and $\Gamma_2$, but there is now a mixed term 
\begin{equation}\label{agammaspectrum2}
\mathrm{spec}(\Gamma_1 \   \boxtimes  \ \Gamma_2) = \{  \lambda_i + \mu_j +  \lambda_i\mu_j \mid  \lambda_i \in \mathrm{spec}(\Gamma_1), \  \mu_j \in \mathrm{spec}(\Gamma_2), \ i,j \in \{1, 2\}\}.\end{equation}
Again one can generalize formulas of the above type, but we do not need to go ahead in this direction here. 

\subsection{Covering spaces for graphs} Another fundamental notion in graph theory is the notion of covering graph, see \cite[\S 6.4, \S 14.2.4]{spectrabook} and \cite{gross2}. This can be understood as the combinatorial equivalent for graphs of covering of a topological space. 

Given a vertex $v$ of a graph $\Gamma$, its \textit{neighborhood} $N(x)$ is the graph whose vertex set is $\{v\} \cup A_v$ where $A_v = \{w \in V_\Gamma \mid v \sim w\}$ and the edge set is given by $\{\{v, w\} \mid w \in A_v\}$. Let $p: \Gamma_1 \longrightarrow \Gamma_2$ be a morphism of graphs. The \textit{restriction} of $p$ to a neighborhood $N(x)$ is the graph morphism $p_{\vert x}: N(x) \longrightarrow p(N(x))$. Let $v = p(x)$ and assume that $p(N(x)) = N(v)$ defined as $p_{\vert x}(y) = p(y)$. A \textit{lifting} of $p$ around $v$ is now a graph morphism \begin{equation} l_{v,x} \ : \  N(v) \longrightarrow N(x) \ \ \mbox{such that} \ \ l_{v,x} = p_{\vert x}^{-1}.\end{equation}
Several authors \cite{clarke, spectrabook, crs, gross2} worked on the so called ``covering projections of graphs''.

\begin{defn}[See \cite{gross2}, Chapter 2]\label{kfold}
Let $\widetilde{\Gamma}$ and $\Gamma$ be two graphs with $p: \widetilde{\Gamma} \longrightarrow \Gamma$   surjective morphism of graphs and  $k$  natural number. The triple $(\widetilde{\Gamma}, p, \Gamma)$ is a \textbf{k-fold covering of } $\Gamma$ if the following conditions hold simultaneously: for each vertex $v$ of $\Gamma$
\begin{itemize}
    \item[{\rm (1).}]  $p^{-1}(v)$ contains exactly $k$ vertices,
    \item[{\rm (2).}] for each $x$ in $\widetilde{\Gamma}$ there exists a lifting  $l_{v,x}:N(v) \longrightarrow N(x)$.
\end{itemize}
\end{defn}
\begin{example}\label{exem.cover}
Let $C_n = (V_{C_n}, E_{C_n})$ be the graph with vertices and edges \begin{equation} V_{C_n} = \{1,2, \dots, n\} \  \ \mbox{and} \ \ E_{C_n} = \{\{n,1\}, \{i, i+1\} \vert i=1, \dots, n-1.\}.\end{equation} Note that $C_3 \cong K_3$, while  $C_6$ is depicted on the bottom left of Fig. \ref{products}. We may consider $p: C_6 \longrightarrow C_3$ as the projection which is defined by $p(x) = [x]+1$, where $[x]$ the remainder of the division of $x$ by $3$. The triple $(C_6, p, C_3)$ is an example $2$-graph covering of $C_3$, so of Definition \ref{kfold} for $k=2$. Of course, we may generalize this example to $n \ge 3$ and $k \ge 2$, but it is instructive to  visualize for $n=3$ and $k=2$ in Fig. \ref{products}  the difficulties which we may encounter when we start just with  elementary factors in a product (or in a $k$-fold covering) of two graphs.
\end{example}
In order to understand that Definition \ref{kfold} is a natural variation on the theme of covering space in the context of algebraic topology. First of all, given two topological spaces $\widetilde{X}$ and $X$, a continuous surjective map  $p \colon  \widetilde{X} \to X$ is called  \textit{covering map} if for all $x \in X$, there exists an open neighborhood $U$ of $x$ such that  
\begin{equation}\label{usualcovering1} p^{-1}(U) =\bigcup_{i \in I} U_i, \ \mbox{where} \ U_i \cap U_j =\emptyset \ \forall i \neq j \ \ \mbox{and} \ \ p|U_i : U_i \to  U \ \mbox{is homeomorphism} \ \forall i \in I. \end{equation}
This is a classical notion in homotopy theory and algebraic topology, see \cite[Chapters 17, 19, 20, 21]{kos}. Another excellent reference  is given by \cite[Chapters I, II]{ttd}, where the reader can find an approach which is typical of the  transformation groups. The topological space $\widetilde{X}$ is called \textit{ covering space} for $X$ and has been largely studied in the literature, see \cite[Chapter 10, Appendix 2]{hofmor}. For any point $x$,  \begin{equation}\label{usualcovering2} p^{-1}(x)=\{\widetilde{x} \in \widetilde{X} \ | \ p(\widetilde{x})=x \}\end{equation} is  what one thinks at a \textit{ fiber over x} from the perspective of the algebraic topology and homotopy theory. The special open neighborhoods $U$ of $x$, involved in \eqref{usualcovering1} are said to be \textit{evenly covered neighborhoods} and  form an open cover of  $X$. Indeed, most of the times, these opens arise from a base which is given a priori on the topology of $X$.  The homeomorphic copies (in $\widetilde{X}$) of an evenly covered neighborhood $U$ are  the \textit{sheets}, or \textit{leaves}, over $U$,  see again \cite[Chapters 17, 19, 20, 21]{kos} (or  \cite[Chapters I, II]{ttd} and \cite[Appendix 2]{hofmor}).

 \begin{rem}\label{crucialanalogy} We note that Definition \ref{kfold} is exactly the natural replacement of a covering map, when $\widetilde{X}$ and $X$ are topological spaces. The presence of topologies on $X$ and $\widetilde{X}$ making continuous the surjection $p : \widetilde{X} \to X$ and endowing $X$ of the quotient topology is a significant aspect of the  present discussions. When we move in the category of graphs, we deal with Definition   \ref{morphism} in terms of morphisms. Therefore we search for surjective maps with respect to those in Definition \ref{morphism} and look at fibers in the category of graphs in the sense of Definition \ref{kfold}. Concerning topological spaces, the continuous maps are the corresponding morphisms of the category of topological spaces, hence this is generally different from a morphism in the category of graphs. Note that different morphisms may be present on a same set and these give rise to different structures, see \cite[Appendix 3]{hofmor}.
 \end{rem}
It is instructive to give some examples, which  deal with  algebraic topology. For interested readers it is useful to mention the classical construction of existence of liftings and monodromy theorems for general topological spaces are available in  \cite[Appendix 2]{hofmor}, \cite{ttd},  \cite[Theorem 17.6, Corollary 17.7]{kos}. 
\begin{example}[See \cite{kos}, Exercise 17.9 (c) and (d)]\label{pcv} An example of covering map is given for instance by the power function on the comples numbers without the zero $ z \in \mathbb{C}^* \mapsto p(z)=z^n \in \mathbb{C}^*.$ Here we could think at  $\mathbb{C}^* =\mathbb{R}^2 \setminus \{(0,0)\}$ as the plane with a hole in the origin and with the topology induced  by  the usual euclidean plane $\mathbb{R}^2$. Note also that if we have $p: \widetilde{x}\in \widetilde{X} \mapsto p(\widetilde{x}) \in X$ and $q: \widetilde{y} \in \widetilde{Y} \mapsto q(\widetilde{y}) \in Y$  covering maps, then \begin{equation}p \times q: (\widetilde{x}, \widetilde{y}) \in \widetilde{X} \times \widetilde{Y} \mapsto p \times q  \ (\widetilde{x}, \widetilde{y}) = (p(\widetilde{x}), q(\widetilde{y})) \in X \times Y\end{equation} is a covering map as well. In fact  $\widetilde{X} \times \widetilde{Y} $ is the so-called \textit{product covering space} with factors $\widetilde{X}$ and  $\widetilde{Y}$.  In particular,  if $X=Y$, then $\widetilde{X}=\widetilde{Y}$ and we may consider
\begin{equation}\widetilde{W}=\{(\widetilde{x},\widetilde{y}) \in \widetilde{X} \times \widetilde{X}: p(\widetilde{x})=q(\widetilde{y})\},\end{equation}
noting that also $ (\widetilde{x},\widetilde{y}) \in \widetilde{W} \mapsto p(\widetilde{x}) \in X$ is a covering map. In other words, covering maps can be constructed quite easily on  product covering spaces just via equalizers of $p$ and $q$. In fact we have in this situation  the cartesian product $\widetilde{X} \times \widetilde{X} $ of two copies of the covering space $\widetilde{X}$ with  \textit{equalizer} $\widetilde{W}$, see \cite[Definition A3.43 (ii)]{hofmor} for a more general notion of equalizer in  category theory.
\end{example}
Given a $k$-fold covering $(\widetilde{\Gamma}, p, \Gamma)$, it is possible to identify the vertices of $\widetilde{\Gamma}$ with the elements of $V_\Gamma \times \{1, 2, \dots, k\}$, hence the vertices in  $p^{-1}(v)$ become
\begin{equation}V_{p^{-1}(v)} =\{ (v,i)  \mid i \in \{1, \dots, k\}\}. \end{equation} Therefore for $v,  w \in V_\Gamma$ which are connected in $\Gamma$, it is possible to define the function \begin{equation} \psi_{vw}: p^{-1}(v) \longrightarrow p^{-1}(w) \ \ \mbox{such that}
 \ \     \psi_{vw}(v,i) = l_{v,(v,i)}(w) = (w, \sigma_{vw}(i)) 
\end{equation}   
where $\sigma_{vw}$ is a permutation on the set $\{1,2, \ldots, k\}$. It is folklore  that the set of all permutations $S_k$ on  $\{1,2, \ldots, k\}$ possesses the structure of nonabelian group with respect to the composition of permutations (whenever $k \ge 3$).  If $\vec{E}_\Gamma$ denotes the set of  oriented edges of $\Gamma$, i.e.: 
\begin{equation}
    \vec{E}_\Gamma = \{(v,w) \in V_{\Gamma} \times V_{\Gamma} \mid \{v,w\} \in E_{\Gamma}\},
\end{equation} 
then we may introduce the \textit{voltage function}\begin{equation}
     \sigma \ : \  (v,w) \in \vec{E}_\Gamma \longmapsto \sigma(v,w) = \sigma_{vw} \in S_k,    
\end{equation}
according to the notions which were introduced by Cvetkovi\'{c} and others in \cite{neps1, crs, neps2}. See also \cite[Chapters 2, 5]{gross2} for classical results on the voltage graphs in connection with covering spaces.

Fix an order now on  $V_\Gamma$ and  $\phi \in S_k$.  A generalized version of adjacency matrix  can be formalized here by
\begin{equation}
A_\Gamma(\phi)= \begin{cases}  1, \ \  \mbox{if}  \ \ v \sim w \ \ \mbox{and} \ \ \sigma(v,w) = \phi\\ 
\\
0, \ \ \ \mbox{otherwise}.
\end{cases}
\end{equation} On the other hand, denoting by $\mathrm{perm}(\phi)$ the permutation matrix of $\phi$, we have that the adjacency matrix of $\widetilde{\Gamma}$ is given by
\begin{equation}\label{agammaextra}
    A_{\widetilde{\Gamma}} = \sum\limits_{\phi \in S_k} A_{\Gamma}(\phi) \otimes \mathrm{perm}(\phi).
\end{equation}

\begin{problem}Differently  from what we have seen in  \eqref{agammaspectrum1}  and \eqref{agammaspectrum2} concerning Cartesian products of graphs and strong products of graphs, it is an open problem to find combinatorial formulas which describe $\mathrm{spec}(\widetilde{\Gamma})$ in terms of $\mathrm{spec}(\Gamma)$ when $\widetilde{\Gamma}$, $\Gamma$ and $p$ are arbitrary.
\end{problem}

\subsection{Graph bundles} 
We have already noted that the topological notion of covering can be adapted quite well from the context of the algebraic topology to that of graphs. Then we may go ahead and present the analogue of the notion of fiber bundle, see \cite[Chapter I, \S 5, \S 8, \S 9]{ttd}.  
In order to better understand the notion of graph bundle, let us recall the notion of \textit{fiber bundle}. Given a topological space, an $F$-fiber bundle is a triple $(E,p,B)$ where $E$ and $B$ are topological spaces, $p: E \longrightarrow B$ is a surjection, and for every $b \in B$ there exists a neighborhood $N(b)$ of $b$ and a homeomorphism $f_b: p^{-1}(N(b)) \longrightarrow N(b) \times F$ such that $f_b(p^{-1}(x)) = \{x\} \times F$ for every $x \in N(b)$. The map $f_b$ is called \textit{local trivialization} of the fiber bundle, and its presence ensures the $E$ can be understood as a topological space which locally is the product of the base with the fiber. A classical example of fiber bundle is the M\"obius strip. This is a $[0,1]$-fiber bundle over the circle $S^1$ and, indeed, given an arch $A$ strictly contained in $S^1$, then the restriction of the M\"obius strip to the preimage of $A$ is homeomorphic to $A \times [0,1]$.
\\
\\Let's see what happens when we reformulate the above notions for graphs. We start with a graph $X= (V_X, E_X)$ and a surjective morphism $p: X \longrightarrow \Gamma$ and it is possible to define two graphs which have the same vertex set $V_X$: the first  may be denoted by $R$ and is the graph $(V_{X}, E_R)$ with edges  \begin{equation}E_R = \{ \ \{v,w\} \in E_X \ \mid  p(v) = p(w) \ \ \mbox{for} \  \ v,w \in V_X \}.\end{equation} The second is the graph $\widetilde{X} = (V_{X}, E_{\widetilde{X}})$ with edges \begin{equation} E_{\widetilde{X}} = E_X \setminus E_R.
\end{equation} By default  we get an induced map
\begin{equation} 
\widetilde{p}  \ :  \ \widetilde{X} \longrightarrow \Gamma  \ \ \mbox{such that} \ \ \widetilde{p}(x) = p(x) 
\end{equation} 
which is defined as morphism of graphs by  for each  $x \in V_{\widetilde{X}}$.

\begin{defn}[See \cite{Pisanski}, p.14]\label{graphbundle}
Let $F$ be a graph. An \textbf{$F$-graph bundle} is a triple $(X, p, \Gamma)$, where $X$ and $\Gamma$ are two graphs and $p: X \longrightarrow \Gamma$ is a surjective morphism of graphs such that:
\begin{enumerate}
    \item[{\rm (1).}] For each $v$ in $\Gamma$ the fiber $p^{-1}(v)$ is isomorphic to $F$,
    \item[{\rm (2).}] $\widetilde{p}: \widetilde{X} \longrightarrow \Gamma$ is a $\vert V_F \vert$-fold covering,
    \item[{\rm (3).}] For each $v$ and $w$, the bijection $\psi_{vw}: p^{-1}(v) \longrightarrow p^{-1}(w)$ induced by the covering $(\widetilde{X}, \widetilde{p}, \Gamma)$ is an isomorphism of graphs.
\end{enumerate}
Given a $F$-graph bundle $(X, p, \Gamma)$, the graph $X$ is called the \textbf{total space} of the bundle, $\Gamma$   \textbf{base space}, $F$  \textbf{fiber} and $p$  \textbf{projection} of the bundle. 
\end{defn}
Analogies and differences with the algebraic topology which we have just recalled from \cite{ttd, hofmor, kos} become more clear and transparent after Remark \ref{local.trivia}.

\begin{example}\label{exem.bundle}
Given two graphs $\Gamma_1$ and $\Gamma_2$, the triple $(\Gamma_1  \ \square  \ \Gamma_2, p, \Gamma_i)$ is a $\Gamma_j$-graph bundle, where $i,j = 1,2$, $j \neq i$ and $p(x_1, x_2) = x_i$. So the graph $K_2 \ \square  \  K_3$ in Fig. \ref{products} is also an example of graph bundle over $K_3$, according to Definition \ref{graphbundle}.

Moreover  a $k$-fold covering $(\widetilde{\Gamma}, p, \Gamma)$ can be seen as a graph bundle: in particular it is a graph bundle whose fiber is a graph with $k$ vertices and no edges, so we have additional examples for Definition \ref{graphbundle}. Finally we mention further examples of graph bundles such as the total space of a $K_2$-graph bundle over $C_3 = K_3$; this is the M\"obius ladder $M_3$ which is depicted at the bottom right of Fig. \ref{products}. Indeed, let $q : M_3 \longrightarrow C_3$ be the projection $q(x)=[x]+1$, where $[x]$ the remainder of the division of $x$ by $3$. The triple $(M_3, q, C_3)$ is a graph bundle.
\end{example}

\begin{rem}\label{local.trivia}
In the literature (see \cite{banic}) an equivalent definition of Definition \ref{graphbundle} is given: this is identical to the definition given in this manuscript, but points 2 and 3 are replaced by the condition that for all $v, w \in V_\Gamma$ which are connected, one has also that
\begin{equation}\label{local-trivializ}
   p^{-1}(vw) \cong K_2 \ \square  \  F, 
\end{equation}
where $p^{-1}(vw)$ is the subgraph of $X$ induced by the preimages of $v$ and $w$. This alternative definition highlights the local structure of Cartesian product in a graph bundle and it clearly recalls the local trivializations in the definition of a fiber bundle. In fact, thanks to  \eqref{local-trivializ}, it is easy to show that for each vertex $x$ in $\Gamma$ the graph $p^{-1}(N(x))$, which is the subgraph induced by the preimage of $N(x)$, is isomorphic to $N(x) \square F$ via a map $\phi_b$ which preserves the fibers of $p$ and of $\mathrm{pr} : N(x) \square F \longrightarrow N(x)$ where $\mathrm{pr}(y,w) = y$ is the projection on the first component. Definition \ref{graphbundle} is sometimes named as a notion of  \textit{Cartesian graph bundle}, in opposition with some other type of graph bundles which may be modeled on different type of products. For example there are the notions of \textit{strong graph bundle} in \cite{larrion}, \textit{tensor graph bundles} \cite{tensore}, lexicographic graph bundles \cite{lessico}, and so on, just by replacing the graph product in Equality \ref{local-trivializ}.
\end{rem}
We now introduce an equivalence relation in the space of bundles with the same fiber $F$ and same base $\Gamma$.
\begin{defn}\label{equiv.bundle}
Two $F$-graph bundles $(X, p, \Gamma)$ and $(Y, q, \Gamma)$ are \textbf{equivalent} if there is an isomorphism $\Phi: X \longrightarrow Y$ such that the following diagram commute
\begin{equation}
\adjustbox{scale=1.5, center}{
\begin{tikzcd}
X \arrow{r}{\Phi} \arrow[swap]{d}{p} & Y \arrow{d}{q} \\
\Gamma \arrow{r}{\mathrm{id}}& \Gamma.
\end{tikzcd}}
\end{equation}
A graph bundle $(X, p, \Gamma)$ is called \textbf{trivial} if it is equivalent to the graph bundle $(\Gamma  \ \square  \ F, \pi, \Gamma)$, where $\pi(v,f) = v$ for each $(v,f)$ in $V_{\Gamma} \times V_F$.
\end{defn}
\begin{rem}
 Definition \ref{equiv.bundle} can be seen as the combinatorial version of the notion of isomorphism of fiber bundle \cite{ttd}. We prefer to use the word \textit{equivalent} instead of \textit{isomorphic} since it already exists a notion of isomorphism of vector bundle (see \cite{kwak} for example). The definition of isomorphism of graph bundles is essentially the Definition \ref{equiv.bundle} in which the identity morphism can be replaced by a generic automorphism $\psi$ in $\mathrm{Aut}(\Gamma)$. It follows that two equivalent graph bundles are isomorphic, but the vice-versa does not hold.
\end{rem}

\begin{example}\label{exem.trivia.}
Let $C_6 \ \square  \ K_2$ and $M_{6,2}$ be the two graphs in Fig. \ref{equiv.bundles}. These are the total spaces of two $K_2$-graph bundles $(C_6  \ \square \  K_2, r, C_6)$ and $(M_{6,2}, r, C_6)$ where $r(x) = x$ if $x \leq 6$ and $r(x) = x-6$ otherwise. These two graph bundles are equivalent and the required isomorphism $\phi: C_6 \ \square  \  K_2 \longrightarrow M_{6,2}$ is the permutation in $S_{12}$ defined by the product of the three disjoint transpositions \begin{equation}\phi = (3  \ \ 9) \ \ (4 \ \ 10) \ \ (5 \ \ 11).\end{equation}

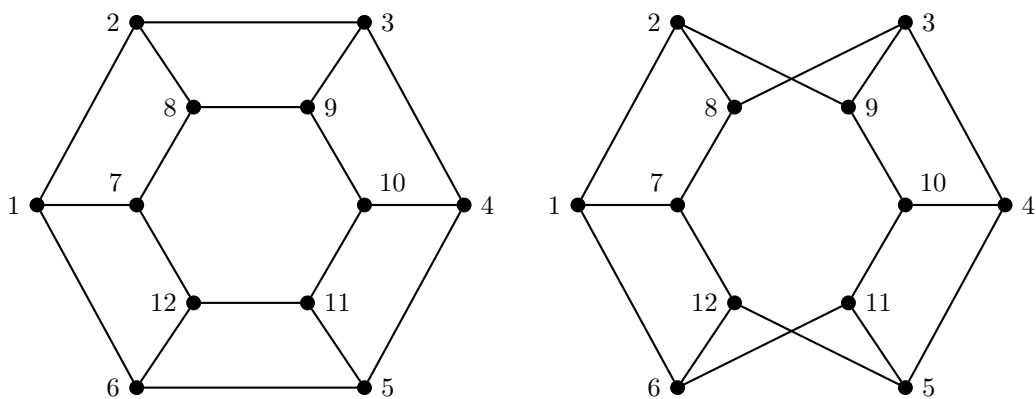
\begin{figure}[h]
\begin{tikzpicture}[scale=0.75]
\node[vertex,fill=black] (1) at  (-5,0)[scale=0.8,label=above left:$7$] {};
\node[vertex,fill=black] (11) at  (-4, 1.732) [scale=0.8,label=left:$8$] {};
\node[vertex,fill=black] (21) at  (-2, 1.732) [scale=0.8,label=right:$9$]{};
\node[vertex,fill=black] (31) at  (-1, 0)[scale=0.8,label=above right:$10$] {};
\node[vertex,fill=black] (41) at  (-2, -1.732)[scale=0.8,label=right:$11$] {};
\node[vertex,fill=black] (51) at  (-4, -1.732)[scale=0.8,label=left:$12$] {};
\node[vertex,fill=black] (61) at  (-6.75,0)[scale=0.8,label=left:$1$] {};
\node[vertex,fill=black] (71) at  (-5,3.232)[scale=0.8,label=left:$2$] {};
\node[vertex,fill=black] (81) at  (-1, 3.232) [scale=0.8,label=right:$3$]{};
\node[vertex,fill=black] (91) at  (0.75, 0)[scale=0.8,label=right:$4$] {};
\node[vertex,fill=black] (101) at  (-1, -3.232)[scale=0.8,label=right:$5$] {};
\node[vertex,fill=black] (111) at  (-5, -3.232)[scale=0.8,label=left:$6$] {};

\node[vertex,fill=black] (2) at  (4.5,0)[scale=0.8,label=above left:$7$] {};
\node[vertex,fill=black] (22) at  (5.5, 1.732) [scale=0.8,label=left:$8$] {};
\node[vertex,fill=black] (23) at  (7.5, 1.732) [scale=0.8,label=right:$9$]{};
\node[vertex,fill=black] (24) at  (8.5, 0)[scale=0.8,label=above right:$10$] {};
\node[vertex,fill=black] (25) at  (7.5, -1.732)[scale=0.8,label=right:$11$] {};
\node[vertex,fill=black] (26) at  (5.5, -1.732)[scale=0.8,label=left:$12$] {};
\node[vertex,fill=black] (27) at  (2.75,0)[scale=0.8,label=left:$1$] {};
\node[vertex,fill=black] (28) at  (4.5, 3.232) [scale=0.8,label=left:$2$] {};
\node[vertex,fill=black] (29) at  (8.5, 3.232) [scale=0.8,label=right:$3$]{};
\node[vertex,fill=black] (210) at  (10.25, 0)[scale=0.8,label=right:$4$] {};
\node[vertex,fill=black] (211) at  (8.5, -3.232)[scale=0.8,label=right:$5$] {};
\node[vertex,fill=black] (212) at  (4.5, -3.232)[scale=0.8,label=left:$6$] {};

\draw[edge, thick]  (1) --  (11) node[midway, above right]  {};
\draw[edge, thick]  (11) --  (21) node[midway, below right] {};
\draw[edge,thick]  (21) --  (31) node[midway, above left] {};
\draw[edge, thick]  (31) --  (41) node[midway, above left] {};
\draw[edge, thick]  (41) --  (51) node[midway, above left] {};
\draw[edge, thick]  (51) --  (1) node[midway, above left] {};
\draw[edge, thick]  (71) --  (81) node[midway, above right]  {};
\draw[edge, thick]  (81) --  (91) node[midway, below right] {};
\draw[edge,thick]  (91) --  (101) node[midway, above left] {};
\draw[edge, thick]  (101) --  (111) node[midway, above left] {};
\draw[edge, thick]  (61) --  (71) node[midway, above left] {};
\draw[edge, thick]  (111) --  (61) node[midway, above left] {};
\draw[edge, thick]  (1) --  (61) node[midway, above right]  {};
\draw[edge, thick]  (11) --  (71) node[midway, above right]  {};
\draw[edge, thick]  (21) --  (81) node[midway, above right]  {};
\draw[edge, thick]  (31) --  (91) node[midway, above right]  {};
\draw[edge, thick]  (41) --  (101) node[midway, above right]  {};
\draw[edge, thick]  (51) --  (111) node[midway, above right]  {};

\draw[edge, thick]  (2) --  (22) node[midway, above right]  {};
\draw[edge, thick]  (28) --  (23) node[midway, below right] {};
\draw[edge,thick]  (23) --  (24) node[midway, above left] {};
\draw[edge, thick]  (24) --  (25) node[midway, above left] {};
\draw[edge, thick]  (25) --  (212) node[midway, above left] {};
\draw[edge, thick]  (26) --  (2) node[midway, above left] {};
\draw[edge, thick]  (27) --  (28) node[midway, above right]  {};
\draw[edge, thick]  (22) --  (29) node[midway, below right] {};
\draw[edge,thick]  (29) --  (210) node[midway, above left] {};
\draw[edge, thick]  (210) --  (211) node[midway, above left] {};
\draw[edge, thick]  (211) --  (26) node[midway, above left] {};
\draw[edge, thick]  (212) --  (27) node[midway, above left] {};
\draw[edge, thick]  (2) --  (27) node[midway, above left] {};
\draw[edge, thick]  (22) --  (28) node[midway, above left] {};
\draw[edge, thick]  (23) --  (29) node[midway, above right]  {};
\draw[edge, thick]  (24) --  (210) node[midway, above right]  {};
\draw[edge, thick]  (25) --  (211) node[midway, above right]  {};
\draw[edge, thick]  (26) --  (212) node[midway, above right]  {};
\end{tikzpicture}\caption{The total spaces $C_6 \ \square  \ K_2$ and $M_{6,2}$ of two equivalent graph $K_2$-bundles over $C_6$.}\label{equiv.bundles}
\end{figure}
\end{example}
\begin{rem}\label{remtree}
 From \cite[Corollary 4]{kwak} we note that each graph bundle $(X, p, T)$ is trivial if $T$ is a tree. Recall that a \textit{tree} is a graph such that for each couple of vertices $v$ and $w$ there is a unique ordered list of distinct vertices $vv_1\dots v_n w$ such that two consecutive vertices in the list are adjacent. On the other hand, there are examples of trivial graph bundles with base different from a tree.
\end{rem}
It is possible to understand the notion of graph bundle in terms of the so-called $F$-voltage functions \cite{Pisanski}. In the next remark, we will introduce this alternative interpretation.

\begin{rem}\label{exem0}
Given two graphs $\Gamma$ and $F$, an $F$\textit{-voltage function} $\phi$ on $\Gamma$ is a voltage function $\phi: \vec{E}_\Gamma \longrightarrow \mathrm{Aut}(F)$ such that $\phi(v,w) = \phi_{vw}$ and $\phi_{wv}=\phi_{vw}^{-1}$ for each oriented edge $(v,w)$. If there is an order on the vertices of $F$, a $F$-voltage map can be seen as a voltage map with values on $\mathrm{Aut}(F) \subseteq S_{\vert V_F \vert}$.
As proved in \cite{Pisanski}, it is possible to construct an $F$-graph bundle with $\Gamma$ as base space in the following way:  first we consider  $\Gamma \times_\phi F$ as the graph whose vertex set is $V_\Gamma \times V_F$ and $(v_1, f_1) \sim (v_2, f_2)$ if and only if $v_1 \sim v_2$ and $f_2 = \phi(v_1, v_2)f_1$ or $v_1 = v_2$ and $f_1 \sim f_2$. Then we
 denote by 
 \begin{equation}\label{fiberproduct1} p: \Gamma \times_{\phi} F \longrightarrow \Gamma  \ \ \mbox{the morphism of graphs such that} \   \  p(v,f) = v  \  \ \mbox{for each} \  (v,f) \in V_\Gamma \times V_F.
 \end{equation} 
 It is easy to check that $p$ is a surjective morphism of graphs and that the triple \begin{equation}\label{fiberproduct2}
 (\Gamma \times_\phi F, p, \Gamma) \ \  \mbox{is an}  \ \ F\mbox{-graph bundle},\end{equation} 
 so Definition \ref{graphbundle} is satisfied.

 It should be also mentioned that it is possible to deduce from Pisanski and others \cite{kwak, Pisanski}  that every $F$-graph bundle $(X, p, \Gamma)$ is equivalent (as per Definition \ref{equiv.bundle} ) to $(\Gamma \times_\phi F, p, \Gamma)$ for some $F$-voltage functions $\phi$. Indeed, fixed for each $v$ in $\Gamma$ an isomorphism $\sigma_v: p^{-1}(v) \longrightarrow F$, then it is sufficient to set $\phi_{vw} = \sigma_w \circ \psi_{vw} \circ \sigma_v^{-1}$ and we obtain an $F$-graph bundle which is equivalent to $(X, p, \Gamma)$. This identification will be useful in order to study the adjacency matrix of the total space of a graph bundle.
\end{rem}

We end with some properties on the adjacency matrix of graph bundles.  As usual we are considering $(X, p, \Gamma)$ as an  $F$-graph bundle and  $(\Gamma \times_{\phi} F, q, \Gamma)$  graph bundle which is equivalent to $(X, p, \Gamma)$. The adjacency matrix $A_{\Gamma \times_{\phi} F}$ of $\Gamma \times_{\phi} F$ may be  described very well in this situation. Since $X$ and $\Gamma \times_{\phi} F$ are isomorphic,  $A_{\Gamma \times_{\phi} F}$ and $A_X$ are the same matrices, up to choose the correct orders of the vertices. Of course, their spectra are equal. This implies that, for the purpose of finding spectral properties of graph bundles, one can consider without loss of generality only graph bundles in the form $\Gamma \times_{\phi} F$.

\begin{lem}[See \cite{kwakk}, Theorem 1]\label{th.kwakk}
Let $(\Gamma \times_{\phi} F, q, \Gamma)$ be an $F$-graph bundle. For each $\psi$ in $\mathrm{Aut}(F)$, denote by $\mathrm{perm}(\psi)$ the permutation matrix of $\psi$. Then the adjacency matrix of $\Gamma \times_{\phi} F$ is given by the formula
\begin{equation}\label{agammasuperextra}
A_{\Gamma \times_{\phi} F} = \sum\limits_{\psi \in \mathrm{Aut}(F)} A_{\Gamma}(\psi) \otimes \mathrm{perm}(\psi) + I_{\vert V_{\Gamma} \vert} \otimes A_{F},
\end{equation}
where $A_{\Gamma}(\psi)$ is the matrix whose entries are $1$ if $\phi_{ij}=\psi$ and $0$ otherwise.
\end{lem}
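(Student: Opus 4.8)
The plan is to compute $A_{\Gamma \times_{\phi} F}$ block by block after fixing a convenient order on the vertex set $V_\Gamma \times V_F$, and then to recognize the resulting block pattern as the right-hand side of \eqref{agammasuperextra}. First I would fix an order $v_1, \dots, v_n$ on $V_\Gamma$ (with $n = \vert V_\Gamma \vert$) and an order $f_1, \dots, f_m$ on $V_F$ (with $m = \vert V_F \vert$), and endow $V_\Gamma \times V_F$ with the induced lexicographic order, exactly as in the discussion preceding \eqref{agammaproduct1}. With respect to this order $A_{\Gamma \times_\phi F}$ becomes an $n \times n$ block matrix whose blocks have size $m \times m$; I will write $B_{ij}$ for the $(i,j)$-block, whose $(r,s)$-entry records whether $(v_i, f_r) \sim (v_j, f_s)$ in $\Gamma \times_\phi F$.

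Next I would read off each block directly from the adjacency rule defining $\Gamma \times_\phi F$ in Remark \ref{exem0}. There are two cases. If $i = j$, the rule forces $v_i = v_j$, so $(v_i, f_r) \sim (v_i, f_s)$ exactly when $f_r \sim f_s$ in $F$; hence every diagonal block equals $A_F$. If $i \neq j$, adjacency requires $v_i \sim v_j$ in $\Gamma$ together with $f_s = \phi_{ij}(f_r)$, so $B_{ij} = \mathrm{perm}(\phi_{ij})$ when $v_i \sim v_j$ and $B_{ij} = 0$ otherwise. Here I adopt the convention that $\mathrm{perm}(\psi)$ carries a $1$ in position $(r,s)$ precisely when $f_s = \psi(f_r)$.

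It then remains to match this block pattern with the claimed formula. The term $I_{\vert V_\Gamma \vert} \otimes A_F$ contributes $A_F$ on each diagonal block and $0$ off the diagonal, accounting exactly for the case $i = j$; note that $B_{ii}$ receives no contribution from the first sum, since graphs carry no loops and hence $\big(A_\Gamma(\psi)\big)_{ii} = 0$ for every $\psi$. For the off-diagonal blocks I would invoke the definition of the Kronecker product: the $(i,j)$-block of $A_\Gamma(\psi) \otimes \mathrm{perm}(\psi)$ is $\big(A_\Gamma(\psi)\big)_{ij}\,\mathrm{perm}(\psi)$, so summing over $\psi \in \mathrm{Aut}(F)$ yields $\sum_{\psi} \big(A_\Gamma(\psi)\big)_{ij}\,\mathrm{perm}(\psi)$. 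Since for a fixed adjacent ordered pair $(v_i, v_j)$ the voltage $\phi_{ij}$ is a single, well-defined element of $\mathrm{Aut}(F)$, exactly one summand survives, the one with $\psi = \phi_{ij}$, returning $\mathrm{perm}(\phi_{ij})$; and if $v_i \not\sim v_j$ every $\big(A_\Gamma(\psi)\big)_{ij}$ vanishes, so the block is $0$. This reproduces the off-diagonal blocks $B_{ij}$ computed above, completing the identification.

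The computation itself is routine; the one place demanding care, and the step I expect to be the main (admittedly mild) obstacle, is the bookkeeping of conventions, namely aligning the permutation-matrix convention with the direction of the relation $f_s = \phi_{ij}(f_r)$ and checking that the outcome is genuinely symmetric. Symmetry is reassured by the voltage identity $\phi_{ji} = \phi_{ij}^{-1}$ from Remark \ref{exem0}, which gives $B_{ji} = \mathrm{perm}(\phi_{ij}^{-1}) = \mathrm{perm}(\phi_{ij})^{T} = B_{ij}^{T}$, consistent with $A_{\Gamma \times_\phi F}$ being the adjacency matrix of an undirected graph.
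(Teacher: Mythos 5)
Your proof is correct, but there is nothing in the paper to compare it against: the paper does not prove this lemma at all, importing it verbatim as Theorem 1 of \cite{kwakk} (Kwak--Kwon). So you have supplied a self-contained argument where the authors rely on the literature, and your block computation is exactly the expected one. It is also fully consistent with the paper's conventions: the Kronecker product is defined blockwise just as you use it, the lexicographic order is the one adopted before \eqref{agammaproduct1}, and since the paper's graphs carry no loops (edges require $x\neq y$), your observation that $\bigl(A_\Gamma(\psi)\bigr)_{ii}=0$ is what guarantees the diagonal blocks come solely from $I_{\vert V_\Gamma\vert}\otimes A_F$. Two small points worth making explicit if this were written up: first, the statement's description of $A_\Gamma(\psi)$ should be read as ``$1$ if $v_i\sim v_j$ \emph{and} $\phi_{ij}=\psi$'', since the voltage $\phi$ is only defined on oriented edges --- your proof silently uses this correct reading when you let every $\bigl(A_\Gamma(\psi)\bigr)_{ij}$ vanish for non-adjacent pairs; second, your closing symmetry check via $\phi_{ji}=\phi_{ij}^{-1}$ from Remark \ref{exem0} is genuinely needed, not just reassuring, because without it the right-hand side of \eqref{agammasuperextra} would not be a symmetric matrix and could not be the adjacency matrix of an undirected graph.
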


It is useful to compare \eqref{agammaproduct1}, \eqref{agammaproduct2} and \eqref{agammaextra} with  \eqref{agammasuperextra}, since one can find that (under appropriate assumptions) \eqref{agammasuperextra} works well as generalization of
\eqref{agammaproduct1}, \eqref{agammaproduct2} and \eqref{agammaextra}.

\section{Pullback of graph bundles}\label{section3}

The notion of pullback originates from algebraic topology and differential geometry in a similar way as products of objects in a category, so it is possible to introduce (for arbitrary categories with products and coproducts, see \cite[Appendix 3]{hofmor} for terminology) the notion of pullback, see \cite[Definition A3.43]{hofmor}.
We begin with two graphs $\Gamma$ and $\Gamma'$ along with  an $F$-graph bundle  $(X, p, \Gamma')$. Fixing a morphism of graphs $f: \Gamma \longrightarrow \Gamma'$,  we may introduce an $F$-graph bundle over $\Gamma$ induced by $(X, p, \Gamma')$ and $f$.

\begin{defn}\label{pullback}
The \textbf{pullback along } $f$ \textbf{of } $(X, p, \Gamma')$ is the $F$-graph bundle $(f^*X, f^*p, \Gamma)$ where $f^*X$ is the graph whose vertex set is 
\begin{equation*}
    V_{f^* X} = \{(v, x) \in V_{\Gamma} \times V_{X} \mid f(v) = p(x)\}
\end{equation*}
and $(v,x) \sim (w, y)$ if and only if one of the following three conditions hold
\begin{itemize}
    \item[\rm (1).] $v = w$, and $x\sim y$ (type I edges),
    \item[\rm (2).] $v\sim w$, $f(v) = f(w)$ and $x = y$ (type II edges),
    \item[\rm (3).] $v\sim w$, $f(v) \sim f(w)$ and $x \sim y$ (type III edges).
\end{itemize}
Finally $f^*p: f^*X \longrightarrow \Gamma$ is the morphism such that for each vertex $(v,x)$ in $f^*X$ is defined as $f^*p(v,x) = v$. We denote this graph bundle by $f^*(X, p, \Gamma')$.
\end{defn}

The following example shows a concrete situation where we can visualize a pullback bundle.

\begin{example}
Let $p: C_6 \longrightarrow C_3$ be the map introduced in Example \ref{exem.cover}, if $(M_3, q, C_3)$ is the $K_2$-graph bundle over $C_3$ in Example \ref{exem.bundle}, its pullback bundle along the map $p$ turns out to be the graph bundle $(M_{6,2}, r, C_6)$ of Example \ref{exem.trivia.} and whose total space is depicted in the right of Fig. \ref{equiv.bundles}. The  expression of the surjective homomorphism of graphs $q$ is clear, so we omit the details.
\end{example}

We must now check that Definition \ref{pullback} is appropriate and meaningful.

\begin{prop}
Definition \ref{pullback} is well-posed.
\end{prop}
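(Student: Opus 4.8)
The plan is to show that the triple $(f^*X, f^*p, \Gamma)$ built in Definition \ref{pullback} genuinely satisfies Definition \ref{graphbundle}; this is exactly what ``well-posed'' should mean. First I would dispatch the elementary consistency checks. The relation given by the three edge types is symmetric (each of the three defining conditions is symmetric in its two endpoints) and loop-free (type I forces $v=w$ but $x\sim y$, hence distinct endpoints; types II and III force $v\sim w$, again distinct endpoints), so $f^*X$ is a bona fide graph. The three types are moreover pairwise mutually exclusive: type I requires $v=w$ whereas II and III require $v\sim w$, and since $\Gamma'$ has no loops the conditions $f(v)=f(w)$ and $f(v)\sim f(w)$ cannot hold together. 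Consequently $f^*p(v,x)=v$ sends adjacent vertices of $f^*X$ to equal or adjacent vertices of $\Gamma$, so it is a morphism of graphs, and it is surjective because each set $p^{-1}(f(v))$ is a copy of $F$ and hence nonempty.

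Next I would verify condition (1) of Definition \ref{graphbundle}. The fiber $(f^*p)^{-1}(v)$ has vertex set $\{v\}\times p^{-1}(f(v))$, and its internal edges are precisely the type I edges, i.e. $(v,x)\sim(v,y)$ exactly when $x\sim y$ in $X$. Since $x,y$ then lie in $p^{-1}(f(v))$, the projection $(v,x)\mapsto x$ is a graph isomorphism from $(f^*p)^{-1}(v)$ onto the fiber $p^{-1}(f(v))$, which is isomorphic to $F$ because $(X,p,\Gamma')$ is an $F$-graph bundle.

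The core of the argument, and the step I expect to require the most care, is the local triviality condition of Remark \ref{local.trivia}: for each adjacent pair $v\sim w$ in $\Gamma$, the subgraph of $f^*X$ induced on $(f^*p)^{-1}(v)\cup(f^*p)^{-1}(w)$ must be isomorphic to $K_2\,\square\,F$. Since $f$ is a morphism, exactly one of $f(v)=f(w)$ or $f(v)\sim f(w)$ holds. If $f(v)=f(w)$, the cross-fiber edges are the type II edges $(v,x)\sim(w,x)$, giving the identity matching between two copies of $F$ sitting over the common $p$-fiber $p^{-1}(f(v))$; this is visibly $K_2\,\square\,F$. If instead $f(v)\sim f(w)$, the cross-fiber edges are the type III edges $(v,x)\sim(w,y)$ with $x\sim y$ in $X$; here $p(x)=f(v)\neq f(w)=p(y)$, so these are precisely the edges of $\widetilde{X}$ running between $p^{-1}(f(v))$ and $p^{-1}(f(w))$. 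By the covering structure of $(\widetilde{X},\widetilde{p},\Gamma')$ such edges are governed by the bijection $\psi_{f(v)f(w)}$, and condition (3) of Definition \ref{graphbundle} applied to $(X,p,\Gamma')$ guarantees that $\psi_{f(v)f(w)}$ is an \emph{isomorphism} of graphs. Relabeling the fiber over $w$ through $\psi_{f(v)f(w)}^{-1}$ turns this matching into the identity matching while preserving the $F$-structure, exhibiting the induced subgraph as $K_2\,\square\,F$.

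Having confirmed condition (1) together with the local triviality equivalent to conditions (2)--(3), I would invoke Remark \ref{local.trivia} to conclude that $(f^*X,f^*p,\Gamma)$ is an $F$-graph bundle, so Definition \ref{pullback} is well-posed. The only genuinely delicate point is that, in the type III case, it is the isomorphism property of $\psi_{f(v)f(w)}$ — and not merely its bijectivity — that upgrades the cross-fiber matching to a Cartesian-product structure; every other step is bookkeeping.
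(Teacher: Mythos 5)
Your proof is correct, but it takes a genuinely different route from the paper's. The paper verifies Definition \ref{graphbundle} directly: it deletes the type I edges, exhibits the liftings explicitly --- $l_{v,(v,x)}(w)=(w,x)$ when $f(v)=f(w)$ and $l_{v,(v,x)}(w)=(w,\psi_{f(v),f(w)}(x))$ when $f(v)\sim f(w)$ --- to show that $\widetilde{f^*p}$ is a $\vert V_F\vert$-fold covering (condition (2)), and then checks that the induced fiber bijections $f^*\psi_{v,w}$ are isomorphisms of graphs (condition (3)) because each $\psi_{f(v),f(w)}$ is. You instead verify condition (1) together with the local-triviality criterion $p^{-1}(vw)\cong K_2 \ \square \ F$ and appeal to Remark \ref{local.trivia} for the equivalence of the two characterizations. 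The mathematical core is the same in both arguments: the dichotomy $f(v)=f(w)$ versus $f(v)\sim f(w)$, forced by $\Gamma'$ being loop-free, and the fact that the cross-fiber matching is governed by the graph isomorphism $\psi_{f(v),f(w)}$, which is what makes your relabeling of the fiber over $w$ by $\psi_{f(v),f(w)}^{-1}$ legitimate. As for what each route buys: yours is conceptually cleaner and supplies bookkeeping the paper silently skips (symmetry and loop-freeness of the pullback adjacency relation, mutual exclusivity of the three edge types, and fiber-compatibility of the trivializing isomorphism); on the other hand, the paper's argument is self-contained relative to Definition \ref{graphbundle}, whereas yours inherits a dependence on Remark \ref{local.trivia}, an equivalence the paper quotes from \cite{banic} but never proves, and the explicit liftings and fiber maps $f^*\psi_{v,w}$ constructed in the paper's proof are precisely the objects reused later (for instance in \eqref{couple} in the proof of Theorem \ref{2ndmaintheorem}), so building them here is not wasted effort.
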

\begin{proof}
Notice that, given $(x,v) \sim (y,w)$ in $f^*X$, then or $x \sim y$ or $x=y$. This implies that $f^*p$ is a surjective morphism of graphs. It remains to check points (1), (2) and (3) of Definition \ref{graphbundle}.
\begin{enumerate}
 
 \item[(1).] The fiber $f^*p^{-1}(v)$ is the graph with vertices on the form $(v,x)$ where $x$ is in the preimage of $f(v)$ and the edges are the edges of type I connecting vertices on the form $(v,x)$ and $(v,y)$ such that $x \sim y$. This is clearly isomorphic to $p^{-1}(f(v))$ and so to $F$;

 \item[(2).] Let $\widetilde{f^*X}$ be obtained from $f^*X$ by deleting the edges of type I. Then $\widetilde{f^*p}: \widetilde{f^*X} \longrightarrow \Gamma$ is a covering of $\Gamma$. Indeed if we fix $(v,x)$ in $\widetilde{f^*X}$, we note that for each $w \sim v$ in $\Gamma$ there exists exactly one vertex $(w, y_x)$ in $\widetilde{f^*X}$ such that $(v,x) \sim (w, y_x)$: 
 \begin{enumerate} \item[(a).]if $f(v) = f(w)$, then $y_x=x$ (edge of type II), \item[(b).] if $f(v) \sim f(w)$, then $y_x=\psi_{f(v), f(w)}(x)$ (edge of type III), where we must consider the map $\psi_{f(v), f(w)}  :   p^{-1}(f(v)) \to p^{-1}(f(w))$ which is the isomorphism in connection with the oriented edge $(f(v), f(w))$ induced by the structure of $F$-graph bundle of $(X,p, \Gamma')$.  
 Denoting  by $N(v)$ and $N(v,x)$  the two neighborhoods of $v$ and $(v,x)$ and by $l_{v,(v,x)}: N(v) \longrightarrow N(v,x)$ be the morphism which is defined for each vertex $w$ in $N(v)$ by $l_{v,(v,x)}(w) = (w, y_x)$, it turns out that $\widetilde{f^*p}_{\vert N(v,x)}^{-1} =l_{v,(v,x)}$.
 \end{enumerate}
 
 \item[(3).] Let $v$ and $w$ be in $V_\Gamma$ such that $v \sim w$. If $f(v) = f(w)$, then the map which is induced by the  covering $\widetilde{f^*p}: \widetilde{f^*X} \longrightarrow \Gamma$ is 
         $f^*\psi_{v, w} \ : \  f^*p^{-1}(v) \to f^*p^{-1}(w)$ such that $(v, x) \mapsto (w, x).$
 On the other hand, if $f(v) \sim f(w)$ the isomorphism between the fibers $f^*p^{-1}(v)$ and $f^*p^{-1}(w)$ is given by       $f^*\psi_{v, w}  \ : \  f^*p^{-1}(v) \to f^*p^{-1}(w)$ such that   $(v, x) \mapsto (w, \psi_{f(v), f(w)}(x)).$ These maps are clearly isomorphisms of graphs since $\psi_{f(v), f(w)}: p^{-1}(f(v)) \longrightarrow p^{-1}(f(w))$ is an isomorphism of graphs.
\end{enumerate}
\end{proof}

\begin{rem}\label{rem0}
We would like to concentrate on the pullback of graph bundle induced by a voltage function. We would like to highlight that this construction preserves the property of being induced by an $F$-voltage function. Let $f: \Gamma \longrightarrow \Gamma'$ be a graph morphism. Assume that $\phi$ is an $F$-voltage function on $\Gamma'$. Then we define the pullback along $f$ of $\phi$ is the $F$-voltage function $f^*\phi: \vec{E}_\Gamma \longrightarrow \mathrm{Aut}(F)$ defined by
\begin{equation}\label{defn0}
    f^*\phi(v,w) = \begin{cases}
        \mathrm{id}_{F} &\mbox{ if } f(v)=f(w)\\
        \\
        \phi(f(v), f(w)) &\mbox{otherwise.}
    \end{cases}
\end{equation}
Let $(\Gamma' \times_\phi F, p, \Gamma')$ be an $F$-graph bundle induced by a voltage function $\psi$. Let $f: \Gamma \longrightarrow \Gamma'$ be a morphism of graphs. Then
\begin{equation}\label{fiberproduct3}
    f^*(\Gamma' \times_\phi F, p, \Gamma')  \cong (\Gamma \times_{f^*\phi} F, q, \Gamma),
\end{equation}
where $q(v, f) = v$ for each $(v,f)$ in $V_\Gamma \times V_F$. The required isomorphism $\Phi: f^*\Gamma' \times_\phi F \longrightarrow \Gamma \times_{f^*\phi} F$ is simply defined as $\Phi(x, f(x), w) = (x,w)$.
\end{rem}

One needs to be sure that the pullbacks preserve  equivalent graph bundles in order to have no ambiguities. Fortunately, this is the case.

\begin{prop}\label{lemma0}
The pullback preserves the equivalence classes. In other words, let $(X, p, \Gamma')$ and $(Y, q, \Gamma')$ be two equivalent $F$-graph bundles and let $f: \Gamma \longrightarrow \Gamma'$ be a morphism of graphs. Then $f^*(X, p, \Gamma')$ and $f^*(Y, q, \Gamma')$ are equivalent.
\end{prop}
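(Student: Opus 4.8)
The plan is to lift the given equivalence $\Phi \colon X \longrightarrow Y$ directly to the pullbacks by acting only on the second coordinate. By hypothesis $\Phi$ is an isomorphism of graphs with $q \circ \Phi = p$. I would define
\[
\widetilde{\Phi} \colon f^*X \longrightarrow f^*Y, \qquad \widetilde{\Phi}(v,x) = (v, \Phi(x)),
\]
and show that this is the required equivalence. First I would check that $\widetilde{\Phi}$ is well defined on vertices: if $(v,x) \in V_{f^*X}$, then $f(v) = p(x)$, and since $q(\Phi(x)) = p(x) = f(v)$ the pair $(v,\Phi(x))$ indeed lies in $V_{f^*Y}$. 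The map $\widetilde{\Phi}$ is a bijection because $\Phi$ is, with inverse $(v,y) \mapsto (v, \Phi^{-1}(y))$; note that $q \circ \Phi = p$ gives $p \circ \Phi^{-1} = q$, so $\Phi^{-1}$ induces the inverse pullback map symmetrically.

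The heart of the argument is that $\widetilde{\Phi}$ preserves the three edge types of Definition \ref{pullback}. Since $\widetilde{\Phi}$ fixes the first coordinate, the relations ``$v = w$'', ``$f(v) = f(w)$'' and ``$f(v) \sim f(w)$'' --- which are exactly what distinguishes the types --- are untouched; only the second coordinate is moved, and there $\Phi$ preserves both adjacency and equality. Concretely, a type I edge ($v = w$, $x \sim y$) maps to $(v,\Phi(x)) \sim (v,\Phi(y))$, again of type I because $\Phi(x) \sim \Phi(y)$; a type II edge ($x = y$) maps to one with $\Phi(x) = \Phi(y)$, of type II; and a type III edge maps to a type III edge for the same reason. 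The identity $q \circ \Phi = p$ guarantees that $\Phi$ carries fibers of $p$ to fibers of $q$, so that fiber edges (type I) are not confused with the base-moving edges. Applying the same reasoning to $\Phi^{-1}$ shows that $\widetilde{\Phi}^{-1}$ also preserves adjacency, hence $\widetilde{\Phi}$ is an isomorphism of graphs.

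Finally I would verify the commuting triangle over $\Gamma$: for every $(v,x)$ one has $f^*q(\widetilde{\Phi}(v,x)) = f^*q(v,\Phi(x)) = v = f^*p(v,x)$, so the diagram of Definition \ref{equiv.bundle} commutes with $\mathrm{id}_\Gamma$, and therefore $f^*(X,p,\Gamma')$ and $f^*(Y,q,\Gamma')$ are equivalent. I do not expect a genuine obstacle here: the only point requiring care is the bookkeeping of the edge-type case analysis, and this collapses to near-triviality once one observes that the first coordinate is fixed and that $q \circ \Phi = p$ matches fibers to fibers.
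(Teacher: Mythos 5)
Your proposal is correct and takes essentially the same route as the paper's own proof: you define the same map $(v,x)\mapsto(v,\Phi(x))$, run the same three-case analysis on edge types I, II, III, and verify the same commuting triangle over $\Gamma$. The only difference is that you additionally spell out well-definedness (that $q\circ\Phi=p$ puts $(v,\Phi(x))$ in $V_{f^*Y}$) and the inverse map, details the paper leaves implicit.
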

\begin{proof}
Denote by $\Phi: X \longrightarrow Y$ be the isomorphism of graph bundles. Then
$f^*\Phi: f^*X \longrightarrow f^*Y$ is the morphism such that, for each $(v, x)$ in $V_{f^*X}$, $f^*\Phi(v,x) = (v, \Phi(x))$. Notice that, if $(v,x) \sim (v,y)$ are edges of type I, then $x\sim y$ and so, since $\Phi$ is an isomorphism, $f^*\Phi(v,x) = (v, \Phi(x)) \sim (v, \Phi(y)) = f^*\Phi(v,y)$.
If $(v,x) \sim (w,x)$ are edges of type II, then $f^*\Phi(v,x) = (v, \Phi(x)) \sim (w, \Phi(x)) = f^*\Phi(w,x)$. Finally if $(v,x) \sim (w,y)$ are edges of type III, then, again since $\Phi$ is an isomorphism, $f^*\Phi(v,x) = (v, \Phi(x)) \sim (w, \Phi(y)) = f^*\Phi(w,x)$. Clearly $f^*q \circ f^*\Phi = f^*p$.
\end{proof}
\begin{cor}\label{pulltrivial}
The pullback of a trivial bundle is trivial.    
\end{cor}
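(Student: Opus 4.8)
The plan is to derive Corollary \ref{pulltrivial} directly from the definition of triviality together with the two propositions already established, namely Proposition \ref{lemma0} (pullback respects equivalence) and Remark \ref{rem0} (pullback of a voltage-induced bundle is again voltage-induced, with an explicit formula for $f^*\phi$). By Definition \ref{equiv.bundle}, a graph bundle $(X, p, \Gamma')$ is trivial precisely when it is equivalent to the product bundle $(\Gamma' \ \square \ F, \pi, \Gamma')$. So I would start by assuming $(X, p, \Gamma')$ is trivial and fix such an equivalence, then apply the pullback along an arbitrary morphism $f: \Gamma \longrightarrow \Gamma'$.

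First I would invoke Proposition \ref{lemma0}: since $(X, p, \Gamma')$ is equivalent to $(\Gamma' \ \square \ F, \pi, \Gamma')$, the pullback $f^*(X, p, \Gamma')$ is equivalent to $f^*(\Gamma' \ \square \ F, \pi, \Gamma')$. This reduces the problem to showing that the pullback of the \emph{product} bundle is itself trivial, i.e.\ equivalent to $(\Gamma \ \square \ F, \pi, \Gamma)$. The key observation is that the product bundle $(\Gamma' \ \square \ F, \pi, \Gamma')$ is exactly the bundle $(\Gamma' \times_{\phi} F, \pi, \Gamma')$ induced by the \emph{trivial} voltage function $\phi(v,w) = \mathrm{id}_F$ for every oriented edge $(v,w)$; indeed, plugging $\phi \equiv \mathrm{id}_F$ into the adjacency rule of Remark \ref{exem0} recovers exactly the Cartesian product adjacency of Definition \ref{cproduct}.

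Next I would apply Remark \ref{rem0}, specifically the isomorphism \eqref{fiberproduct3}, which tells us $f^*(\Gamma' \times_\phi F, \pi, \Gamma') \cong (\Gamma \times_{f^*\phi} F, q, \Gamma)$. The point is to compute $f^*\phi$ for the trivial $\phi$. By formula \eqref{defn0}, when $f(v) = f(w)$ we get $f^*\phi(v,w) = \mathrm{id}_F$, and otherwise we get $f^*\phi(v,w) = \phi(f(v), f(w)) = \mathrm{id}_F$ since $\phi$ is identically $\mathrm{id}_F$. Hence in both cases $f^*\phi(v,w) = \mathrm{id}_F$, so $f^*\phi$ is again the trivial voltage function on $\Gamma$. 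Therefore $(\Gamma \times_{f^*\phi} F, q, \Gamma)$ is the product bundle $(\Gamma \ \square \ F, \pi, \Gamma)$, which is trivial by definition.

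Chaining these equivalences together—$f^*(X, p, \Gamma')$ equivalent to $f^*(\Gamma' \ \square \ F, \pi, \Gamma')$, which is equivalent (indeed isomorphic via $\Phi$) to $(\Gamma \ \square \ F, \pi, \Gamma)$—and using that equivalence of graph bundles is transitive, we conclude that $f^*(X, p, \Gamma')$ is equivalent to the product bundle over $\Gamma$, hence trivial. I expect no serious obstacle here; the only point requiring a line of care is verifying that the trivial voltage function genuinely produces the Cartesian product (so that the product bundle really is a special case of $\Gamma' \times_\phi F$), and confirming that equivalence is transitive so that the chain of equivalences legitimately composes. Both are routine, so the corollary follows almost immediately from the machinery already in place.
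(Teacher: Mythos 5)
Your proposal is correct and follows essentially the same route as the paper's own proof: reduce to the product bundle via Proposition \ref{lemma0}, identify $(\Gamma' \ \square \ F, \pi, \Gamma')$ with the bundle induced by the constant voltage function $\phi \equiv \mathrm{id}_F$, observe via \eqref{defn0} that $f^*\phi \equiv \mathrm{id}_F$, and conclude with Remark \ref{rem0}. Your additional remarks on the two cases in computing $f^*\phi$ and on transitivity of equivalence only make explicit what the paper leaves implicit.
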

\begin{proof}
According to Definition \ref{equiv.bundle}, if $(X, p, \Gamma')$ is trivial, then it is equivalent to $(\Gamma' \ \square  \ F, q, \Gamma')$ where $q(x,v) = x$. From Lemma \ref{lemma0}, it is sufficient to prove that $f^*(\Gamma' \ \square  \ F, q, \Gamma') = (\Gamma \ \square  \ F, r, \Gamma)$, where $r(x,g) = x$ for each $(x,g)$ in $V_{\Gamma \ \square  \ F}$. Note that the bundle $(\Gamma' \ \square  \ F, q, \Gamma') = (\Gamma' \times_\phi F, q, \Gamma')$, where $\phi(x,y) = \mathrm{id}_F$ for each $(x,y)$ in $\vec{E}_{\Gamma'}$, this implies  $f^*\phi(v,w) = \mathrm{id}_{F}$ for each $(v,w)$ in $\vec{E}_{\Gamma}$. Thanks to Remark \ref{rem0} we know that
\begin{equation}
    f^*(\Gamma' \ \square  \  F, q, \Gamma') = f^*(\Gamma' \times_\phi F, q, \Gamma') = (\Gamma \times_{f^*\phi} F, f^* q, \Gamma) =  (\Gamma \ \square  \  F, r, \Gamma).
\end{equation}
\end{proof}

\begin{example}\label{exem.pullb}
The $K_2$-graph bundle $(M_3, q, C_3)$  of Example \ref{exem.bundle} is equivalent to the graph bundle $(C_3 \times_\phi K_2, r, C_3)$ where $\phi(1,2) = \phi(2,3) = \mathrm{id}_{K_2}$ and $\phi(1,3) = \alpha$, where $\alpha$ is the automorphism of $K_2$ which exchanges the vertices. Let $p : C_6 \longrightarrow C_3$ be the projection of the $2$-fold covering graph of Example \ref{exem.cover}. The $K_2$-voltage function $p^*\phi$ is given by
\begin{equation}
    p^*\phi(x,y) = \begin{cases}
        \alpha & \mbox{   if   } (x,y) = ((2  \ 3) , (5 \ 6)) \\
        \\
        \mathrm{id}_{K_2} & \mbox{ otherwise.}
    \end{cases}
\end{equation}
The $K_2$-graph bundle $(C_6 \times_{p^* \phi} K_2, p^* r, C_6)$ is equivalent to the graph bundles of Example \ref{exem.trivia.}.
\end{example}

As it happens in algebraic topology, differential geometry and topological algebra, it is possible to observe  functorial properties of the pullbacks. The next proposition is a reminiscence of some classical behaviours of functorial properties in \cite[Propositions A3.33, A3.35, A3.36, Theorems A3.52, Lemma A3.59, Theorem A3.60]{hofmor}.

\begin{prop}\label{funct}
Given the  graphs $\Gamma_1$, $\Gamma_2$ and $\Gamma_3$ with graph morphisms $f: \Gamma_1 \longrightarrow \Gamma_2$ and $g: \Gamma_2 \longrightarrow \Gamma_3$ and  an $F$-graph bundle $(X, p, \Gamma_3)$, the pullback of graph bundles satisfies the following functorial properties: 
\begin{enumerate}
    \item[{\rm (1).}] $f^*[g^*(X, p, \Gamma_3)] \cong [g \circ f]^*(X, p, \Gamma_3)$,
    \item[{\rm (2).}] $\mathrm{id}^*(X, p, \Gamma_3) \cong (X, p, \Gamma_3)$.
\end{enumerate}
\end{prop}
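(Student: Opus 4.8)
The plan is to treat both parts by the same recipe: exhibit an explicit bijection on vertex sets, check that it intertwines the two projections over the identity of the base (so that we obtain an equivalence in the sense of Definition \ref{equiv.bundle}, hence in particular the isomorphism $\cong$ asserted), and then verify that it carries edges to edges by matching the three edge types of Definition \ref{pullback} on each side. Part (2) is the warm-up. Since $\mathrm{id}^*X$ has vertex set $\{(v,x)\mid v=p(x)\}$, the first coordinate is redundant, and I would take $\Psi(v,x)=x$ with inverse $x\mapsto(p(x),x)$. Compatibility with projections is immediate from $\mathrm{id}^*p(v,x)=v=p(x)$. For the edges, note that $\mathrm{id}^*X$ has no type II edges (they would require $v\sim w$ and $\mathrm{id}(v)=\mathrm{id}(w)$, i.e. $v=w$, which is impossible), while type I and type III edges both reduce to the single condition $x\sim y$; using that $p$ is a graph morphism one sees these account for exactly the edges of $X$, so $\Psi$ is an isomorphism of graph bundles.

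For part (1) I would write out the two total spaces concretely. A vertex of $f^*[g^*X]$ is a pair $(v,(u,x))$ with $u=f(v)$ and $g(u)=p(x)$, hence is forced to have the form $(v,(f(v),x))$ with $(g\circ f)(v)=p(x)$; a vertex of $[g\circ f]^*X$ is a pair $(v,x)$ with $(g\circ f)(v)=p(x)$. The natural bijection is therefore $\Theta\bigl(v,(f(v),x)\bigr)=(v,x)$, and again $[g\circ f]^*p\circ\Theta=f^*[g^*p]$ holds on the nose since both send the vertex to $v$. The core of the argument is the edge verification, which I would organize as a case analysis on the three edge types of the outer pullback along $f$, each time unfolding the definition of adjacency in $g^*X$: a type I edge along $f$ forces $f(v)=f(w)$ and reduces to $x\sim y$ (type I in $[g\circ f]^*X$); a type II edge along $f$ gives $f(v)=f(w)$ and $x=y$, hence $(g\circ f)(v)=(g\circ f)(w)$ (type II); and a type III edge along $f$ has $f(v)\sim f(w)$ together with an adjacency $(f(v),x)\sim(f(w),y)$ in $g^*X$, which itself splits into the subcase $g(f(v))=g(f(w)),\,x=y$ (landing in type II of $[g\circ f]^*X$) and the subcase $g(f(v))\sim g(f(w)),\,x\sim y$ (landing in type III).

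The step I expect to be the real obstacle is the converse inclusion, i.e. showing that every edge of $[g\circ f]^*X$ is hit by $\Theta$. The difficulty is that the condition defining a type II edge there, $(g\circ f)(v)=(g\circ f)(w)$, does \emph{not} determine the relation between $f(v)$ and $f(w)$: it is compatible both with $f(v)=f(w)$ (which $\Theta$ pulls back to a type II edge along $f$) and with $f(v)\sim f(w)$ collapsed by $g$ (which $\Theta$ pulls back to a type III edge along $f$ sitting over a type II edge of $g^*X$). Disentangling this ``mixing'' is exactly where I must invoke that $f$ and $g$ are graph morphisms: from $v\sim w$ one gets $f(v)=f(w)$ or $f(v)\sim f(w)$, and for type III edges of $[g\circ f]^*X$ the hypothesis $(g\circ f)(v)\sim(g\circ f)(w)$ forces $f(v)\ne f(w)$ and hence $f(v)\sim f(w)$. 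Checking that these sub-cases are mutually exclusive and jointly exhaustive is the part that requires genuine care.

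As a cleaner alternative worth recording, one can reduce to the voltage description. By the equivalence noted in Remark \ref{rem0} and the fact that pullback respects equivalence (Proposition \ref{lemma0}), I may assume $X=\Gamma_3\times_\phi F$, and then it suffices to prove the identity of $F$-voltage functions $f^*(g^*\phi)=(g\circ f)^*\phi$ on $\vec E_{\Gamma_1}$. Comparing the two sides through the defining formula \eqref{defn0} and splitting into the cases $(g\circ f)(v)=(g\circ f)(w)$ and $(g\circ f)(v)\sim(g\circ f)(w)$ yields the equality directly, whereupon \eqref{fiberproduct3} gives
\begin{equation*}
f^*\bigl[g^*(\Gamma_3\times_\phi F)\bigr]\cong\Gamma_1\times_{f^*(g^*\phi)}F=\Gamma_1\times_{(g\circ f)^*\phi}F\cong(g\circ f)^*(\Gamma_3\times_\phi F).
\end{equation*}
This route repackages the same case distinction above into a one-line check on voltages, but I would still present the direct vertex-and-edge argument as the primary proof since it is self-contained and makes the functoriality transparent.
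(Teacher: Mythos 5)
Your proposal is correct, and it in fact contains two complete arguments, one of which is the paper's. Your primary route --- the explicit bijection $\Theta\bigl(v,(f(v),x)\bigr)=(v,x)$ with the case analysis over edge types, including the converse verification that every edge of $[g\circ f]^*X$ is hit --- is genuinely different from the paper's proof of (1). The paper proves (1) exactly by your ``cleaner alternative'': it uses Remark \ref{rem0} and Proposition \ref{lemma0} to replace $(X,p,\Gamma_3)$ by $(\Gamma_3\times_\phi F,q,\Gamma_3)$, observes that $f^*(g^*\phi)=[g\circ f]^*\phi$ follows directly from \eqref{defn0}, and chains the resulting equivalences via \eqref{fiberproduct3}; for (2) it writes down the same forgetful bijection you use. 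As for what each approach buys: the paper's voltage-function route is short because the combinatorial work was already sunk into \eqref{fiberproduct3} and Proposition \ref{lemma0}, but it leaves all edge-level bookkeeping implicit; your direct argument is self-contained, and the subtle point you isolate --- that a type II edge of $[g\circ f]^*X$ with $(g\circ f)(v)=(g\circ f)(w)$ can pull back either to a type II edge along $f$ (when $f(v)=f(w)$) or to a type III edge along $f$ sitting over a type II edge of $g^*X$ (when $f(v)\sim f(w)$) --- is precisely the case distinction that the voltage identity compresses into the one-line remark that $f(v)=f(w)$ forces $g(f(v))=g(f(w))$. If anything, your direct proof is more careful than the paper's: the paper never explicitly checks that $\Theta$ is surjective on edges (this is absorbed into the voltage machinery), and its proof of (2) writes the vertices of $\mathrm{id}^*X$ loosely as $(x,x)$, where your $(p(x),x)$ is the accurate form.
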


\begin{proof}
Assume that $(X, p, \Gamma_1) \cong (\Gamma_1 \times_\phi F, q, \Gamma_1)$ for some $F$-voltage function $\phi$. Thanks to Example \ref{exem0}, we know that such a $\phi$ exists.
As a consequence of Lemma \ref{lemma0} and Remark \ref{rem0}, we know that
\begin{equation}
g^*(X, p, \Gamma_3) = (g^*X, g^*p, \Gamma_2) \cong (g^*(\Gamma_1 \times_\phi F), g^*(q), \Gamma_2) = (\Gamma_2 \times_{g^*\phi} F, r, \Gamma_2),
\end{equation}
where $r: \Gamma_2 \times_{g^*\phi} F \longrightarrow \Gamma_2$ is the morphism such that $r(v, g) = v$ for each $(v,g)$ in $\Gamma_2 \times_{g^*\phi} F$.
Notice that $f^*(g^*\phi) = [g \circ f]^*\phi$: it directly follows from the definition of pullback of a $F$-voltage function described by \eqref{defn0}. This implies that, if we denote by $q: \Gamma_1 \times_{[g \circ f]^*\phi} F \longrightarrow \Gamma_1$ the morphism such that $q(v, x) = v$ for each vertex $(v,x)$ of $\Gamma_1 \times_{[g \circ f]^*\phi} F$,
\begin{equation}
    f^*[g^*(X, p, \Gamma_3)] = (f^*(g^*X), f^*(g^*p), \Gamma_1) \cong (f^*(\Gamma_2 \times_{g^*\phi} F), f^*(g^*p), \Gamma_1)\end{equation}
 \[   = (\Gamma_1 \times_{f^*(g^*\phi)} F, q, \Gamma_1)    = (\Gamma_1 \times_{[g \circ f]^*\phi} F, q, \Gamma_1) = ([g \circ f]^*X, [g \circ f]^*p, \Gamma_1).\]
This proves (1). Concerning (2), note that the morphism induced by the bijection $\Phi: V_{\mathrm{id}^*X} \longrightarrow V_X$ is defined by $\Phi(x,x)=x$  and it is  an isomorphism.
\end{proof}

\begin{cor}
Let $f: \Gamma \longrightarrow \Gamma'$ be a morphism of graphs and let $(X, p, \Gamma)$ be a $F$-graph bundle. If $f(\Gamma)$ is a tree, then $f^*(X, p, \Gamma)$ is trivial.    
\end{cor}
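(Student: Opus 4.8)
The plan is to reduce the statement to the two facts already in hand: that every graph bundle over a tree is trivial (Remark \ref{remtree}) and that the pullback of a trivial bundle is again trivial (Corollary \ref{pulltrivial}). The bridge between them is the functoriality of the pullback established in Proposition \ref{funct}, which I would apply to a factorization of $f$ through its image. Throughout, the bundle being pulled back is the $F$-graph bundle $(X, p, \Gamma')$ over the codomain $\Gamma'$ of $f$, so that the pullback $f^{*}(X,p,\Gamma')$ is a well-defined bundle over $\Gamma$.

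First I would factor the morphism $f \colon \Gamma \longrightarrow \Gamma'$ as $f = \iota \circ f_0$, where $f_0 \colon \Gamma \longrightarrow f(\Gamma)$ is $f$ with its codomain restricted to the image subgraph, and $\iota \colon f(\Gamma) \longrightarrow \Gamma'$ is the inclusion. Both are morphisms of graphs in the sense of Definition \ref{morphism}: for $\iota$ this is immediate since $E_{f(\Gamma)} \subseteq E_{\Gamma'}$, and for $f_0$ it follows from the very definition of the image graph $f(\Gamma)$, since $v \sim w$ in $\Gamma$ forces either $f_0(v) = f_0(w)$ or $\{f_0(v), f_0(w)\} \in E_{f(\Gamma)}$.

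Then I would invoke Proposition \ref{funct}(1) with the pair $(f_0, \iota)$, whose composite is $\iota \circ f_0 = f$, to obtain
\begin{equation*}
f^{*}(X, p, \Gamma') = (\iota \circ f_0)^{*}(X, p, \Gamma') \cong f_0^{*}\!\left[\iota^{*}(X, p, \Gamma')\right].
\end{equation*}
Now $\iota^{*}(X, p, \Gamma')$ is an $F$-graph bundle whose base is $f(\Gamma)$, and since $f(\Gamma)$ is a tree by hypothesis, Remark \ref{remtree} shows this bundle is trivial. Applying Corollary \ref{pulltrivial} to the morphism $f_0$, the pullback $f_0^{*}\!\left[\iota^{*}(X, p, \Gamma')\right]$ is again trivial, and therefore so is $f^{*}(X, p, \Gamma')$.

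The only point deserving care is the interplay between the isomorphisms $\cong$ of Proposition \ref{funct} and the notion of triviality, which is defined via equivalence over the identity of the base in Definition \ref{equiv.bundle}. This is not a genuine obstacle, because all the identifications produced in Section \ref{section3} (see the explicit maps in Remark \ref{rem0} and in the proof of Proposition \ref{funct}) are equivalences over the identity morphism of the base, so that being trivial is transported intact along each $\cong$. The verification that $f_0$ and $\iota$ are graph morphisms is the only routine checking involved, and I expect no serious difficulty beyond keeping the factorization and the direction of the pullback straight.
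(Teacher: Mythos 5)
Your proposal is correct and follows essentially the same route as the paper's own proof: factor $f$ through its image as $f = \iota \circ f_0$, apply the functoriality of Proposition \ref{funct}, observe that the pullback along the inclusion is a bundle over the tree $f(\Gamma)$ and hence trivial by Remark \ref{remtree}, and conclude via Corollary \ref{pulltrivial}. Your added remarks---that the bundle must be read as living over $\Gamma'$ for the pullback along $f$ to be well-defined, and that the isomorphisms of Proposition \ref{funct} are equivalences over the identity of the base so triviality is preserved---are sensible clarifications of points the paper leaves implicit, but they do not change the argument.
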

\begin{proof}
Denote by $\overline{f}:\Gamma \longrightarrow f(\Gamma)$ the morphism defined as $\overline{f}(x) = f(x)$ for each $x$ in $V_\Gamma$ and by $i: f(\Gamma) \longrightarrow \Gamma'$ the morphism such that $i(y) = y$ for each $y$ in $f(\Gamma)$. As a consequence of Proposition \ref{funct} with $f = i \circ \overline{f}$, we have that
\begin{equation}
    f^*(X,p, \Gamma) = \overline{f}^*(i^*(X,p, \Gamma)).
\end{equation}
Note that $i^*(X,p, \Gamma)$ is trivial, since $f(\Gamma)$ is a tree (see also Remark \ref{remtree}). From Corollary \ref{pulltrivial},  its pullback along $\overline{f}$ is trivial. This ends the proof.
\end{proof}

The universal property of Definition \ref{pullback} is explained by the following result. This is in line with the categorical approach of looking at pullbacks of graph bundles in analogy with corresponding constructions in algebraic topology, differential geometry and topological algebra.

\begin{thm}\label{thm0}
Let $\Gamma$ and $\Gamma'$ be two graphs and let $f: \Gamma \longrightarrow \Gamma'$ be a morphism of graphs. Let $(X, p, \Gamma')$ be a graph bundle and denote by $f^*(X,p, \Gamma')$ its pullback along $f$. Then there is a graph morphism $\mathcal{F}: f^*(X) \longrightarrow X$ such that the following diagram commute
\begin{equation}
\adjustbox{scale=1.5, center}{
\begin{tikzcd}
f^*(X) \arrow{r}{\mathcal{F}} \arrow[swap]{d}{f^*p} & X \arrow{d}{p} \\%
\Gamma \arrow{r}{f}& \Gamma'.
\end{tikzcd}}   
\end{equation}
\end{thm}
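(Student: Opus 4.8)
The plan is to define $\mathcal{F}$ as the projection onto the second coordinate and then verify directly that it is a graph morphism making the square commute. Concretely, for each vertex $(v,x)$ of $f^*X$ I would set $\mathcal{F}(v,x) = x$. This is well-defined because, by Definition \ref{pullback}, the vertices of $f^*X$ are exactly the pairs $(v,x) \in V_\Gamma \times V_X$ with $f(v) = p(x)$, so the second coordinate $x$ is genuinely a vertex of $X$.

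The commutativity of the diagram is then immediate from the defining constraint on the vertices of the pullback. Indeed, $p(\mathcal{F}(v,x)) = p(x)$ while $f(f^*p(v,x)) = f(v)$, and these agree because every vertex of $f^*X$ satisfies $f(v) = p(x)$. Hence $p \circ \mathcal{F} = f \circ f^*p$ with no further argument, which is the content we are after.

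The only substantive step is to check that $\mathcal{F}$ respects adjacency in the sense of Definition \ref{morphism}, and for this I would simply run through the three edge types of Definition \ref{pullback}. For a type I edge $(v,x) \sim (v,y)$ one has $x \sim y$ by definition, so $\mathcal{F}(v,x) = x \sim y = \mathcal{F}(v,y)$; for a type III edge $(v,x) \sim (w,y)$ one again has $x \sim y$, so adjacency is preserved. The point worth flagging is the type II edge $(v,x) \sim (w,x)$, where $f(v) = f(w)$ and the second coordinates coincide: here $\mathcal{F}(v,x) = x = \mathcal{F}(w,x)$, so the two endpoints are \emph{identified} rather than sent to an edge. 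This is precisely where the generalized notion of morphism matters, since the clause $f(v) = f(w)$ in Definition \ref{morphism} is exactly what licenses collapsing such an edge; it is the graph-theoretic shadow of $f$ identifying the two base vertices $f(v) = f(w)$ while the fiber coordinate is held fixed. Because Definition \ref{morphism} permits both $f(v) \sim f(w)$ and $f(v) = f(w)$, all three cases are covered, $\mathcal{F}$ is a morphism, and the proof is complete. I expect no genuine obstacle here — the construction is forced, and the whole argument reduces to bookkeeping over the edge types, with the type II collapse being the only subtlety to mention explicitly.
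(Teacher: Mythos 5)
Your proposal is correct and takes essentially the same approach as the paper's proof: the same map $\mathcal{F}(v,x) = x$, the same case analysis over the three edge types of Definition \ref{pullback}, and the same one-line commutativity check $p(\mathcal{F}(v,x)) = p(x) = f(v) = f(f^*p(v,x))$. Your write-up is in fact slightly more precise than the paper's, which contains two slips in the case analysis (it writes $f(v)\sim f(w)$ instead of $f(v)=f(w)$ for type II edges, and concludes $\mathcal{F}(v,x)=\mathcal{F}(w,y)$ instead of $\mathcal{F}(v,x)\sim\mathcal{F}(w,y)$ for type III), whereas you handle both cases correctly and rightly flag the type II collapse as the point where the generalized notion of morphism is essential.
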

\begin{proof}
Recall that
\begin{equation}
    V_{f^*(X)} = \{(v,x) \in V_{\Gamma} \times V_X \vert f(v) = p(x)\}.
\end{equation}
Then define $\mathcal{F}: V_{f^*(X)} \longrightarrow X$ as the function such that, for each $(v,x)$ in $V_{f^*(X)}$, $\mathcal{F}(v,x) = x$. First we have to prove that $\mathcal{F}$ is a graph morphism. Let $(v,x) \sim (w,y)$ in $V_{f^*(X)}$. 
\begin{itemize}
\item[(1).] if $(v,x)$ and $(w,y)$ are vertices of an edge of type I, then $v=w$ and $x \sim y$. Then $\mathcal{F}(v,x) = x$ and $\mathcal{F}(v,y) = y$ are vertices of an edge in $X$,
\item[(2).]  if $(v,x)$ and $(w,y)$ are vertices of an edge of type II, then $v\sim w$, $f(v) \sim f(w)$ and $x = y$ and so $\mathcal{F}(v,x) = x = y =\mathcal{F}(w,y)$,
\item[(3).]  finally, if $(v,x)$ and $(w,y)$ are vertices of an edge of type III, then $v\sim w$, $f(v) \sim f(w)$ and $x \sim y$ and so $\mathcal{F}(v,x) = \mathcal{F}(w,y)$.
\end{itemize}
This implies that $\mathcal{F}$ is a graph morphism. Moreover the diagram commutes, since
\begin{equation}
    p(\mathcal{F}(v,x)) = p(x) = f(v) = f(f^*p(v,x)).
\end{equation}
\end{proof}

\subsection{Adjacency matrix of the pullback of graph bundles} 
As already mentioned in the previous section, since each equivalence class of $F$-graph bundles admits a representative expressible through an $F$-voltage function, in order to compute the adjacency matrix of a pullback bundle, we will consider a graph bundle $(\Gamma'\times_\phi F, p, \Gamma')$ and a morphism of graphs $f:\Gamma \longrightarrow \Gamma'$.

We must invoke the notion \textit{Hadamard product of matrices}, which can be found in \cite[\S 10.5]{spectrabook}: if $A=(a_{ij})$ and $B=(b_{ij})$ are two matrices of  same size with $i\in \{1, 2, \ldots,n\}$ and $j \in \{1,2, \ldots, m\}$ for $n,m$ given natural numbers,  the Hadamard product \begin{equation}A \circ B = (c_{ij}) \end{equation} is the matrix with the same size of $A$ and $B$ such that $c_{ij} = a_{ij}b_{ij}$.

Let $M_f$ be the matrix of size $\vert V_{\Gamma'} \vert \times \vert V_{\Gamma}\vert$ defined by
\begin{equation}\label{relevantmatrix}
(M_{f})_{w,v} = \begin{cases}
1 \mbox{ if } f(v) = w,\\
\\
0 \mbox{ otherwise.}
\end{cases}
\end{equation}
\begin{example}
Let's take again the $2$-fold covering of Example \ref{exem.cover}. If we fix the order, which is induced by the well order of the natural numbers, on  $V_{C_6}$ and $V_{C_2}$, then we obtain
\begin{equation}
M_{p} = \begin{bmatrix} 0 & 0& 1& 0 & 0& 1 \\
1 & 0& 0& 1 & 0& 0 \\
0 & 1& 0& 0 & 1& 0
\end{bmatrix}.
\end{equation}
\end{example}

Following the usual matrix notation \cite{spectrabook, crs, gross2},  $M^T_{f}$ denotes the transpose of \eqref{relevantmatrix}. Note that the $(v,w)$-entry of $M^T_fM_f$ satisfies the following equality
\begin{equation}\label{transpose}
    (M^T_fM_f)_{vw}\begin{cases} 1 & f(v)=f(w)\\
    0 &\mbox{otherwise.}
    \end{cases}
\end{equation}

The study of \eqref{relevantmatrix} is fundamental for the description of the adjacency matrix of a pullback bundle $f^*(X,p,\Gamma)$ in terms of $(X, p, \Gamma)$ and $f$.
\begin{thm}[Adjacency Matrix for Pullback of Graph Bundles]\label{1stmaintheorem}
Given a morphism of graphs $f: \Gamma \longrightarrow \Gamma'$ and  an $F$-graph bundle $(\Gamma' \times_\phi F, p, \Gamma')$, the adjacency matrix of the total space of the pullback bundle $(\Gamma \times_{f^*\phi} F, f^*p, \Gamma)$ is given by 
\begin{equation}\label{agammasupersuperextra}
A_{\Gamma \times_{f^*\phi} F}    = \sum\limits_{\psi \in \mathrm{Aut}(F)} [A_{\Gamma} \circ B_{f,\psi}] \otimes \mathrm{perm}(\psi) + I_{\vert V_\Gamma\vert} \otimes F
\end{equation}
where 
\begin{equation}
B_{f,\psi} =\begin{cases}
M_f^T(I_{\vert V_\Gamma\vert } + A_{\Gamma'}(\psi))M_f &\mbox{ if } \psi = \mathrm{id}_F\\
\\
M_f^TA_{\Gamma'}(\psi)M_f &\mbox{ otherwise. }
\end{cases}
\end{equation}
\end{thm}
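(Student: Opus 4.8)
The plan is to reduce the entire statement to Lemma \ref{th.kwakk}. By Remark \ref{rem0} (specifically \eqref{fiberproduct3}), the pullback bundle is the voltage bundle $(\Gamma \times_{f^*\phi} F, f^*p, \Gamma)$ whose voltage function $f^*\phi$ is given by \eqref{defn0}. Applying Lemma \ref{th.kwakk} verbatim to this bundle yields
\begin{equation}
A_{\Gamma \times_{f^*\phi} F} = \sum_{\psi \in \mathrm{Aut}(F)} A_{\Gamma}^{f^*\phi}(\psi) \otimes \mathrm{perm}(\psi) + I_{\vert V_\Gamma\vert} \otimes A_F,
\end{equation}
where $A_{\Gamma}^{f^*\phi}(\psi)$ is the $\{0,1\}$-matrix whose $(v,w)$-entry equals $1$ precisely when $v \sim w$ in $\Gamma$ and $(f^*\phi)(v,w) = \psi$. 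Since the fiber term $I_{\vert V_\Gamma\vert} \otimes A_F$ already coincides with the last summand of the claimed formula, the whole theorem reduces to the single matrix identity $A_{\Gamma}^{f^*\phi}(\psi) = A_\Gamma \circ B_{f,\psi}$ for every $\psi \in \mathrm{Aut}(F)$.

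The engine of the comparison is the elementary conjugation identity: for any matrix $C$ indexed by $V_{\Gamma'}$ one has $(M_f^T C M_f)_{vw} = C_{f(v),f(w)}$. This follows at once from $(M_f)_{sv} = 1 \iff f(v) = s$, which collapses the double sum to the single surviving term $s = f(v)$, $t = f(w)$; the already recorded formula \eqref{transpose} is exactly this identity for $C = I_{\vert V_{\Gamma'}\vert}$. Feeding in $C = A_{\Gamma'}(\psi)$ gives $(M_f^T A_{\Gamma'}(\psi) M_f)_{vw} = (A_{\Gamma'}(\psi))_{f(v),f(w)}$, which is $1$ precisely when $f(v) \sim f(w)$ in $\Gamma'$ and $\phi_{f(v)f(w)} = \psi$. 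Taking the Hadamard product with $A_\Gamma$ then superimposes the extra requirement that $v \sim w$ in $\Gamma$, which is exactly the restriction to oriented edges of $\Gamma$ that the definition of $A_\Gamma^{f^*\phi}(\psi)$ demands.

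It remains to match these entry patterns against \eqref{defn0}, splitting on whether $\psi = \mathrm{id}_F$. For $\psi \neq \mathrm{id}_F$ the equality $(f^*\phi)(v,w) = \psi$ forces $f(v) \neq f(w)$, hence $f(v) \sim f(w)$ (as $f$ is a graph morphism) together with $\phi_{f(v)f(w)} = \psi$; this is exactly the entry pattern of $A_\Gamma \circ (M_f^T A_{\Gamma'}(\psi) M_f)$ computed above. For $\psi = \mathrm{id}_F$ there are two disjoint ways to realize $(f^*\phi)(v,w) = \mathrm{id}_F$: either the edge is collapsed by $f$ (the case $f(v) = f(w)$, a type II edge of Definition \ref{pullback}), or it is genuinely lifted with trivial voltage (the case $f(v) \sim f(w)$ and $\phi_{f(v)f(w)} = \mathrm{id}_F$). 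The first alternative is captured by the extra summand $M_f^T I_{\vert V_{\Gamma'}\vert} M_f = M_f^T M_f$ via \eqref{transpose}, and the second by $M_f^T A_{\Gamma'}(\mathrm{id}_F) M_f$; since $f(v) = f(w)$ and $f(v) \sim f(w)$ are mutually exclusive, the sum stays a $\{0,1\}$-matrix, and the final Hadamard product with $A_\Gamma$ discards any pair with $v \not\sim w$. This asymmetry is precisely why $B_{f,\mathrm{id}_F}$ carries the additional identity matrix while $B_{f,\psi}$ does not for $\psi \neq \mathrm{id}_F$, and it is the only delicate point of the argument: I expect the main obstacle to be the correct bookkeeping of the collapsed edges and the recognition that they contribute only trivial-voltage (type II) edges to the pullback, everything else being a routine index computation.
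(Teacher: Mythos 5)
Your proposal is correct and takes essentially the same route as the paper's proof: both reduce the statement to Lemma \ref{th.kwakk} via the voltage-function description of the pullback from Remark \ref{rem0}, and then verify the entrywise identity $A_\Gamma(\psi) = A_\Gamma \circ B_{f,\psi}$ using the transport identity $(M_f^T C M_f)_{vw} = C_{f(v)f(w)}$ together with the mutually exclusive dichotomy $f(v)=f(w)$ versus $f(v)\sim f(w)$. The only difference is organizational (you case on $\psi$ after stating the conjugation identity in general, while the paper cases on the vertex pairs $(v,w)$), so the two arguments have identical mathematical content.
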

\begin{proof}
Given an automorphism $\psi$ of $F$, denote by $A_\Gamma(\psi)$ the matrix indexed by $V_\Gamma$ such that $(A_\Gamma(\psi))_{vw}=1$ if $f^*\phi(v,w)= \psi$ and $0$ otherwise.
\\Thanks to Lemma \ref{th.kwakk}, it is sufficient to show that, for the bundle $(\Gamma \times_{f^*\phi} F, f^*p, \Gamma)$, the matrix $A_\Gamma(\psi)$ is equal to $A_{\Gamma} \circ B_{f,\psi}$. In particular the entries 
\begin{equation}\label{vweq}
 (A_{\Gamma}\circ B_{f,\psi})_{vw} = (A_\Gamma(\psi)_{vw}
\end{equation}
for each couple of vertices $v,w$ in $V_\Gamma$ and for each $\psi$ in $\mathrm{Aut}(\Gamma)$. Note that if $v$ and $w$ are two not adjacent vertices in $\Gamma$, then $A_{\Gamma}(v,w)$ and $A_\Gamma(\psi)(v,w)$ are both $0$. So, if $v$ and $w$ are not adjacent  \eqref{vweq} holds. Assume that $v\sim w$ and $f(v) \sim f(w)$. In this case, thanks to Remark \ref{transpose}, $(M_f^TM_{f})_{vw} = 0$. Then, for each $\psi$ in $\mathrm{Aut}(F)$,
\begin{equation}
(A_{\Gamma}\circ B_{f,\psi})_{vw} = (M_f^TA_{\Gamma'}(\psi)M_f)_{vw} = (A_{\Gamma'}(\psi))_{f(v)f(w)}
\end{equation}
which is $1$ if $\phi(f(v), f(w)) = \psi$ and $0$ otherwise. On the other hand, the $(v,w)$-entry of $A_\Gamma(\psi)$ is $1$ if $f^*\phi(v, w) =\psi$ and $0$ otherwise. However, since $f(v) \sim f(w)$, then $\phi(f(v), f(w)) = f^*\phi(v,w)$. This implies  \eqref{vweq}. Finally assume $v \sim w$ and $f(v) = f(w)$. In this case $M_f^TA_{\Gamma'}(\psi)M_f(v,w) = 0$ for each $\psi$ in $\mathrm{Aut}(F)$ and, if $\psi \neq \mathrm{id}_F$, then $A_\Gamma(\psi)(v,w) = 0$. So Equality \ref{vweq} holds for each $\psi \neq \mathrm{id}_F$. On the other hand, if $\psi = \mathrm{id}_F$, then
\begin{equation}
(A_{\Gamma}\circ B_{f,\mathrm{id}_F})_{vw} = (M_f^TM_f)_{vw} = 1,
\end{equation}
since $f(v)= f(w)$, and, for the same reason $(A_{\Gamma}(\mathrm{id}_F))_{vw} = 1$: it directly follows from the definition of $f^*\phi$. This ends the proof.
\end{proof}
\begin{rem}
This result is coherent with the Formulas \eqref{agammaproduct1}, \eqref{agammaextra} and \eqref{agammasuperextra} if $f$ is the identity morphism.
\end{rem}
\begin{example}
Let $p: C_6 \longrightarrow C_3$ be the projection of the $2$-fold covering of $C_3$  in Example \ref{exem.cover} and let $(M_3, q, C_3)$ be the graph bundle in Example \ref{exem.bundle}. As shown in Example \ref{exem.pullb}, the graph bundle $p^*(M_3, q, C_3)$ is equivalent to the graph bundle $(C_6 \times_{p^* \phi} K_2, p^* q, C_6)$, which is the pullback of $(C_3 \times_\phi K_2, r, C_3)$ which has $\phi(1,2) = \phi(2,3) = \mathrm{id}_{K_2}$ and $\phi(1,3) = \alpha$ with   $\alpha \neq \mathrm{id}_{K_2} \in \mathrm{Aut}(K_2)$.If $A_{C_6}(\alpha)$ denotes the adjacency matrix such that the $(v,w)$-entry is equal to $1$ if $p^*\phi(v,w)= \alpha$ and $0$ otherwise, then
\begin{equation*}A_{C_6} \circ B_{p,\mathrm{id}_{K_2}} = 
\begin{bmatrix}
0 & 1 & 0 &0 &0 &1\\
1 & 0 & 1 &0 &0 &0\\
0 & 1 & 0 &1 &0 &0\\
0 & 0 & 1 &0 &1 &0\\
0 & 0 & 0 &1 &0 &1\\
1 & 0 & 0 &0 &1 &0\\
\end{bmatrix}
\circ 
\begin{bmatrix}
1 & 1 & 1 &1 &1 &1\\
1 & 1 & 0 &1 &1 &0\\
1 & 0 & 1 &1 &0 &1\\
1 & 1 & 1 &1 &1 &1\\
1 & 1 & 0 &1 &1 &0\\
1 & 0 & 1 &1 &0 &1\\
\end{bmatrix}
=
\begin{bmatrix}
0 & 1 & 0 &0 &0 &1\\
1 & 0 & 0 &0 &0 &0\\
0 & 0 & 0 &1 &0 &0\\
0 & 0 & 1 &0 &1 &0\\
0 & 0 & 0 &1 &0 &0\\
1 & 0 & 0 &0 &0 &0\\
\end{bmatrix}
= A_{C_6}(\mathrm{id}_{K_2}),
\end{equation*}
and
\begin{equation*}
A_{C_6} \circ B_{p,\alpha} = \begin{bmatrix}
0 & 1 & 0 &0 &0 &1\\
1 & 0 & 1 &0 &0 &0\\
0 & 1 & 0 &1 &0 &0\\
0 & 0 & 1 &0 &1 &0\\
0 & 0 & 0 &1 &0 &1\\
1 & 0 & 0 &0 &1 &0\\
\end{bmatrix}
\circ 
\begin{bmatrix}
0 & 0 & 0 &0 &0 &0\\
0 & 0 & 1 &0 &0 &1\\
0 & 1 & 0 &0 &1 &0\\
0 & 0 & 0 &0 &0 &0\\
0 & 0 & 1 &0 &0 &1\\
0 & 1 & 0 &0 &1 &0\\
\end{bmatrix}
=
\begin{bmatrix}
0 & 0 & 0 &0 &0 &0\\
0 & 0 & 1 &0 &0 &0\\
0 & 1 & 0 &0 &0 &0\\
0 & 0 & 0 &0 &0 &0\\
0 & 0 & 0 &0 &0 &1\\
0 & 0 & 0 &0 &1 &0\\
\end{bmatrix}
=A_{C_6}(\alpha).
\end{equation*}
From the above computations and from Lemma \ref{th.kwakk}, we have a concrete application of Theorem \ref{1stmaintheorem}. In the present situation,  we  obtain that the adjacency matrix of $C_6 \times_{p^*\phi} K_2$ is 
\begin{equation}
    A_{C_6 \times_{p^*\phi} K_2}    = [A_{C_6} \circ B_{p,\mathrm{id}_{K_2}}] \otimes I_2 + [A_{C_6} \circ B_{p,A}] \otimes \overline{I}_2 + I_6 \otimes I_{2} \otimes \overline{I}_2,
\end{equation}
where $\overline{I}_2 =  \begin{bmatrix}0 & 1 \\ 1 & 0\end{bmatrix}$ is the adjacency matrix of $K_2$.
\end{example}

\section{Subdirect product of graph bundles}\label{section4}

We begin with   an $F_1$-graph bundle $(X_1, q_1, \Gamma)$ and  with an $F_2$-graph bundle $(X_2,q_2, \Gamma)$ on the same base $\Gamma$. Denote by $X_1 \boxplus X_2$ the total space of the pullback graph bundle $p_1^*(X_2, p_2, \Gamma)$. Therefore we have the following commutative diagram:
\begin{equation}
\adjustbox{scale=1.5,center}{
\begin{tikzcd}
    X_1 \boxplus X_2 \arrow{r}{\mathcal{Q}_1} \arrow[swap]{d}{q_1^*q_2} & X_2 \arrow{d}{q_2}\\
X_1 \arrow{r}{q_1}& \Gamma,
\end{tikzcd}}
\end{equation}
where the compositions of the  morphisms of graphs is briefly written 
\begin{equation}
(q_1, q_2) =  q_1 \circ q_1^*q_2   
\end{equation} 
and the map $\mathcal{Q}_1$ is induced naturally from the commutativity of the above diagram. In analogy with algebraic topology, in which we have the operation of direct sums of vector bundles, we are going to show that $X_1 \boxplus X_2$ is the total space of a graph bundle over the graph $\Gamma$.
\begin{thm}[Existence of Subdirect Products for Graph Bundles]\label{2ndmaintheorem}
If we have a $F_i$-graph bundle $(X_i, p_i, \Gamma)$ with same base $\Gamma$ for $i=1,2$, then the triple $(X_1  \ \boxplus  \ X_2, (p_1,p_2), \Gamma)$ is an $F_1 \ \square  \ F_2$-graph bundle.
\end{thm}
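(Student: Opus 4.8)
The plan is to verify directly that the composite projection $(p_1,p_2) = p_1 \circ p_1^*p_2$ from the total space $X_1 \boxplus X_2 = p_1^*X_2$ down to $\Gamma$ satisfies the three conditions of Definition \ref{graphbundle} with fibre $F_1 \square F_2$. Unwinding Definition \ref{pullback} with the morphism $p_1 \colon X_1 \longrightarrow \Gamma$ in the role of $f$ and the bundle $(X_2, p_2, \Gamma)$ in the role of $(X, p, \Gamma')$, the vertex set of $X_1 \boxplus X_2$ is $\{(x_1, x_2) \in V_{X_1} \times V_{X_2} \mid p_1(x_1) = p_2(x_2)\}$, and $(x_1, x_2) \sim (x_1', x_2')$ exactly when one of the following holds: (I) $x_1 = x_1'$ and $x_2 \sim x_2'$; (II) $x_1 \sim x_1'$, $p_1(x_1) = p_1(x_1')$ and $x_2 = x_2'$; (III) $x_1 \sim x_1'$, $p_1(x_1) \sim p_1(x_1')$ and $x_2 \sim x_2'$. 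The projection is $(p_1,p_2)(x_1,x_2) = p_1(x_1) = p_2(x_2)$, which is a surjective graph morphism because $p_1$ and $p_2$ are surjective. Note this is a genuinely new bundle structure: the pullback itself is an $F_2$-bundle over $X_1$, whereas here we project all the way to $\Gamma$.

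First I would check condition (1). Fix $v \in V_\Gamma$. The fibre $(p_1,p_2)^{-1}(v)$ has vertex set $p_1^{-1}(v) \times p_2^{-1}(v)$, which I identify with $V_{F_1} \times V_{F_2}$ via the fibre isomorphisms $p_1^{-1}(v) \cong F_1$ and $p_2^{-1}(v) \cong F_2$ coming from the bundle structures of $X_1$ and $X_2$. Two such vertices can only be joined by edges of type I or type II, since a type III edge demands $p_1(x_1) \sim p_1(x_1')$, impossible when both equal $v$. A type I edge fixes the first coordinate and joins $F_2$-adjacent second coordinates, while a type II edge fixes the second coordinate and joins $F_1$-adjacent first coordinates. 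This is precisely the adjacency of the Cartesian product, so the fibre is isomorphic to $F_1 \square F_2$.

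Next, conditions (2) and (3). Passing to $\widetilde{X_1 \boxplus X_2}$ deletes exactly the within-fibre edges, namely the type I and type II edges, and retains only the type III edges. I would show that for each vertex $(x_1, x_2)$ over $v$ and each $w \sim v$ in $\Gamma$ there is exactly one neighbour over $w$, namely $\bigl(\psi^{(1)}_{vw}(x_1), \psi^{(2)}_{vw}(x_2)\bigr)$, where $\psi^{(i)}_{vw}$ is the fibre isomorphism of the covering underlying $(X_i, p_i, \Gamma)$. Indeed, a type III neighbour over $w$ must have first coordinate $p_1$-adjacent to $x_1$ and lying over $w$ (uniquely $\psi^{(1)}_{vw}(x_1)$, by the covering property of $\widetilde{X_1}$), and second coordinate $p_2$-adjacent to $x_2$ and over $w$; since $v \sim w$ forces $v \neq w$, this last edge is necessarily horizontal in $X_2$, hence uniquely $\psi^{(2)}_{vw}(x_2)$. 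This produces the required lifting and shows $\widetilde{(p_1,p_2)}$ is a $\vert V_{F_1}\vert \, \vert V_{F_2}\vert = \vert V_{F_1 \square F_2}\vert$-fold covering, giving (2). The induced bijection between fibres is then $\psi_{vw} = \psi^{(1)}_{vw} \times \psi^{(2)}_{vw}$, a coordinatewise product of graph isomorphisms and therefore an isomorphism of $F_1 \square F_2$, which is (3).

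The main obstacle is the bookkeeping in condition (1): one must correctly match the three pullback edge types against Cartesian-product adjacency and confirm that no type III edge survives inside a single fibre. The uniqueness of the type III lift in condition (2) is the other delicate point, and it rests on the observation that $v \sim w$ forces $v \neq w$, so both coordinate edges are horizontal and hence determined by the covering data of $X_1$ and $X_2$ separately. A possibly slicker alternative is to realise $(X_i, p_i, \Gamma) \cong (\Gamma \times_{\phi_i} F_i, q_i, \Gamma)$ via voltage functions $\phi_i$ (Remark \ref{exem0}) and to verify that $X_1 \boxplus X_2 \cong \Gamma \times_\phi (F_1 \square F_2)$ for the voltage $\phi = (\phi_1, \phi_2)$ valued in $\mathrm{Aut}(F_1) \times \mathrm{Aut}(F_2) \subseteq \mathrm{Aut}(F_1 \square F_2)$, after which the comparison of adjacencies is purely formal.
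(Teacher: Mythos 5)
Your proposal is correct and follows essentially the same route as the paper's proof: a direct three-step verification of Definition \ref{graphbundle}, identifying the fibre over each $v \in V_\Gamma$ as $p_1^{-1}(v) \ \square \ p_2^{-1}(v) \cong F_1 \ \square \ F_2$ (only type I and II edges survive within a fibre), building the covering lifting coordinatewise from the liftings of $(X_1,p_1,\Gamma)$ and $(X_2,p_2,\Gamma)$, and observing that the induced fibre bijection $\psi_{vw} = (\psi^{1}_{vw}, \psi^{2}_{vw})$ is a coordinatewise product of isomorphisms and hence an isomorphism of Cartesian products. Your explicit uniqueness argument for the type III neighbour (using that $v \sim w$ forces $v \neq w$, so both coordinate edges are horizontal) is slightly more careful than the paper's corresponding assertion, and the voltage-function alternative you sketch at the end is a legitimate shortcut the paper reserves for its adjacency-matrix results rather than for this theorem.
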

\begin{proof}
First, observe that \begin{equation} (p_1, p_2) = p_1 \circ p_1^* p_2 \ :  \ X_1 \boxplus X_2 \longrightarrow \Gamma \end{equation} is a surjective morphism of graphs. Then we split the argument in three steps.
\begin{enumerate}
    \item[Step 1.] Given a vertex $v$ in $\Gamma$, then its preimage is the induced subgraph of $X_1 \boxplus X_2$ with vertices 
    \begin{equation}
        V_w = \{(x,y) \in V_{X_1} \times V_{X_2} \mid p_1(x) = p_2(y) = w\}.
    \end{equation}
    The edges connecting two vertices $(x,y)$ and $(x', y')$ in $V_w$ are edges of type I or  II as per Definition \ref{pullback},  while there are no edges of type III. However, this is exactly Cartesian product $p_1^{-1}(w)  \ \square  \ p_2^{-1}(w)$ of the fibers of $w$, which is  isomorphic to $F_1  \ \square  \ F_2$.
    \item[Step 2.] The map $\widetilde{(p_1,p_2)}: \widetilde{X_1 \boxplus X_2} \longrightarrow \Gamma$, where $\widetilde{X_1 \boxplus X_2}$ is the graph whose vertex set is $V_{X_1 \boxplus X_2}$ and the edges are those of type III in $X_1 \boxplus X_2$, is a graph covering of $\Gamma$. 
    Indeed, given $w= (p_1,p_2)(x,y)$, there are two lifting maps \begin{equation}l_{w,x}: N(w) \longrightarrow N(x) \ \  \mbox{and} \ \ l_{w,y}: N(w) \longrightarrow N(y).\end{equation} Therefore the map $l_{w,(x,y)}$ which is defined by
\begin{equation}l_{w,(x,y)} \ : \ z\in N(w)  \longmapsto (l_{w,x}(z), l_{w,y}(z)) \in N(x,y) \end{equation}
     is a lifting. This is due to the fact that a vertex $(x',y')$ is adjacent to $(x,y)$ if and only if $x \sim x'$, $y \sim y'$ and $p_1(x')=p_2(y')$.
\item[Step 3.] Now fix two vertices $w$ and $z$ in $\Gamma$ such that $w \sim z$. There is an induced bijection \begin{equation}\psi_{w,z} \ :  \ p_1^{-1}(w)  \ \square  \ p_2^{-1}(w) \longrightarrow  p_1^{-1}(z)  \ \square  \ p_2^{-1}(z) \ \mbox{such that}  \ \psi_{w,z}(x,y) = (x',y') = l_{w, (x,y)}(z).\end{equation} From Steps 1 and 2, we have that  
\begin{equation}\label{couple}
\psi_{w,z}(x,y) = (l_{w,x}(z), l_{w,y}(z)) = (\psi^1_{w,z}(x), \psi^2_{w,z}(y)) \ \mbox{where} \ \psi^i_{w,z} \ : \  p_i^{-1}(w) \longmapsto p_i^{-1}(z)
\end{equation}
is the isomorphism induced by the graph bundle $(X_i, p_i, \Gamma)$. 
\end{enumerate}
The proof now follows, when we note  (in the final step) that $\psi_{w,z} = (\psi^1_{w,z}, \psi^2_{w,z})$ is a graph isomorphism between $p_1^{-1}(w)  \ \square \  p_2^{-1}(w) $ and $p_1^{-1}(z)  \ \square  \ p_2^{-1}(z)$. \end{proof}
Mimicking the logic of construction in \cite[Chapter A, \S 19, \S 20]{dh} for groups, we may formulate a similar notion (which is completely new to the best of our knowledge) in the present context of graph bundles.
\begin{defn}\label{sub.dir.prod.}
The \textbf{subdirect product} of $(X_1, p_1, \Gamma)$ and $(X_2, p_2, \Gamma)$ is the graph bundle \begin{equation} (X_1, p_1, \Gamma) \ \boxplus  \ (X_2, p_2, \Gamma)\end{equation} given by $(X_1 \boxplus X_2, (p_1,p_2), \Gamma)$. 
\end{defn}

\begin{example}
Let $(M_3, q, C_3)$ and $(C_6 \square K_2, r, C_6)$ be the two $K_2$-graph bundles of Example \ref{exem.bundle}.  Then the subdirect product $(M_3 \boxplus (C_6 \square K_2), (q,r), C_3)$ is the $(K_2  \ \square  \ K_2)$-graph bundle over $C_3$ whose total space is depicted in Fig.\ref{subdir.M3}.
\begin{figure}
\begin{tikzpicture}[scale=1 ]
\node[vertex,fill=black] (11) at  (-8.75,-1)[scale=0.8,label=left:$1$] {};
\node[vertex,fill=black] (21) at  (-8.75,1)[scale=0.8,label=left:$2$] {};
\node[vertex,fill=black] (31) at  (-6.75,1)[scale=0.8,label=above left:$3$] {};
\node[vertex,fill=black] (41) at  (-6.75,-1)[scale=0.8,label=above left:$4$] {};

\node[vertex,fill=black] (12) at  (-6.5,2.732)[scale=0.8,label=left:$5$] {};
\node[vertex,fill=black] (22) at  (-6.5,4.732)[scale=0.8,label=left:$6$] {};
\node[vertex,fill=black] (32) at  (-4.5,4.732)[scale=0.8,label=above left:$7$] {};
\node[vertex,fill=black] (42) at  (-4.5,2.732)[scale=0.8,label=above left:$8$] {};

\node[vertex,fill=black] (13) at  (-1.5, 2.732) [scale=0.8,label=above right:$9$]{};
\node[vertex,fill=black] (23) at  (-1.5, 4.732) [scale=0.8,label=above right:$10$]{};
\node[vertex,fill=black] (33) at  (0.5, 4.732) [scale=0.8,label=right:$11$]{};
\node[vertex,fill=black] (43) at  (0.5, 2.732) [scale=0.8,label=right:$12$]{};

\node[vertex,fill=black] (14) at  (0.75, -1)[scale=0.8,label=above right:$13$] {};
\node[vertex,fill=black] (24) at  (0.75, 1)[scale=0.8,label=above right:$14$] {};
\node[vertex,fill=black] (34) at  (2.75, 1)[scale=0.8,label=right:$15$] {};
\node[vertex,fill=black] (44) at  (2.75, -1)[scale=0.8,label=right:$16$] {};

\node[vertex,fill=black] (15) at  (-1.5, -4.732)[scale=0.8,label=below right:$17$] {};
\node[vertex,fill=black] (25) at  (-1.5, -2.732)[scale=0.8,label=above left:$18$] {};
\node[vertex,fill=black] (35) at  (0.5, -2.732)[scale=0.8,label=right:$19$] {};
\node[vertex,fill=black] (45) at  (0.5, -4.732)[scale=0.8,label=right:$20$] {};

\node[vertex,fill=black] (16) at  (-6.5, -4.732)[scale=0.8,label=left:$21$] {};
\node[vertex,fill=black] (26) at  (-6.5, -2.732)[scale=0.8,label=left:$22$] {};
\node[vertex,fill=black] (36) at  (-4.5, -2.732)[scale=0.8,label=above right:$23$] {};
\node[vertex,fill=black] (46) at  (-4.5, -4.732)[scale=0.8,label=below left:$24$] {};

\draw[edge, thick]  (11) --  (21) node[midway, above right]  {};
\draw[edge, thick]  (21) --  (31) node[midway, above right]  {};
\draw[edge, thick]  (31) --  (41) node[midway, above right]  {};
\draw[edge, thick]  (41) --  (11) node[midway, above right]  {};

\draw[edge, thick]  (12) --  (22) node[midway, above right]  {};
\draw[edge, thick]  (22) --  (32) node[midway, above right]  {};
\draw[edge, thick]  (32) --  (42) node[midway, above right]  {};
\draw[edge, thick]  (42) --  (12) node[midway, above right]  {};

\draw[edge, thick]  (13) --  (23) node[midway, above right]  {};
\draw[edge, thick]  (23) --  (33) node[midway, above right]  {};
\draw[edge, thick]  (33) --  (43) node[midway, above right]  {};
\draw[edge, thick]  (43) --  (13) node[midway, above right]  {};

\draw[edge, thick]  (14) --  (24) node[midway, above right]  {};
\draw[edge, thick]  (24) --  (34) node[midway, above right]  {};
\draw[edge, thick]  (34) --  (44) node[midway, above right]  {};
\draw[edge, thick]  (44) --  (14) node[midway, above right]  {};

\draw[edge, thick]  (15) --  (25) node[midway, above right]  {};
\draw[edge, thick]  (25) --  (35) node[midway, above right]  {};
\draw[edge, thick]  (35) --  (45) node[midway, above right]  {};
\draw[edge, thick]  (45) --  (15) node[midway, above right]  {};

\draw[edge, thick]  (16) --  (26) node[midway, above right]  {};
\draw[edge, thick]  (26) --  (36) node[midway, above right]  {};
\draw[edge, thick]  (36) --  (46) node[midway, above right]  {};
\draw[edge, thick]  (46) --  (16) node[midway, above right]  {};

\draw[edge, thick]  (11) --  (12) node[midway, above right]  {};
\draw[edge, thick]  (21) --  (22) node[midway, above right]  {};
\draw[edge, thick]  (31) --  (32) node[midway, above right]  {};
\draw[edge, thick]  (41) --  (42) node[midway, above right]  {};

\draw[edge, thick]  (11) --  (16) node[midway, above right]  {};
\draw[edge, thick]  (21) --  (26) node[midway, above right]  {};
\draw[edge, thick]  (31) --  (36) node[midway, above right]  {};
\draw[edge, thick]  (41) --  (46) node[midway, above right]  {};

\draw[edge, thick]  (14) --  (13) node[midway, above right]  {};
\draw[edge, thick]  (24) --  (23) node[midway, above right]  {};
\draw[edge, thick]  (34) --  (33) node[midway, above right]  {};
\draw[edge, thick]  (44) --  (43) node[midway, above right]  {};

\draw[edge, thick]  (12) --  (23) node[midway, above right]  {};
\draw[edge, thick]  (22) --  (13) node[midway, above right]  {};
\draw[edge, thick]  (32) --  (43) node[midway, above right]  {};
\draw[edge, thick]  (42) --  (33) node[midway, above right]  {};

\draw[edge, thick]  (16) --  (25) node[midway, above right]  {};
\draw[edge, thick]  (26) --  (15) node[midway, above right]  {};
\draw[edge, thick]  (36) --  (45) node[midway, above right]  {};
\draw[edge, thick]  (46) --  (35) node[midway, above right]  {};

\draw[edge, thick]  (14) --  (15) node[midway, above right]  {};
\draw[edge, thick]  (24) --  (25) node[midway, above right]  {};
\draw[edge, thick]  (34) --  (35) node[midway, above right]  {};
\draw[edge, thick]  (44) --  (45) node[midway, above right]  {};

\end{tikzpicture}\caption{The graph $M_6 \boxplus (C_6 \square K_2)$.}\label{subdir.M3}
\end{figure}
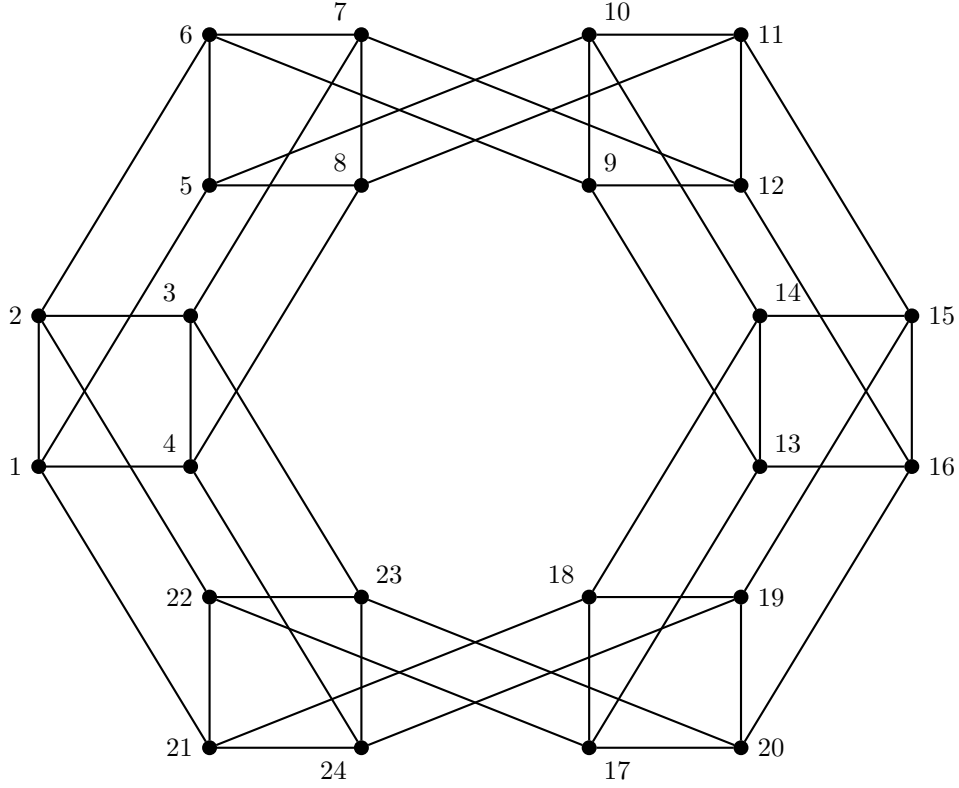
\end{example}

In next result it will be proved the invariance under equivalences for Definition \ref{sub.dir.prod.}. This will be useful, in order to calculate the corresponding adjacency matrix and  spectrum later on.

\begin{lem}\label{stab}
If $(X_i, p_i, \Gamma)$ are  graph bundles for each $i=1,2$ and $(Y_i, q_i, \Gamma)$ graphs bundles which are  equivalent to $(X_i, p_i, \Gamma)$,  then $(X_1  \ \boxplus \ X_2, (p_1,p_2), \Gamma)$ is equivalent to $(Y_1 \  \boxplus  \ Y_2, (q_1,q_2), \Gamma)$.
\end{lem}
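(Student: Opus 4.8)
The plan is to construct the required isomorphism directly from the two given bundle equivalences. By Definition \ref{equiv.bundle}, the hypothesis supplies graph isomorphisms $\Phi_i : X_i \longrightarrow Y_i$ for $i = 1,2$ satisfying $q_i \circ \Phi_i = p_i$. I would then define $\Psi : X_1 \boxplus X_2 \longrightarrow Y_1 \boxplus Y_2$ on vertices by $\Psi(x,y) = (\Phi_1(x), \Phi_2(y))$ and verify that $\Psi$ is an equivalence of $F_1 \square F_2$-graph bundles over $\mathrm{id}_\Gamma$, namely that it is a graph isomorphism making the equivalence square commute.

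The first step is well-posedness. By Theorem \ref{2ndmaintheorem} together with Definition \ref{pullback} applied to $p_1^*(X_2, p_2, \Gamma)$, the vertices of $X_1 \boxplus X_2$ are precisely the pairs $(x,y) \in V_{X_1} \times V_{X_2}$ with $p_1(x) = p_2(y)$. For such a pair, $q_1(\Phi_1(x)) = p_1(x) = p_2(y) = q_2(\Phi_2(y))$, so $(\Phi_1(x), \Phi_2(y))$ lies in $V_{Y_1 \boxplus Y_2}$. The same computation run with $\Phi_1^{-1}, \Phi_2^{-1}$ shows that $(\Phi_1^{-1}, \Phi_2^{-1})$ is a well-defined two-sided inverse, so $\Psi$ is a bijection on vertices.

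The second, and most delicate, step is to check that $\Psi$ preserves adjacency across the three edge types of Definition \ref{pullback}. For type I ($x = x'$, $y \sim y'$) and type III ($x \sim x'$, $p_1(x) \sim p_1(x')$, $y \sim y'$), the intra-fibre relations $x \sim x'$ and $y \sim y'$ transfer because $\Phi_1, \Phi_2$ are isomorphisms, hence edge-preserving. The point requiring care is the base-level condition distinguishing the types: for type II one needs $p_1(x) = p_1(x')$ to become $q_1(\Phi_1(x)) = q_1(\Phi_1(x'))$, and for type III one needs $p_1(x) \sim p_1(x')$ to become $q_1(\Phi_1(x)) \sim q_1(\Phi_1(x'))$. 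Both follow at once from the identity $q_1 \circ \Phi_1 = p_1$, and since the $\Phi_i$ are isomorphisms the converse implications hold as well, so $\Psi$ and its inverse each preserve the edge set. This is where the whole argument lives; the rest is bookkeeping.

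Finally I would verify the diagram. The projection of the subdirect product is $(p_1, p_2) = p_1 \circ p_1^* p_2$, which sends $(x,y) \mapsto p_1(x)$, and likewise $(q_1, q_2)(y_1, y_2) = q_1(y_1)$. Hence $(q_1,q_2)(\Psi(x,y)) = q_1(\Phi_1(x)) = p_1(x) = (p_1,p_2)(x,y)$, so $(q_1,q_2) \circ \Psi = (p_1,p_2)$ and the equivalence square over $\mathrm{id}_\Gamma$ commutes, as demanded by Definition \ref{equiv.bundle}. The main obstacle, as noted, is purely the type-II/type-III bookkeeping; there is no genuine difficulty once one observes that the relations $q_i \circ \Phi_i = p_i$ force the two equivalences to respect fibres over $\Gamma$ simultaneously, which is exactly what the coordinatewise map $\Psi$ needs.
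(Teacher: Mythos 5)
Your proof is correct and takes essentially the same approach as the paper: the paper likewise defines the coordinatewise map $\Phi_1 \boxplus \Phi_2(x_1,x_2) = (\Phi_1(x_1), \Phi_2(x_2))$ and asserts that it preserves the types of edges. You simply carry out in full the verification (well-posedness on vertices via $q_i \circ \Phi_i = p_i$, the type I/II/III edge bookkeeping, and commutativity of the equivalence square) that the paper dismisses as ``easily checked''.
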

\begin{proof}
Denote by $\Phi_i$ be the bundle isomorphism between $X_i$ and $Y_i$. Then the required isomorphism is $\Phi_1 \boxplus \Phi_2: X_1  \ \boxplus  \ X_2 \longrightarrow Y_1  \ \boxplus  \ Y_2$ defined as $\Phi_1 \boxplus \Phi_2(x_1, x_2) = (\Phi_1(x_1), \Phi_2(x_2))$. Indeed it can be easily checked that  $\Phi_1 \boxplus \Phi_2$ preserves the types of edges.   
\end{proof}

We are able to show a result of invariance for subdirect product of graph bundles. In particular it is proved that the pullback commute with the subdirect product.

\begin{thm}\label{sumstab}
Let $(X_i, p_i, \Gamma)$ be two $F_i$-graph bundles over the same graph $\Gamma$ and  $f: \Gamma' \longrightarrow \Gamma$  a morphism of graphs. Then
\begin{equation}
f^*(X_1 \boxplus X_2, (p_1,p_2), \Gamma) \cong f^*(X_1, p_1, \Gamma) \boxplus f^*(X_2, p_2, \Gamma).
\end{equation}
\end{thm}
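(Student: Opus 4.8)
The plan is to exhibit an explicit equivalence of graph bundles over $\Gamma'$, since both sides are $F_1 \square F_2$-graph bundles on the base $\Gamma'$ by Theorem \ref{2ndmaintheorem} and Definition \ref{sub.dir.prod.}. First I would unfold both total spaces down to their vertex sets. On the left, $f^*(X_1 \boxplus X_2)$ consists of pairs $(v, (x_1, x_2))$ with $v \in V_{\Gamma'}$, $(x_1,x_2) \in V_{X_1 \boxplus X_2}$ and $f(v) = (p_1,p_2)(x_1,x_2)$; unwinding the inner pullback (where $(x_1,x_2)$ already satisfies $p_1(x_1)=p_2(x_2)$ by Definition \ref{pullback}) this is exactly the set
\[
W = \{(v, x_1, x_2) \in V_{\Gamma'} \times V_{X_1} \times V_{X_2} \mid f(v) = p_1(x_1) = p_2(x_2)\}.
\]
On the right, a vertex of $f^*X_1 \boxplus f^*X_2 = (f^*p_1)^*(f^*X_2, f^*p_2, \Gamma')$ is a pair $((v,x_1),(w,x_2))$ with $(v,x_1) \in V_{f^*X_1}$, $(w,x_2)\in V_{f^*X_2}$ and $f^*p_1(v,x_1)=f^*p_2(w,x_2)$, i.e. $v=w$; since the defining conditions read $f(v)=p_1(x_1)$ and $f(w)=p_2(x_2)$, the right vertex set is again canonically $W$. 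This produces the candidate map $\Psi \colon (v,x_1,x_2) \mapsto ((v,x_1),(v,x_2))$, a bijection by construction.

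Next I would verify that $\Psi$ is an isomorphism of graphs by comparing the adjacency relations, each obtained by applying Definition \ref{pullback} twice. On the left the edges split according to an outer type (from the pullback along $f$) times an inner type (from the pullback along $p_1$ defining $\boxplus$); on the right according to the outer pullback along $f^*p_1$ times the inner pullback along $f$ inside each $f^*X_i$. The decisive observation, which I expect to be the crux, is that the membership constraint $f(v)=p_1(x_1)=p_2(x_2)$ built into $W$ forces rigid compatibilities: whenever $v=v'$ one gets $p_i(x_i)=p_i(x_i')$ automatically, and whenever $x_i=x_i'$ one gets $f(v)=f(v')$. These identities eliminate every \emph{a priori} possible combination that would mix an equality $f(v)=f(v')$ with an adjacency $f(v)\sim f(v')$ (and likewise for the $p_i$), so that on each side only four genuinely nonempty edge classes survive: (i) $v=v'$, $x_1=x_1'$, $x_2\sim x_2'$; (ii) $v=v'$, $x_1\sim x_1'$, $x_2=x_2'$; (iii) $v\sim v'$, $f(v)=f(v')$, $x_1=x_1'$, $x_2=x_2'$; (iv) $v\sim v'$, $f(v)\sim f(v')$, $x_1\sim x_1'$, $x_2\sim x_2'$. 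Matching these four classes across the two sides then shows that $\Psi$ preserves and reflects adjacency.

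Finally I would check that $\Psi$ commutes with the two projections to $\Gamma'$; this is routine, since both the left projection $f^*(p_1,p_2)$ and the right projection $(f^*p_1,f^*p_2)$ send their vertex to the $\Gamma'$-coordinate $v$, so that the square of Definition \ref{equiv.bundle} closes with the identity on $\Gamma'$. In particular $\Psi$ is an equivalence, which is stronger than the asserted isomorphism.

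A cleaner conceptual route is also available, and I would mention it. Using Remark \ref{exem0} together with the equivalence-invariance of both operations (Proposition \ref{lemma0} and Lemma \ref{stab}), one may assume $X_i = \Gamma \times_{\phi_i} F_i$ for $F_i$-voltage functions $\phi_i$. After recording that $X_1 \boxplus X_2$ is induced by the $(F_1 \square F_2)$-voltage function $(\phi_1 \boxplus \phi_2)(e) = (\phi_1(e), \phi_2(e)) \in \mathrm{Aut}(F_1)\times\mathrm{Aut}(F_2) \subseteq \mathrm{Aut}(F_1 \square F_2)$, the whole statement collapses to the identity $f^*(\phi_1 \boxplus \phi_2) = (f^*\phi_1) \boxplus (f^*\phi_2)$, which follows at once by comparing the two cases of \eqref{defn0} edge by edge. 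The obstacle in this variant is not the final check but the auxiliary lemma describing $\boxplus$ through $\phi_1 \boxplus \phi_2$, which is not established in the excerpt and would have to be proved first.
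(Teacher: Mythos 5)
Your proposal is correct and follows essentially the same route as the paper: the same canonical identification of both vertex sets with $\{(v,x_1,x_2) \mid f(v)=p_1(x_1)=p_2(x_2)\}$, the same map $(v,x_1,x_2) \mapsto ((v,x_1),(v,x_2))$, and the same case analysis of edge types under Definition \ref{pullback}. The only difference is in the finishing step: the paper checks just that edges map to edges and then invokes an edge count (two bundles with the same fiber and base have equally many edges), whereas you match the four nonempty edge classes on both sides directly, which is a slightly more self-contained way of seeing that the map also reflects adjacency.
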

\begin{proof}
The vertex set of $f^*(X_1 \boxplus X_2)$ is
\begin{equation}
\{(v,x, y) \in V_{\Gamma'}\times V_{X_1} \times V_{X_2} \mid f(v)= p_1(x)= p_2(y)\}.    
\end{equation}
On the other hand, the vertex set of $f^*X_1 \boxplus f^*X_2$ is
\begin{equation}
\{(v,x, v, y) \in V_{\Gamma'} \times V_{X_1} \times V_{\Gamma'} \times V_{X_2} \mid f(v)= p_1(x)= p_2(y)\}.    
\end{equation}
Let $\Phi$ be the map defined for each vertex in $f^*(X_1 \boxplus X_2)$ by \begin{equation}\Phi(v,x, y) = (v,x, v, y).\end{equation} Clearly $\Phi$ commutes with the projections: so, if it is an isomorphism of graphs, then it is also an isomorphism of graph bundles. Since $f^*(X_1  \ \boxplus \  X_2, (p_1,p_2), \Gamma)$ and $f^*(X_1, p_1, \Gamma) \boxplus f^*(X_2, p_2, \Gamma)$ are both $F_1  \ \square  \ F_2$-graph bundle on the same base $\Gamma$, it is sufficient to show that $\Phi$ is a morphism of graph which preserves the edges, indeed two graph bundles with the same fiber and the same base also have the same number of edges. This is a consequence of the fact that a graph bundle is locally a Cartesian product. Let's argue according to the types of the edge as per Definition \ref{pullback}.

\begin{enumerate}
    \item[(1).] 
Let $(v,x, y)\sim(v,x',y')$. These vertices form an edge of type $I$ for $f^*(X_1 \boxplus X_2)$. In particular either $x \sim x'$ and $y=y'$, or $x = x'$ and $y \sim y'$. In the first case $\Phi(v,x, y) = (v,x, v, y)\sim(v,x',v, y')= \Phi(v,x', y')$ is an edge of type II for $f^*(X_1, p_1, \Gamma)  \ \boxplus \ f^*(X_2, p_2, \Gamma)$. In the second case, it is an edge of type I.

    \item[(2).] 
Let $(v,x, y)\sim(w,x,y)$ be an edge of type II. Then $\Phi(v,x, y) = (v,x, v, y)\sim(w,x,w, y)= \Phi(w,x, y)$ is also an edge of type II for $f^*(X_1, p_1, \Gamma)  \ \boxplus  \ f^*(X_2, p_2, \Gamma)$.

    \item[(3).] 
Let $(v,x, y)\sim(w,x',y')$ be an edge of type III. Then we have $f(v) \sim f(w)$ and $(x,y) \sim (x',y')$.  Since $(p_1,p_2)(x,y)= f(v)$ and $(p_1,p_2)(x',y')= f(w)$, we find that $x \sim x'$ and $y\sim y'$. So the vertices $(v,x) \sim (w,x')$ in $f^*X_1$, $(v,y) \sim (w,y')$ in $f^*X_2$ and $\Phi(v,x, y) = (v,x, v, y)\sim(w,x',w, y')= \Phi(w,x', y')$ is an edge of type III in $f^*(X_1, p_1, \Gamma)  \ \boxplus  \ f^*(X_2, p_2, \Gamma)$. 
\end{enumerate}
The result follows.
\end{proof}
Another interesting property which appears in the present investigations is illustrated below.
\begin{prop}[Universal Property of Subdirect Product of Graph Bundles]\label{3rdmaintheorem}
Let $(X_i, p_i, \Gamma)$ be a $F_i$-graph bundle for $i=1,2$. Let $Y$ be a graph and assume the existence of two morphisms $\alpha_i: Y \longrightarrow X_i$ such that $p_1 \circ \alpha_1 = p_2 \circ \alpha_2$ is a morphism which preserves the edges. Then there is a morphism $\alpha_1 \boxplus \alpha_2: Y \longrightarrow X_{1}\boxplus X_2$ such that the following diagram commutes.
\begin{equation}
\adjustbox{scale=1.5,center}{
\begin{tikzpicture}
\node (E) at (0,0) {};
\node[left=of E] (S) {$Y$};
\node[below=of E] (P) {$X_1 \boxplus X_2$};
\node[right=of P] (Q) {$X_2$};
\node[below=of P] (W) {$X_1$};
\node[below=of Q] (G) {$\Gamma$};

\draw[->] (S)--(P) node [midway,right] {$\alpha_1 \boxplus \alpha_2$};
\draw[->] (P)--(Q) node [midway,above] {$\mathcal{Q}_1$};
\draw[->] (P)--(W) node [midway,right] {$q_1^*q_2$};
\draw[->] (W)--(G) node [midway,below ] {$q_1$};
\draw[->] (Q)--(G) node [midway, right] {$q_2$};
\draw[->,bend right] (S) to node [below left] {$\alpha_1$} (W) ;
\draw[->,bend left] (S) to node [above] {$\alpha_2$} (Q) ;
\end{tikzpicture}}
\end{equation}
\end{prop}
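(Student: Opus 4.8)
The plan is to define $\alpha_1 \boxplus \alpha_2$ on vertices by the obvious diagonal formula and then to check, in order, that it is well-defined, that it is a graph morphism, and finally that the triangles of the diagram commute. Recall from Definition \ref{pullback} that the total space $X_1 \boxplus X_2 = p_1^* X_2$ has vertex set $\{(x,y) \in V_{X_1} \times V_{X_2} \mid p_1(x) = p_2(y)\}$, that the projection $q_1^* q_2$ is the map $(x,y) \mapsto x$, and that the induced map $\mathcal{Q}_1$ of Theorem \ref{thm0} is $(x,y) \mapsto y$. First I would set $(\alpha_1 \boxplus \alpha_2)(y) = (\alpha_1(y), \alpha_2(y))$ for every vertex $y$ of $Y$; since $p_1(\alpha_1(y)) = p_2(\alpha_2(y))$ by hypothesis, the pair $(\alpha_1(y), \alpha_2(y))$ satisfies the defining condition of $V_{X_1 \boxplus X_2}$, so the formula lands in the correct vertex set.

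The heart of the argument is to verify that $\alpha_1 \boxplus \alpha_2$ is a morphism of graphs, and this is where the hypothesis that $g := p_1 \circ \alpha_1 = p_2 \circ \alpha_2$ preserves the edges becomes essential. Given $y \sim y'$ in $Y$, the edge-preserving property yields a genuine adjacency $g(y) \sim g(y')$ in $\Gamma$, and in particular $g(y) \neq g(y')$. Reading this through $g = p_1 \circ \alpha_1$ forces $\alpha_1(y) \neq \alpha_1(y')$, and since $\alpha_1$ is a graph morphism we then obtain the strict adjacency $\alpha_1(y) \sim \alpha_1(y')$; the identical reasoning through $g = p_2 \circ \alpha_2$ gives $\alpha_2(y) \sim \alpha_2(y')$. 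Thus $\alpha_1(y) \sim \alpha_1(y')$, $\alpha_2(y) \sim \alpha_2(y')$, and $p_1(\alpha_1(y)) = g(y) \sim g(y') = p_1(\alpha_1(y'))$ hold simultaneously, which is precisely the condition defining a type III edge in Definition \ref{pullback}. Hence $(\alpha_1 \boxplus \alpha_2)(y) \sim (\alpha_1 \boxplus \alpha_2)(y')$, so $\alpha_1 \boxplus \alpha_2$ is a morphism of graphs, indeed one preserving edges.

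Finally, commutativity is immediate from the coordinate description: $q_1^* q_2 \circ (\alpha_1 \boxplus \alpha_2) = \alpha_1$ and $\mathcal{Q}_1 \circ (\alpha_1 \boxplus \alpha_2) = \alpha_2$ by reading off the two coordinates, while the two outer triangles commute because $p_1 \circ \alpha_1 = p_2 \circ \alpha_2 = g$ by assumption; one may further observe that any morphism making both inner triangles commute must agree with $\alpha_1 \boxplus \alpha_2$ in each coordinate, so it is in fact unique. I expect the only genuine obstacle to lie in the morphism check of the second paragraph: one must resist simply declaring the diagonal map a morphism, since without the edge-preserving hypothesis the degenerate case $g(y) = g(y')$ could arise, in which one would have $\alpha_1(y) \sim \alpha_1(y')$ and $\alpha_2(y) \sim \alpha_2(y')$ together with $p_1(\alpha_1(y)) = p_1(\alpha_1(y'))$, a configuration matching none of the edge types I, II or III, so that the diagonal map would fail to be a morphism. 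The edge-preserving hypothesis is exactly what rules this out and steers every edge of $Y$ into a type III edge.
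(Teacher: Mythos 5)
Your proposal is correct and follows essentially the same route as the paper's proof: define $(\alpha_1 \boxplus \alpha_2)(y) = (\alpha_1(y), \alpha_2(y))$, use the edge-preserving hypothesis on $p_1 \circ \alpha_1 = p_2 \circ \alpha_2$ to show every edge of $Y$ lands on a type III edge of $p_1^* X_2$, and read off commutativity coordinatewise. Your argument is in fact slightly more complete than the paper's, since you explicitly justify the step the paper only asserts (that $\alpha_1$ and $\alpha_2$ themselves preserve edges, via $g(y) \sim g(y')$ forcing $\alpha_i(y) \neq \alpha_i(y')$), and you add an unclaimed but harmless uniqueness observation.
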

\begin{proof}
Let $\alpha: Y \longrightarrow X_1  \ \boxplus  \ X_2$ be the function defined by $\alpha_1   \boxplus   \alpha_2(y) = (\alpha_1(y), \alpha_2(y))$. This function clearly makes the diagram commutative, so it remains to prove that $\alpha_1  \ \boxplus  \ \alpha_2$ is a morphism of graphs. Since $p_1 \circ \alpha_1 = p_2 \circ \alpha_2$ preserves the edges, we have that also $\alpha_1$ and $\alpha_2$ preserve them. This means that, for each $y\sim z$ in $Y$, $\alpha_1(z) \sim \alpha_1(y)$, $\alpha_2(z) \sim \alpha_2(y)$, and $p_1(\alpha_1(z)) \sim p_1(\alpha_1(y))$. This is an edge of type III considering $X_1  \ \boxplus  \ X_2 = p_1^*X_2$.
\end{proof}

In the proof of Theorem \ref{3rdmaintheorem}, it is crucial that $p_1 \circ \alpha_1 = p_2 \circ \alpha_2$ preserves the edges, since otherwise the statement fails. For example, if $\Gamma$ is a point and  $X_1 = X_2 = K_2$, then $X_1  \ \boxplus  \ X_2 = K_2  \ \square  \ K_2$. Assume that $Y = K_2$ and  $\alpha_1 = \alpha_2 = \mathrm{id}_{K_2}$. Then there is no morphism $\alpha_1   \boxplus   \alpha_2 \ :  \ Y \longrightarrow X_1 \  \boxplus \ X_2$ such that $p_1^*p_2 \circ \alpha_1   \boxplus   \alpha_2 = \alpha_1$ and   $p_2^*p_1 \circ \alpha_1   \boxplus   \alpha_2 = \alpha_2.$

\begin{rem}\label{crosssectionnormal} The notion of  \textit{ section} (also called \textit{crossed section} by some authors) is another common notion in topological algebra, differential geometry and algebraic topology. This can be given in general categories: for instance, if we look at the category of topological groups (with corresponding morphisms) and at \cite[Definition 10.9]{hofmor}, then we  consider a topological group $G$ acting on a Hausdorff space $X$ and  a continuous function $\sigma : X/G \to X$ is a section if  $\mathrm{pr}_{G,X} \circ \sigma =\mathrm{id}_{X/G}$, where $\mathrm{id}_{X/G}$ is the identity map of the orbit space $X/G$ and  $\mathrm{pr}_{G,X}$ is the canonical projection of $X$ onto $X/G$. 
From a purely topological point of view, a section of a fibration $(X, p, M)$ is a continuous function $\sigma: U \longrightarrow X$ where $U$ is an open subset of $M$ such that $p \circ \sigma = id_U$.
\\Something similar can be formulated in the context of graph bundles: consider an $F$-graph bundle $(X, p, \Gamma)$ and a \textit{ section}  is a morphism of graphs $\beta: Y \longrightarrow X$, where $Y=(V_Y, E_Y)$ is a graph such that $V_Y \subseteq V_\Gamma$ and $E_Y \subseteq E_\Gamma$ and $p \circ \beta = \mathrm{id}_Y$.

A \textit{global section} is a section such that $Y = \Gamma$. Note that a section  preserves the edges and this means that Theorem \ref{3rdmaintheorem} holds for sections which are defined on the same $Y$. In particular, if $\alpha_1$ and $\alpha_2$ are global sections in Theorem \ref{3rdmaintheorem}, so is $\alpha_1 \boxplus \alpha_2$.
\end{rem}

\subsection{Adjacency matrix of the subdirect product of graphs}
Following the strategy of proof in Lemma \ref{th.kwakk}, given an $F$-graph bundle $(X, p, \Gamma)$, we fix for each vertex $v \in V_\Gamma$ an isomorphism $\sigma_v: p^{-1}(v) \longrightarrow F$ from the fiber to $F$. Therefore for two adjacent vertices $v \sim w$ in $X$, the isomorphism $\psi_{vw}: p^{-1}(v) \longrightarrow p^{-1}(w)$ can be seen as an isomorphism of $\mathrm{Aut}(F)$ up to composing with $\sigma_w$ and $\sigma_v^{-1}$. 
Let $(X_1, p_1, \Gamma)$ and $(X_2, p_2, \Gamma)$ be two graph bundles with fibers $F_1$ and $F_2$. For each $v$ in $V_{\Gamma}$, we fix 
\begin{equation}
(\sigma_v,\sigma_w): p_1^{-1}(v)  \ \square  \ p_2^{-1}(v) \longrightarrow F_1 \  \square \  F_2.
\end{equation}
From  \eqref{couple} in the proof of Theorem \ref{2ndmaintheorem}, we have for two adjacent vertices $v, w \in V_\Gamma$ the isomorphism of graphs \begin{equation}\psi_{vw} = (\psi^1_{vw}, \psi^2_{vw}),\end{equation} where $\psi^i_{vw}$ is the isomorphism in $X_i$ for $i=1,2$. Note that, if we fix some order on $V_{F_i}$ and  the lexicographical order on $V_{F_1  \ \square  \ F_2}$, then we get the permutation matrix
\begin{equation}\label{perm.sp.}
\mathrm{perm}(\psi^1_{vw}, \psi^2_{vw}) = \mathrm{perm}(\psi^1_{vw}) \otimes \mathrm{perm}(\psi^2_{vw}).
\end{equation}
This fact is a consequence of the notion of Kronecker product of permutations (see \cite{spectrabook, crs, wang}).

\begin{thm}[Adjacency Matrix for Subdirect Products of Graph Bundles]\label{4thmaintheorem}
Let $(X_i, p_i, \Gamma)$ be  $F_i$-graph bundles for $i=1,2$ and for each $\psi^i \in \mathrm{Aut}(F_i)$ denote by $\mathrm{perm}(\psi^i)$ the permutation matrix of $\psi^i$. Then 
\begin{equation}
A_{X_1 \boxplus X_2} = {\underset{\psi^2 \in \mathrm{Aut}(F_2)}{\underset{\psi^1 \in \mathrm{Aut}(F_1)}\sum}} A_{\Gamma}(\psi^1, \psi^2) \otimes \mathrm{perm}(\psi^1) \otimes \mathrm{perm}(\psi^2) + I_{\vert V_{\Gamma} \vert} \otimes A_{F_1} \otimes I_{\vert V_{F_2} \vert } + I_{\vert V_{\Gamma} \vert} \otimes I_{\vert V_{F_1} \vert } \otimes A_{F_2}.
\end{equation}
\end{thm}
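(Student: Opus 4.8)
The plan is to view $(X_1 \boxplus X_2, (p_1,p_2), \Gamma)$ as a single $F_1 \ \square \ F_2$-graph bundle---which is legitimate by Theorem \ref{2ndmaintheorem}---and then apply Lemma \ref{th.kwakk} directly to it. Since the adjacency matrix depends only on the equivalence class of a bundle up to a reordering of the vertices, we may assume the subdirect product is presented in the form $\Gamma \times_\phi (F_1 \ \square \ F_2)$ for a suitable $(F_1 \ \square \ F_2)$-voltage function $\phi$. Lemma \ref{th.kwakk} then yields
\begin{equation*}
A_{X_1 \boxplus X_2} = \sum_{\psi \in \mathrm{Aut}(F_1 \ \square \ F_2)} A_\Gamma(\psi) \otimes \mathrm{perm}(\psi) + I_{\vert V_\Gamma \vert} \otimes A_{F_1 \ \square \ F_2},
\end{equation*}
and the whole argument reduces to rewriting each of the two summands.

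For the first summand, the crucial observation is that the voltage function $\phi$ of the subdirect product takes values only among the product automorphisms. Indeed, equation \eqref{couple} in the proof of Theorem \ref{2ndmaintheorem} shows that each induced fiber isomorphism factors as $\psi_{vw} = (\psi^1_{vw}, \psi^2_{vw})$ with $\psi^i_{vw} \in \mathrm{Aut}(F_i)$; hence $\phi$ takes values in the subgroup $\mathrm{Aut}(F_1) \times \mathrm{Aut}(F_2) \subseteq \mathrm{Aut}(F_1 \ \square \ F_2)$, and $A_\Gamma(\psi) = 0$ for every $\psi$ not of the form $(\psi^1, \psi^2)$. The sum over $\mathrm{Aut}(F_1 \ \square \ F_2)$ therefore collapses to a double sum over pairs $(\psi^1, \psi^2)$, and invoking \eqref{perm.sp.}, namely $\mathrm{perm}(\psi^1, \psi^2) = \mathrm{perm}(\psi^1) \otimes \mathrm{perm}(\psi^2)$, turns it into exactly the double-indexed sum in the statement. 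For the second summand it suffices to substitute the Cartesian-product formula \eqref{agammaproduct1}, which gives $A_{F_1 \ \square \ F_2} = A_{F_1} \otimes I_{\vert V_{F_2}\vert} + I_{\vert V_{F_1}\vert} \otimes A_{F_2}$, and to distribute $I_{\vert V_\Gamma\vert} \otimes (-)$ over the sum; this produces precisely the last two terms of the claimed identity.

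The main obstacle is the bookkeeping of orders, so that the three tensor factors line up correctly. One must fix orders on $V_{F_1}$ and $V_{F_2}$, take the induced lexicographical order on $V_{F_1 \ \square \ F_2} = V_{F_1} \times V_{F_2}$, and verify that the permutation matrix of $(\psi^1, \psi^2)$ acting on this ordered set is the Kronecker product $\mathrm{perm}(\psi^1) \otimes \mathrm{perm}(\psi^2)$---this is the content of \eqref{perm.sp.}, and it is the step where the identification $\mathrm{Aut}(F_1) \times \mathrm{Aut}(F_2) \hookrightarrow \mathrm{Aut}(F_1 \ \square \ F_2)$ must be treated with care. Once these orderings are fixed and shown to be compatible with the lexicographical order on $V_\Gamma \times V_{F_1} \times V_{F_2}$ underlying $A_{X_1 \boxplus X_2}$, the two rewritings above combine to give the stated formula.
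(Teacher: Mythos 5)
Your proposal is correct and follows essentially the same route as the paper's proof: both apply Lemma \ref{th.kwakk} to the subdirect product viewed (via Theorem \ref{2ndmaintheorem}) as an $F_1 \ \square \ F_2$-graph bundle, use \eqref{perm.sp.} to split the permutation matrices as Kronecker products, and finish by expanding $A_{F_1 \ \square \ F_2}$ with the Cartesian-product formula. The paper's proof is merely terser---it states the collapsed double sum directly---whereas you make explicit the intermediate step that the voltage function only takes values in $\mathrm{Aut}(F_1) \times \mathrm{Aut}(F_2)$, so the sum over $\mathrm{Aut}(F_1 \ \square \ F_2)$ degenerates; this is a useful clarification but not a different argument.
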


\begin{proof}
From \eqref{perm.sp.} and Lemma \ref{th.kwakk}, we find that
 \begin{equation}
A_{X_1 \boxplus X_2} = {\underset{\psi^2 \in \mathrm{Aut}(F_2)}{\underset{\psi^1 \in \mathrm{Aut}(F_1)}\sum}} A_{\Gamma}(\psi^1, \psi^2) \otimes \mathrm{perm}(\psi^1) \otimes \mathrm{perm}(\psi^2)+ I_{\vert V_{\Gamma} \vert} \otimes A_{F_1 \square F_2}.
\end{equation}
Then, in order to conclude, it is sufficient to recall that 
\begin{equation}
A_{F_1 \square F_2} = A_{F_1} \otimes I_{\vert V_{F_2} \vert }  + I_{\vert V_{F_1} \vert } \otimes A_{F_2}.
\end{equation}
\end{proof}
\subsection{K-theory from graph bundles}
In next lemmas some properties about the direct sum of graph bundles are proved: in particular it will be proved that the set of all the equivalence classes of graph bundles over $\Gamma$ endowed with the subdirect sum is an \textit{abelian monoid}, i.e. it is commutative, associative and there exists a neutral element.
These properties are the same as those satisfied by vector bundles over topological spaces. We will see how these properties will allow us to define the concept of network $K$-theory.
\begin{thm}[Monoidal Structure of Subdirect Product of Graph Bundles]\label{5thmaintheorem}
Let $(X_i, p_i, \Gamma)$ be an $F_i$-graph bundle with same base $\Gamma$ where $i=1,2,3$ on the same base $\Gamma$. Then the following properties holds
\begin{itemize}
    \item[{\rm (1).}] Commutativity: 
    \begin{equation*}
        (X_1 \boxplus X_2, (p_1,p_2), \Gamma) \cong (X_2 \boxplus X_1, (p_2,p_1), \Gamma),
    \end{equation*}
    \item[{\rm (2).}] Associativity: 
    \begin{equation*}
        ([X_1 \boxplus X_2] \boxplus X_3, ((p_1,p_2), p_3), \Gamma) \cong (X_1 \boxplus [X_2 \boxplus X_3], (p_1,(p_2,p_3)), \Gamma),
    \end{equation*}
    \item[{\rm (3).}] Existence of a neutral element: 
    \begin{equation*}
        (X_1, p_1, \Gamma) \boxplus (\Gamma, id, \Gamma) = (X_1, p_1, \Gamma).
    \end{equation*}
\end{itemize}
\end{thm}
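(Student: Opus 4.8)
The plan is to prove each of the three properties by exhibiting an explicit bijection on vertex sets and checking that it is an equivalence of graph bundles in the sense of Definition \ref{equiv.bundle}, i.e.\ a graph isomorphism commuting with the projections onto $\Gamma$. By Theorem \ref{2ndmaintheorem} all the bundles appearing in the statement are $F_1 \square F_2$- (respectively $F_1 \square F_2 \square F_3$-) graph bundles over $\Gamma$, so in each case the two sides already share the same base and fiber, and what remains is to produce the correct isomorphism of total spaces. The recurring technique is to unwind Definition \ref{pullback}: since $X_1 \boxplus X_2 = p_1^*X_2$ has vertex set $\{(x_1,x_2) \in V_{X_1} \times V_{X_2} \mid p_1(x_1) = p_2(x_2)\}$, with adjacency governed by edges of type I, II or III, I would first record the explicit coordinate description of these three edge types, since all verifications reduce to tracking how a given map permutes them.

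For commutativity I would take the swap $\Psi(x_1,x_2) = (x_2,x_1)$. The vertex constraint $p_1(x_1) = p_2(x_2)$ makes it a well-defined bijection $X_1 \boxplus X_2 \to X_2 \boxplus X_1$ intertwining $(p_1,p_2)$ with $(p_2,p_1)$, so the only point is preservation of adjacency. Inspecting Definition \ref{pullback} one sees that $\Psi$ carries a type I edge of $X_1 \boxplus X_2$ (where $x_1 = x_1'$ and $x_2 \sim x_2'$) to a type II edge of $X_2 \boxplus X_1$, a type II edge to a type I edge, and a type III edge to a type III edge; the equalities among the images in $\Gamma$ guarantee that the side conditions transport correctly. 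Hence $\Psi$ is the desired equivalence.

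The heart of the argument is associativity, and this is the step I expect to be the main obstacle. I would use the reassociation map $\Theta\big((x_1,x_2),x_3\big) = \big(x_1,(x_2,x_3)\big)$, noting that both iterated total spaces have, after forgetting the bracketing, the same underlying vertex set $\{(x_1,x_2,x_3) \mid p_1(x_1) = p_2(x_2) = p_3(x_3)\}$. The difficulty is that the edges of $[X_1 \boxplus X_2] \boxplus X_3$ arise from a \emph{doubly} nested application of Definition \ref{pullback}, so a type II or III edge of the outer pullback conceals an edge of $X_1 \boxplus X_2$ that must itself be resolved into its own type I, II or III. The key is to perform this expansion and show that, writing $w = p_1(x_1)$ and $w' = p_1(x_1')$, every edge of the triple product admits an \emph{intrinsic}, bracketing-independent description: if $w = w'$ then exactly one of the three coordinates moves, and does so inside its fiber (this absorbs the outer type I together with the Cartesian splitting of a within-fiber edge of $X_1 \boxplus X_2$ furnished by Step 1 of Theorem \ref{2ndmaintheorem}), whereas if $w \sim w'$ then all three coordinates move horizontally via the respective bundle isomorphisms $\psi^i_{w,w'}$. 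Since this characterization is manifestly symmetric in the three indices, the identical description is obtained for $X_1 \boxplus [X_2 \boxplus X_3]$, and $\Theta$ therefore preserves adjacency in both directions while commuting with the projections.

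For the neutral element I would observe that $(\Gamma,\mathrm{id},\Gamma)$ has one-vertex fibers $\mathrm{id}^{-1}(v) = \{v\}$, so the vertex set of $X_1 \boxplus \Gamma = p_1^*\Gamma$ is $\{(x_1,v) \mid p_1(x_1) = v\}$, in which the second coordinate is forced to equal $p_1(x_1)$. The first projection $(x_1,p_1(x_1)) \mapsto x_1$ is then a bijection onto $V_{X_1}$, and running through Definition \ref{pullback} shows that type I edges cannot occur (they would force $v \sim v'$ with $v = v'$), type II edges recover exactly the within-fiber edges of $X_1$, and type III edges recover exactly the horizontal edges; since $p_1$ is a graph morphism in the sense of Definition \ref{morphism}, every edge of $X_1$ falls into one of these two families, so the projection is an equivalence of graph bundles. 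Finally, by Lemma \ref{stab} each of these isomorphisms is compatible with the equivalence relation on bundles, so all three statements descend to the level of equivalence classes, yielding the claimed abelian monoid structure.
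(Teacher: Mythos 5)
Your proposal is correct, and it uses the same underlying data as the paper: the swap map for commutativity, the reassociation (essentially the identity on the common vertex set $\{(x_1,x_2,x_3) \mid p_1(x_1)=p_2(x_2)=p_3(x_3)\}$) for associativity, the projection $(x,v)\mapsto x$ for the neutral element, and in each case a case analysis on the three edge types of Definition \ref{pullback}. The one genuine difference is how you close the argument that these maps are isomorphisms rather than mere edge-preserving morphisms. The paper checks only \emph{one} direction of edge preservation and then invokes a counting argument: any $F$-graph bundle over $\Gamma$ has exactly $\vert E_{\Gamma \,\square\, F}\vert$ edges (read off from the adjacency-matrix formula), so the two sides, being bundles with the same base and the same fiber, have equally many edges, and an injective edge-preserving map between them must be surjective on edges. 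You instead prove a \emph{bracketing-independent characterization} of the edges of the triple product --- either exactly one coordinate moves inside its fiber with the base point fixed, or all three coordinates move horizontally over adjacent base points --- and observe that this description is symmetric in the three indices, so the edge sets of the two bracketings literally coincide and adjacency is preserved in both directions at once (likewise, for commutativity the inverse of the swap is again a swap, so no counting is needed). Your route is slightly longer to verify in full detail but is self-contained and arguably more transparent: it does not lean on the edge-count identity, and it yields the stronger statement that the identification is an equality of edge sets, not merely a bijection between sets of equal cardinality. Both arguments are valid; your final remark that Lemma \ref{stab} lets everything descend to equivalence classes matches the paper's subsequent use of the theorem in defining the monoid operation.
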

\begin{proof}It will be proved point by point.\\
\\
\textit{Commutativity:} Note that \begin{equation}V_{X_1 \boxplus X_2} = \{(x,y) \in V_{X_1}\times V_{X_2} \mid p_1(x)=p_2(y)\} \ \mbox{and}  \ V_{X_2 \boxplus X_1} = \{(y,x) \in V_{X_2} \times V_{X_1}\mid p_2(y)=p_1(x)\}.\end{equation} The required isomorphism  \begin{equation}
\Phi: X_1 \boxplus X_2 \longrightarrow X_2 \boxplus X_1  \ \ \mbox{is defined by}   \ \ \Phi(x,y) = (y,x).\end{equation} It is easy to show that this map preserves the edges; in particular $\{\Phi(x,y),\Phi(x',y')\}$ is an edge of type I if $\{(x,y), (x', y')\}$ is an edge of type II; it is an edge of type II if $\{(x,y), (x', y')\}$ is an edge of type I; it is an edge of type III otherwise. Since two $F$-graph bundles on the same base space have the same number of edges, then we obtain that $\Phi$ is an isomorphism. Finally $(p_2,p_1) \circ \Phi = (p_1,p_2)$ and so $\Phi$ is a morphism of graph bundles.\\
\\
\textit{Associativity:} Note that $V_{X_1 \boxplus [X_2 \boxplus X_3]} = V_{[X_1 \boxplus X_2] \boxplus X_3}$ and they are both equal to
\begin{equation}
\{(x_1, x_2, x_3) \in V_{X_1} \times V_{X_2} \times V_{X_3} \mid p_1(x_1) = p_2(x_2) = p_3(x_3)\}.
\end{equation}
The map \begin{equation}\label{identity} \mathrm{id} : V_{X_1 \boxplus [X_2 \boxplus X_3]} \longrightarrow V_{[X_1 \boxplus X_2] \boxplus X_3}  \  \ \ \mbox{is defined by} \  \ \ \mathrm{id}(x_1,x_2,x_3) = (x_1,x_2,x_3)\end{equation} and is an isomorphism of graphs. Indeed, if $(x_1, x_2, x_3)\sim (y_1, y_2, y_3)$ in $X_1 \boxplus [X_2 \boxplus X_3]$, 
\begin{enumerate}
\item[(1).] and they form an edge of type I, then $x_1 = y_1$ and $(x_2, x_3)$ and $(y_2, y_3)$ are vertices in the fiber of $p_1(x_1)$ along $(p_2,p_3)$. This is $p_{2}^{-1}(p_1(x_1))  \ \square  \ p_{3}^{-1}(p_1(x_1))$. Then are two alternatives: either $x_2 = y_2$ and $x_3 \sim y_3$, or $x_2 \sim y_2$ and $x_3 = y_3$. In the first case it is an edge of type I for $[X_1 \boxplus X_2] \boxplus X_3$, in the second case it is an edge of type II;

\item[(2).] and they form an edge of type II, then $x_2 = y_2$, $x_3 = y_3$ and $x_1 \sim y_1$ with $p_1(x_1) = p_1(y_1)$. This is an edge of type II for $[X_1 \boxplus X_2] \boxplus X_3$,

\item[(3).] and they form an edge of type III, then $x_1 \sim y_1$, $p_1(x_1) \sim p_1(y_1)$ and $(x_2,x_3) \sim (y_2, y_3)$. Note that $p_1(x_1) \sim p_1(y_1)$ implies that also $p_2(x_2) \sim p_2(y_2)$ and $p_3(x_3) \sim p_3(y_3)$ from the definition of $V_{X_1 \boxplus [X_2 \boxplus X_3]}$. This means that the edge in $X_2 \boxplus X_3$ which connects $(x_2,x_3)$ and $  (y_2, y_3)$ is of type III. Hence $x_2 \sim y_2$ and $x_3 \sim y_3$ and this implies that the edge connecting $(x_1, x_2, x_3)$ and $(y_1, y_2, y_3)$ is of type III in $[X_1 \boxplus X_2] \boxplus X_3$.
\end{enumerate}

In order to conclude that \eqref{identity} is an isomorphism of graphs, it would be necessary to prove that all the edges of $[X_1 \boxplus X_2] \boxplus X_3$ are edges on the image of $id(X_1 \boxplus [X_2 \boxplus X_3])$. However it is sufficient to observe that they have the same number of egdes. Indeed, given an $F$-graph bundle $(X, p, \Gamma)$, then $\vert E_{X} \vert = E_{\Gamma \square F}$. This fact can be deduced by observing that the adjacency matrix of $X$ has the same numbers of entries equal to $1$ of $A_{\Gamma} \otimes I_{\vert F \vert} + I_{\vert V_\Gamma\vert} \otimes A_F$. So this implies that given two bundles on the same base space and with the same fiber, then they have the same number of edges. In particular this happens for $[X_1 \boxplus X_2] \boxplus X_3$ and $X_1 \boxplus [X_2 \boxplus X_3]$. The proof of this point ends since $((p_1,p_2), p_3) \circ \mathrm{id} = (p_1,(p_2, p_3))$.\\
\\
\textit{Existence of a neutral element:} notice that the fibers of the graph bundle $(\Gamma, id_\Gamma, \Gamma)$ are isomorphic to the trivial bundle with just one vertex. This implies that, given a graph bundle $(X, p, \Gamma)$ the map $\Phi: X \ \boxplus \ \Gamma \longrightarrow X$ which is defined by $\Phi(x, v) = x$ for each $(x,v)$ in $V_{X \boxplus \Gamma}$ is the required isomorphism of graph bundles.
\end{proof}
Given  a graph $F$, we may denote the $n$-\textit{times iterated Cartesian product of} $F$ by 
\begin{equation}\label{iteration}
F^n = \underbrace{F \ \square  \  F \ \square  \  F \ \square  \  \ldots \ \square  \  F}_{n -\mbox{times}}, \mbox{ and } F^0 = (\{1\}, \emptyset).
\end{equation}

Inspired by the usual definition of topological $K$-theory of a topological space in \cite[Chapter I, \S 11]{ttd}, denote by $[X]$ be the equivalence class of the $F^n$-graph bundle $(X, p, \Gamma)$. From Lemma \ref{stab}, we are allowed to define without ambiguities an operation between equivalence classes in the following way
\begin{equation}\label{classoperation}
    [X] \boxplus [Y] := [X \boxplus Y]. 
\end{equation}
From Theorem \ref{5thmaintheorem}, the set of the equivalence classes $[X]$ with respect to \eqref{classoperation} is an abelian monoid. On an abelian monoid $(A, +)$, its \textit{Grothendieck group} $(G(A),+)$, or its \textit{group of differences}, or its \textit{group obtained by symmetrization}, is the abelian group defined on  $A$ having binary operation on equivalence classes in $A \times A$ such that $(a,b) \sim (c,d)$ if and only if there is an element $r$ in $A$ such that $a + c + r = b + d + r$. Then $G(A)$ is the set of the equivalence classes with respect to the sum and the class of the element $(a,b)$ is denoted by $a - b$. The operation is defined componentwise, i.e.:     $(a-b)+(c-d) = (a+c) - (b+d).$ The neutral element of this group is the class whose element are $(a,a)$. Using the notation of differences   $a-a = 0_{G(A)}$ for each $a$ in $A$. Moreover the inverse of the element $a-b$ is the element $b-a$ for each couple $a,b$ in $A$.

For instance, the Grothendieck group of the usual additive monoid of the natural numbers is given by the additive group of the integers, but is more interesting to note that given a topological space $M$ the Grothendieck group of the monoid of the equivalence classes of vector bundles over $M$ with respect to the direct sum is  the well known \textit{K-theory group} of $M$ (denoted by $K_0(M)$).

The role of $K_0(M)$ in algebraic topology is fundamental under several aspects, in order to classify manifolds, surfaces, polytopes and CW-complexes, see \cite{ttd}. It seems quite natural to define a $K$-theory group for a graph induced by its graph bundles.


\begin{defn}\label{nktg}
Let $\Gamma$ and $F$ be two graphs. The \textbf{K-theory group of the network} $\Gamma$ (or the \textbf{network K-theory group} of $\Gamma$)  is the group $K_0(\Gamma, F)$ defined as the Grothendieck group of the monoid of the equivalence classes of $F^n$-graph bundles over $\Gamma$ and $\boxplus$ as operation.
\end{defn}

\begin{example}
    Let $\Gamma = T$ be a tree. Then, for any $F$, its group $K_0(\Gamma, F) = \mathbb{Z}$. Indeed any graph bundle on a tree is trivial (see Remark \ref{remtree}). This means that the monoid of the classes of equivalence of vector bundle is just $\mathbb{N}$: the isomorphism is the one which sends each $F^n$-graph bundle to $n$. Therefore the Grothendieck group is $G(\mathbb{N}) = \mathbb{Z}$.
\end{example}

As it often happens in algebraic topology when we have similar situations, we look whether $K_0(\cdot, F)$ is a controvariant functor or not. In this case the answer is positive (of course from the category of graphs endowed with graph morphisms to the category of the abelian groups endowed with group homomorphisms). 

\begin{cor}\label{k.t.func}
Given the graphs $\Gamma_1,$ $ \Gamma_2$  and $F$ and the network $K$-theory groups $K_0(\Gamma_1, F)$ and $ K_0(\Gamma_2, F)$ as per Definition \ref{nktg}, for each graph morphism $f: \Gamma_1 \longrightarrow \Gamma_2$ there is a group morphism $K_0(f): K_0(\Gamma_2, F) \longrightarrow K_0(\Gamma_1, F)$ such that the following functorial properties are satisfied:
\begin{itemize}
    \item[{\rm (1).}] $K_0(f \circ g) = K_0(g) \circ K_0(f)$,
    \item[{\rm (2).}] $K_0(\mathrm{id}_\Gamma) = \mathrm{id}_{K_0(\Gamma, F)}.$
\end{itemize}
\end{cor}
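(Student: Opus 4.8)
The plan is to realize $K_0(f)$ as the Grothendieck image of the pullback operation on monoids, and then to deduce the two functorial identities directly from the functoriality of the pullback in Proposition~\ref{funct}. Write $M(\Gamma)$ for the abelian monoid of equivalence classes of $F^n$-graph bundles over $\Gamma$ (with $n \ge 0$ arbitrary) under $\boxplus$, so that $K_0(\Gamma, F) = G(M(\Gamma))$ in the notation of Definition~\ref{nktg}. Given a graph morphism $f : \Gamma_1 \longrightarrow \Gamma_2$, the pullback of Definition~\ref{pullback} carries each $F^n$-graph bundle over $\Gamma_2$ to an $F^n$-graph bundle over $\Gamma_1$, and hence defines a map $f^* : M(\Gamma_2) \longrightarrow M(\Gamma_1)$; this map is well defined on equivalence classes by Proposition~\ref{lemma0}.

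Next I would verify that $f^*$ is a morphism of abelian monoids. Compatibility with the operation is exactly Theorem~\ref{sumstab}, which yields $f^*(X_1 \boxplus X_2) \cong f^*X_1 \boxplus f^*X_2$ and therefore $f^*([X_1] \boxplus [X_2]) = f^*[X_1] \boxplus f^*[X_2]$. The neutral element of $M(\Gamma_2)$ is the class of the trivial $F^0$-bundle $(\Gamma_2, \mathrm{id}, \Gamma_2)$, and Corollary~\ref{pulltrivial} shows that its pullback is again trivial, namely $(\Gamma_1, \mathrm{id}, \Gamma_1)$, the neutral element of $M(\Gamma_1)$. Thus $f^*$ respects both the operation and the identity.

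Since the Grothendieck construction $M \mapsto G(M)$ is a functor from commutative monoids to abelian groups, the monoid morphism $f^*$ induces a unique group homomorphism $K_0(f) := G(f^*) : K_0(\Gamma_2, F) \longrightarrow K_0(\Gamma_1, F)$, acting by $K_0(f)(a - b) = f^*a - f^*b$ on formal differences. The two functorial identities are then obtained by pushing the corresponding bundle-level isomorphisms through $G$: property (2) is immediate from $\mathrm{id}^*(X, p, \Gamma) \cong (X, p, \Gamma)$, which is Proposition~\ref{funct}(2), while property (1) follows from the relation $g^* \circ f^* \cong (f \circ g)^*$, which is Proposition~\ref{funct}(1) after matching the inner map $g$ with the map called $f$ there and the outer map $f$ with the map called $g$ there; functoriality of $G$ converts these isomorphisms of bundles into the required equalities $K_0(\mathrm{id}_\Gamma) = \mathrm{id}_{K_0(\Gamma, F)}$ and $K_0(f \circ g) = K_0(g) \circ K_0(f)$.

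There is no serious computation to carry out, since all the substantial content already lives in Theorem~\ref{sumstab} and Proposition~\ref{funct}; the only point requiring care is the bookkeeping of variance, namely checking that the pullback reverses the direction of arrows so that $K_0(\cdot, F)$ is genuinely contravariant, and that the composition identity of Proposition~\ref{funct} is invoked with the correct relabeling of the two morphisms.
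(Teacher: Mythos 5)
Your proposal is correct and follows essentially the same route as the paper: the paper defines $K_0(f)([X]-[Y]) = [f^*X] - [f^*Y]$ and cites Proposition \ref{funct}, Theorem \ref{sumstab} and Corollary \ref{pulltrivial}, leaving the Grothendieck-group bookkeeping as ``arguments of routine.'' Your write-up simply makes those routine steps explicit (well-definedness via Proposition \ref{lemma0}, the monoid-morphism property of $f^*$, functoriality of the symmetrization $M \mapsto G(M)$, and the variance relabeling in Proposition \ref{funct}(1)), which is a faithful elaboration rather than a different argument.
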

\begin{proof}
The group morphism $K_0(f)$ is defined as $K_0(f)([X]-[Y]) = [f^*X] - [f^*Y]$.
The properties follow by  Proposition \ref{funct}, Theorem \ref{sumstab} and Corollary \ref{pulltrivial} with arguments of routine concerning the notion of Grothendieck group. 
\end{proof}

We end by stating an open problem which we encountered in our investigations.

\begin{problem}
    The computation of the network $K$-theory group $K_0(\Gamma, F)$ of a graph $\Gamma$ may appear quite challenging,  especially when the automorphism group of the graph $F$ is particularly wide. On the other hand, it could be a useful tool for classifying graphs. Furthermore, there seem to be no major difficulties in defining the concept of reduced $K$-theory of a network: this, similarly to the classical case, should be easier to compute and could still retain the information contained in the $K$-theory group.
\end{problem}

\section{The Cayley graph of a subdirect product of groups}\label{section5}
In this section, we will study the Cayley graphs of subdirect products of finite groups. These are graphs constructed from a group and a set of its generators. Cayley graphs are one of the most widely used tools in graph theory for the study of groups.

\begin{rem}
Note from \cite{bddr, spectrabook, crs, gross2} that for a finite group $G$ with $S$ set of generators such that $1_G \notin S$, we say that $S$ is  \textit{symmetric} if for every $s$ in $S$, then $s^{-1}$ is in $S$. Then the \textit{Cayley graph} of $G$ with respect to $S$ is the graph $Cay(G,S)$ whose vertex set is $G$ and $x \sim y$ if and only if there is a an element $s$ in $S$ such that $xs = y$. Note also that the $Cay(G,S)$ is a well defined graph since $S$ is symmetric. For a set $S$ which is not symmetric, then there is a notion of $Cay(G,S)$ in terms of  \textit{directed graph}. It is important to notice that for each group $G$, it is always possible to find a symmetric generators of $S$, and so it is possible to define $Cay(G, S)$. 
\end{rem}

The Cayley graphs defined with different choices of $S$ can be not isomorphic. This fact is highlighted in Fig. \ref{Z4cart} and (coherently with the notations in \cite{bbr1, b, bddr, hofmor}) we denote the cyclic group  on $n$ elements by $\mathbb{Z}(n)$ (here $n$ is a given natural number).

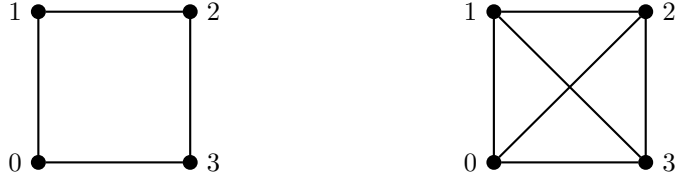
\begin{figure}[h]
\begin{tikzpicture}[scale=1 ]
\node[vertex,fill=black] (11) at  (-4, 1) [scale=0.8,label=left:$1$] {};
\node[vertex,fill=black] (12) at  (-2, 1) [scale=0.8,label=right:$2$] {};
\node[vertex,fill=black] (13) at  (-2, -1) [scale=0.8,label=right:$3$] {};
\node[vertex,fill=black] (14) at  (-4, -1) [scale=0.8,label=left:$0$] {};

\node[vertex,fill=black] (31) at  (2, 1) [scale=0.8,label=left:$1$] {};
\node[vertex,fill=black] (32) at  (4, 1) [scale=0.8,label=right:$2$] {};
\node[vertex,fill=black] (33) at  (4, -1) [scale=0.8,label=right:$3$] {};
\node[vertex,fill=black] (34) at  (2, -1) [scale=0.8,label=left:$0$] {};

\draw[edge, thick]  (11) --  (12) node[midway, below left] {};
\draw[edge, thick]  (12) --  (13) node[midway, below left] {};
\draw[edge, thick]  (13) --  (14) node[midway, below left] {};
\draw[edge, thick]  (14) --  (11) node[midway, below left] {};

\draw[edge, thick]  (31) --  (32) node[midway, below left] {};
\draw[edge, thick]  (32) --  (33) node[midway, below left] {};
\draw[edge, thick]  (33) --  (34) node[midway, below left] {};
\draw[edge, thick]  (34) --  (31) node[midway, below left] {};
\draw[edge, thick]  (31) --  (33) node[midway, below left] {};
\draw[edge, thick]  (32) --  (34) node[midway, below left] {};
\end{tikzpicture}\caption{on the left, $Cay(\mathbb{Z}(4), \{1,3\})$; on the right, $Cay(\mathbb{Z}(4), \{1,2,3\})$ }\label{Z4cart}
\end{figure}

This observation motivates the search for different generating sets. Indeed, the properties of Cayley graphs corresponding to different generating sets could reflect distinct properties of the group. Moreover, not all choices of generating sets may be truly useful (consider, for example, that if $S = G \setminus \{1_G\}$, then the Cayley graph is the complete graph regardless of $G$). Therefore, the choice of $S$ becomes particularly significant in making the Cayley graph an effective tool. For an example of this fact see \cite{bddr}.

\subsection{Subdirect product of groups}
In this subsection, we will introduce the concept of a subdirect product of groups. To this end, it is necessary to recall the following proposition.

\begin{prop}[See \cite{dh}, Chapter A, Theorem 19.1] \label{subdirectproducttheorem}
Let $A, B$ and $C$ be finite groups along with $\varepsilon_A: A \to C$ and $\varepsilon_B: B \to C$ epimorphisms with $\Delta_A = \ker (\varepsilon_A)$ and $\Delta_B = \ker (\varepsilon_B)$. If we consider $A \times B$,  $\bar{A}= A \times 1$, $\bar{B}=1 \times B$ and \begin{equation}E = \{(a,b) \mid a \in A, b \in B, \text{ and } \varepsilon_A(a)=\varepsilon_B(b) \}\end{equation}
then $E$ is a subgroup of $A \times B$ and there exist epimorphisms $\delta_A : E \to A$ and $\delta_B : E \to B$  such that the following conditions are satisfied:
\begin{itemize}
\item[{\rm (1).}] $\ker (\delta_A)=E \cap \bar{B} \cong \Delta_B$ and $\ker (\delta_B)=E \cap \bar{A} \cong \Delta_A$,
\item[{\rm (2).}] $\ker (\delta_A) \ker(\delta_B)=\Delta_A \times \Delta_B$, and
\item[{\rm (3).}] $E/(\Delta_A \times \Delta_B) \cong C$.
\end{itemize}
\end{prop}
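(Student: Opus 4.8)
The plan is to recognize $E$ as the fiber product (pullback) of $\varepsilon_A$ and $\varepsilon_B$ over $C$, to realize $\delta_A$ and $\delta_B$ as the restrictions to $E$ of the two canonical projections of $A \times B$, and then to obtain every assertion by a direct computation culminating in the first isomorphism theorem. First I would verify that $E$ is a subgroup of $A \times B$: since $\varepsilon_A$ and $\varepsilon_B$ are homomorphisms, for $(a,b),(a',b') \in E$ one has $\varepsilon_A(aa') = \varepsilon_A(a)\varepsilon_A(a') = \varepsilon_B(b)\varepsilon_B(b') = \varepsilon_B(bb')$, so $(aa',bb') \in E$; likewise $\varepsilon_A(a^{-1}) = \varepsilon_A(a)^{-1} = \varepsilon_B(b)^{-1} = \varepsilon_B(b^{-1})$ and $(1,1) \in E$, so $E \leq A \times B$. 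I then define $\delta_A$ and $\delta_B$ as the restrictions to $E$ of the projections $A \times B \to A$ and $A \times B \to B$; these are homomorphisms automatically. The only non-formal point is surjectivity: given $a \in A$, surjectivity of $\varepsilon_B$ yields $b \in B$ with $\varepsilon_B(b) = \varepsilon_A(a)$, whence $(a,b) \in E$ and $\delta_A(a,b) = a$, so $\delta_A$ is onto; the argument for $\delta_B$ is symmetric, using surjectivity of $\varepsilon_A$.

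For part (1) I would compute the kernels explicitly. A pair $(a,b) \in E$ lies in $\ker(\delta_A)$ exactly when $a = 1$, and the defining condition $\varepsilon_A(1) = \varepsilon_B(b)$ then forces $b \in \ker(\varepsilon_B) = \Delta_B$; thus $\ker(\delta_A) = \{(1,b) : b \in \Delta_B\}$, which is precisely $E \cap \bar{B}$, and the map $(1,b) \mapsto b$ is an isomorphism onto $\Delta_B$. Interchanging the roles of the two factors gives $\ker(\delta_B) = E \cap \bar{A} \cong \Delta_A$. For part (2) I would first note that $\Delta_A \times \Delta_B \subseteq E$, because $\varepsilon_A(a) = 1 = \varepsilon_B(b)$ whenever $a \in \Delta_A$ and $b \in \Delta_B$. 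Since $(1,b)(a,1) = (a,b)$, the set product $\ker(\delta_A)\ker(\delta_B)$ equals $\{(a,b) : a \in \Delta_A,\ b \in \Delta_B\} = \Delta_A \times \Delta_B$; both factors are normal in $E$ (as kernels of homomorphisms), so this product is genuinely a subgroup.

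For part (3) I would introduce the map
\[
\theta \colon E \longrightarrow C, \qquad \theta(a,b) = \varepsilon_A(a) = \varepsilon_B(b),
\]
which is well defined precisely by the defining relation of $E$ and is a homomorphism since $\varepsilon_A$ is. It is surjective: for $c \in C$ choose $a \in A$ with $\varepsilon_A(a) = c$ and $b \in B$ with $\varepsilon_B(b) = c$, so $(a,b) \in E$ and $\theta(a,b) = c$. Its kernel consists of those $(a,b) \in E$ with $\varepsilon_A(a) = \varepsilon_B(b) = 1$, that is $a \in \Delta_A$ and $b \in \Delta_B$, so $\ker(\theta) = \Delta_A \times \Delta_B$. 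The first isomorphism theorem then yields $E/(\Delta_A \times \Delta_B) \cong C$, completing (3).

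In truth there is no deep obstacle here: the statement is the group-theoretic shadow of the pullback construction, and each clause is a one-line computation. The two places demanding any care are the surjectivity (lifting) arguments for $\delta_A$ and $\delta_B$, which rely essentially on the hypothesis that $\varepsilon_A$ and $\varepsilon_B$ are \emph{epimorphisms}, and the identification of the product $\ker(\delta_A)\ker(\delta_B)$ with the full direct product $\Delta_A \times \Delta_B$ sitting inside $A \times B$, where one must check both the inclusion into $E$ and that the commuting coordinates make the set product exhaust $\Delta_A \times \Delta_B$.
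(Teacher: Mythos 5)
Your proof is correct and complete. One point to be aware of: the paper itself gives no proof of this proposition at all --- it is imported verbatim from the literature (Doerk--Hawkes, \emph{Finite soluble groups}, Chapter A, Theorem 19.1), so there is no internal argument to compare yours against. Your argument is the standard one: $E$ is the fiber product of $\varepsilon_A$ and $\varepsilon_B$ over $C$, the maps $\delta_A,\delta_B$ are the restricted coordinate projections (with surjectivity coming from lifting via the opposite epimorphism), the kernels are computed directly, and part (3) follows from the first isomorphism theorem applied to $\theta(a,b)=\varepsilon_A(a)=\varepsilon_B(b)$. This is exactly the viewpoint the paper endorses in the remark following its Definition of subdirect product, where it observes that the subdirect product of groups ``is nothing but the pullback of a group in the case where all the morphisms involved are surjective''; your proof makes that observation rigorous, and all the details (closure of $E$, the identification $\ker(\delta_A)=E\cap\bar{B}\cong\Delta_B$, the set-product computation $(1,b)(a,1)=(a,b)$ giving $\ker(\delta_A)\ker(\delta_B)=\Delta_A\times\Delta_B$, and $\ker(\theta)=\Delta_A\times\Delta_B$) check out.
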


Proposition \ref{subdirectproducttheorem} may be visualized in the following way: if we have surjective homomorphisms of groups \begin{equation} \varepsilon_A \ : \   a \in A \longmapsto \varepsilon_A(a) \in C  \ \ \mbox{and} \ \  \varepsilon_B \ : \  b \in B \longmapsto \varepsilon_B(b) \in C,\end{equation} then the canonical projections \begin{equation}\mathrm{pr}_A  :  (a,b) \in A \times B \mapsto \mathrm{pr}_A(a,b)=a \in A, \    \mathrm{pr}_B  :  (a,b) \in A \times B \mapsto \mathrm{pr}_A(a,b)=b \in B \end{equation} allow us to have the following commutative diagram
\begin{equation}
\adjustbox{scale=1.5}{
\begin{tikzcd}
E
\arrow[twoheadrightarrow, rrd, bend left, dashed, "\delta_B"]
\arrow[twoheadrightarrow, ddr, bend right, dashed, "\delta_A"]
\arrow[hook, rd, "\iota"] \\
& A \times B \arrow[twoheadrightarrow, r, "\mathrm{pr}_B"] \arrow[twoheadrightarrow, d, "\mathrm{pr}_A"]
& B \arrow[twoheadrightarrow, d, "\varepsilon_B"] \\
& A \arrow[twoheadrightarrow, r, "\varepsilon_A"]
& C \end{tikzcd}}
\end{equation}
where $E$ embeds naturally in $A \times B$ via $\iota$. This commutative diagram shows Proposition \ref{subdirectproducttheorem} (3), i.e.: 
\begin{equation}\label{subdirectproduct}
C \simeq \frac{E}{\Delta_A   \times  \Delta_B}, \ \ \mbox{where} \ \  \Delta_A \simeq \ker (\delta_A)  \ \mbox{and} \ \Delta_B \simeq  \ker (\delta_B).  
\end{equation}

\begin{defn}[See \cite{dh}, Chapter A, Definition 19.2]\label{subdirgroups}
We say that the subgroup $E$ of $A \times B$ from Proposition \ref{subdirectproducttheorem} is a \textit{subdirect product of (groups) $A$ and $B$ with amalgamated factor group $C$}, writing briefly $A \times_C B$. 
\end{defn}
\begin{rem}
Given three groups $A$, $B$, and $C$, and two morphisms $\phi_1: A \longrightarrow C$ and $\phi_2: B \longrightarrow C$, it is possible to define the \textit{pullback} of $B$ along $\phi_1$ by setting  
\begin{equation}
\phi_1^*B = \{(a,b) \in A \times B \mid \phi_1(a) = \phi_2(b)\}
\end{equation}  
endowed with the componentwise product.  
It can be observed that the subdirect product of two groups is nothing but the pullback of a group in the case where all the morphisms involved are surjective. This is the same approach we took in our manuscript when defining the subdirect product of a graph bundle: indeed, we first defined the notion of the pullback of a graph bundle along a generic graph morphism, and then we called \textit{subdirect product} the pullback obtained through a surjection that, in turn, arises from another graph bundle.
\end{rem}
\subsection{Graph bundles and morphisms of groups}
In this section, we aim to discuss the Cayley graphs of subdirect products of groups. As previously illustrated, the choice of generators is crucial for understanding the properties of the group. In particular, we will consider generating sets described by the following definition. Recall that every generating set we consider does not contain the identity of the group.

\begin{defn}\label{crosssectionadapted}
Let $\varphi: A \longrightarrow B$ be a surjective morphism between to finite groups $A$ and $B$. We say that a symmetric system of generators $S_1=\{s_1, \dots, s_n\}$   for $B$ is an $S_1$\textbf{-transversal section of $\varphi$}, if it is a symmetric subset $\widetilde{S}_1=\{\widetilde{s}_1, \dots, \widetilde{s}_n\}$ of $A$ such that $\varphi(\widetilde{s}_i) = s_i$ for each $i=1, \dots, n$.  We say that a symmetric system of generators $S_0$ for $\ker(\varphi)$ is \textbf{admissible} if
\begin{equation}
a \ S_0 \ a^{-1} \subseteq S_0 \ \ \ \ \  \mbox{if for each}  \ a \in A.
\end{equation}
We call \textbf{system of generators induced by  $\varphi$ and $S_1$}  a symmetric system of generators of $A$ which is defined by
\begin{equation}
    S_{\varphi} = S_0 \cup \widetilde{S}_1,
\end{equation}
where $\widetilde{S}_1$ is a $S_1$-transversal section and $S_0$ is admissible.
\end{defn}

Two observations are immediate here.

The first  is that for a given  surjective morphism of groups $\varphi : A \longrightarrow B$ and a symmetric system of generators $S_1$, it is always possible to construct a symmetric system of generators induced by $\varphi$ and $S_1$. Indeed, since $\varphi$ is surjective, it is always possible to find a $S_1$-transversal section of $\varphi$. Moreover, if necessary, $S_0 = \ker(\varphi)$ is admissible. In other words, there are always natural examples for Defintion \ref{crosssectionadapted}.

The second observation is that  whenever we have $s_i$ and $s_j$  generators in $S_1$ such that $s_j = s_i^{-1}$, if we choose an $S_1$-trasversal section $\widetilde{S}_1$ of $\phi$, then $\widetilde{s}_j^{-1} = \widetilde{s}_i^{-1}$, since $\widetilde{S}_1$ is symmetric.

We are ready to illustrate one of the most important results of the present contribution. Roughly speaking, under an appropriate choice of system of generators, we prove that the Cayley graph of the domain of a surjective morphism of groups is the total space of a graph bundle. In particular the base  is the Cayley graph of the image and the fiber is the Cayley graph of the kernel  of the morphism. An easy example is the case of a projection form a Cartesian product of groups onto one of its components. However our result is far more general and  Example \ref{last} shows some relevant circumstances in which our result applies.

\begin{thm}[Total Spaces of Graph Bundles as Cayley Graphs]\label{6thmaintheorem}
Let $\varphi: A \longrightarrow B$ be a surjective morphism between two finite groups $A$ and $B$ and let $S_1$ be a symmetric system of generators for $B$. Denote by $\widetilde{S}_1$ a transversal section of $B$, by $S_0$ a system of generators which is admissible and by $S_\varphi = S_0 \cup \widetilde{S}_1$ the system of generators of $A$ induced by $S_1$ and $\varphi$. Then the triple $(Cay(A, S_{\varphi}), \varphi, Cay(B, S_1))$ is a $Cay(\ker(\varphi), S_0)$-bundle.
\end{thm}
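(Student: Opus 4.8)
The plan is to verify directly the three conditions of Definition \ref{graphbundle} for the triple $(Cay(A, S_\varphi), \varphi, Cay(B, S_1))$, reading $\varphi$ as a map on vertex sets. First I would record the preliminary facts that make $\varphi$ a legitimate surjective graph morphism: since $\varphi$ is a group epimorphism it is surjective on vertices, and if $x \sim y$ in $Cay(A, S_\varphi)$, say $y = xs$ with $s \in S_\varphi = S_0 \cup \widetilde{S}_1$, then either $s \in S_0 \subseteq \ker(\varphi)$, whence $\varphi(x) = \varphi(y)$, or $s \in \widetilde{S}_1$, whence $\varphi(y) = \varphi(x)\varphi(s)$ with $\varphi(s) \in S_1$, so $\varphi(x) \sim \varphi(y)$; both cases are permitted by Definition \ref{morphism}. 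Note that $1_A \notin S_\varphi$ and $S_0 \cap \widetilde{S}_1 = \emptyset$, because the elements of $\widetilde{S}_1$ map onto $S_1 \not\ni 1_B$ and hence lie outside $\ker(\varphi)$, so that $S_\varphi \cap \ker(\varphi) = S_0$.

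For condition (1), I would observe that the fiber $\varphi^{-1}(b)$ is a coset $a_0 \ker(\varphi)$ (for any fixed $a_0$ with $\varphi(a_0)=b$), and that its induced edges $a_0 k \sim a_0 k'$ occur exactly when $a_0 k' = a_0 k s$ for some $s \in S_\varphi \cap \ker(\varphi) = S_0$, i.e. when $k' = ks$ with $s \in S_0$. Hence left translation $a_0 k \mapsto k$ is a graph isomorphism $\varphi^{-1}(b) \cong Cay(\ker(\varphi), S_0)$, giving the prescribed fiber.

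For condition (2), I would identify the horizontal graph $\widetilde{X}$: its edges are those $\{a, a'\}$ of $Cay(A,S_\varphi)$ with $\varphi(a) \neq \varphi(a')$, which are exactly the edges coming from $\widetilde{S}_1$. Each fiber of $\widetilde{\varphi}$ then has $|\ker(\varphi)|$ vertices, matching the number of vertices of the fiber $Cay(\ker(\varphi),S_0)$. To produce liftings, fix $a$ with $\varphi(a) = b$; the neighbors of $a$ in $\widetilde{X}$ are the distinct vertices $a\widetilde{s}_i$ (for $\widetilde{s}_i \in \widetilde{S}_1$, distinct from $a$ since $\widetilde{s}_i \neq 1_A$), which map under $\varphi$ bijectively onto the neighbors $b s_i$ of $b$ in $Cay(B,S_1)$ because $\varphi$ restricts to the bijection $\widetilde{S}_1 \to S_1$. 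Thus $\varphi$ carries the star $N(a)$ isomorphically onto the star $N(b)$, and its inverse is the required lifting $l_{b,a}: N(b) \to N(a)$, sending $bs_i \mapsto a\widetilde{s}_i$. This makes $\widetilde{\varphi}$ a $|\ker(\varphi)|$-fold covering.

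For condition (3) --- the crux --- I would compute the induced bijection between fibers. If $u \sim v$ in $Cay(B,S_1)$ with $v = us_i$, the lifting shows $\psi_{uv}: \varphi^{-1}(u) \to \varphi^{-1}(v)$ is the right translation $a \mapsto a\widetilde{s}_i$. Given an edge $a \sim a' = as_0$ (with $s_0 \in S_0$) in the fiber over $u$, I would write $\psi_{uv}(a') = as_0\widetilde{s}_i = (a\widetilde{s}_i)(\widetilde{s}_i^{-1}s_0\widetilde{s}_i) = \psi_{uv}(a)\,(\widetilde{s}_i^{-1}s_0\widetilde{s}_i)$; this is the step where the admissibility of $S_0$ is essential, since applying $aS_0a^{-1}\subseteq S_0$ with $a = \widetilde{s}_i^{-1}$ yields $\widetilde{s}_i^{-1}s_0\widetilde{s}_i \in S_0$, so that $\psi_{uv}(a) \sim \psi_{uv}(a')$. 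The same argument applied to the inverse translation by $\widetilde{s}_i^{-1}$ (legitimate since $\widetilde{S}_1$ is symmetric) shows that $\psi_{uv}^{-1}$ preserves edges as well, so $\psi_{uv}$ is an isomorphism of graphs. The main obstacle is precisely this point: without admissibility, right translation by a transversal element need not send $S_0$-edges to $S_0$-edges, and the fibers would fail to be isomorphic as graphs. Assembling conditions (1)--(3) completes the proof.
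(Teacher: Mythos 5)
Your proof is correct and follows essentially the same route as the paper's: a direct verification of the three conditions of Definition \ref{graphbundle}, with the same liftings $bs_i \mapsto a\widetilde{s}_i$, the same fiber bijections $a \mapsto a\widetilde{s}_i$, and admissibility of $S_0$ invoked at exactly the same decisive point via $\widetilde{s}_i^{-1}s_0\widetilde{s}_i \in S_0$. The only difference is in condition (1): you identify $\varphi^{-1}(b)$ with $Cay(\ker(\varphi), S_0)$ by the left translation $a_0k \mapsto k$, which preserves Cayley edges automatically, whereas the paper uses the right translation $y \mapsto yx^{-1}$ and therefore needs admissibility in that step as well --- your variant is marginally cleaner, since it isolates the use of admissibility entirely within condition (3).
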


\begin{proof}
First of all, note that $\varphi: Cay(A, S_{\varphi}) \longrightarrow Cay(B, S_1)$ is a surjective morphism of graphs. Indeed $\varphi$ is surjective by assumptions and it is a morphism of graphs since $\varphi(x) = \varphi(xh_i)$ for each $h_i$ in $S_0$ and $\varphi(x) \sim \varphi(x\widetilde{s}_i) = \varphi(x)s_i$ for each $\widetilde{s}_i$ in $\widetilde{S}_1$.
In order to conclude the proof it is sufficient to check  Definition \ref{graphbundle}:
\begin{itemize}
    \item[Step 1.] Given $b$ in $B$, denote by $\varphi^{-1}(b)$ the subgraph of $Cay(A, S_{\varphi})$ induced by the elements in the preimage of $b$. Let $x$  be in the preimage of $v$ and consider
\begin{equation}
       \sigma_b \ : \ \varphi^{-1}(b) \longrightarrow Cay(\ker(\varphi), S_0) \  \ \mbox{such that} \ \ y \longmapsto yx^{-1}. 
\end{equation}
The map $\sigma_b$ is an isomorphism of graphs. Indeed for each $y$ in the preimage of $b$, we have that $yx^{-1}$ is an element of $\ker(\varphi)$. Moreover $\sigma_b$ is bijection. It is necessary to prove that $\sigma_b$ and its inverse preserve the edges. Let $y$ and $z$ in the preimage of $b$. They are connected in $\varphi^{-1}(b)$ if and only if there is a $h_0$ in $S_0$ such that $yh_0 = z$. Then observe that 
\begin{equation}
    \sigma_b(z) = zx^{-1} = yh_0x^{-1} = yx^{-1}xh_0x^{-1} = \sigma_b(y)xh_0x^{-1} = \sigma_b(y)h_1,
\end{equation}
where $h_1$ is an element of $S_0$, since $S_0$ is admissible. This implies that $\sigma_b(z) \sim \sigma_b(y)$. On the other hand, let $\sigma_b^{-1}(k) = kx$ and let $k \sim kh_0$ in $Cay(\ker(\varphi), S_0)$. Then \begin{equation}\sigma_b^{-1}(kh_0) = \sigma_b^{-1}(k) x^{-1}h_0x = \sigma_b^{-1}(k)h_2,\end{equation} where $h_2$ is in $S_0$, because $S_0$ is admissible. Then also $\sigma_b^{-1}$ preserves the edges.

\item[Step 2.] Denote by $\widetilde{Cay(A, S_\phi)}$ the graph with the same vertex set of $Cay(A, S_\phi)$ but whose edges are the ones induced by $\widetilde{S}_1$. Let $\widetilde{\varphi}:\widetilde{Cay(A, S_\phi)} \longrightarrow Cay(B, S_1)$ be the graph morphism induced by $\varphi$. Then $\widetilde{\varphi}$ is a covering.
In order to prove this fact, given $b$ in $B$ and $x$ in the preimage of $b$, it is necessary to show that the restriction of $\varphi$ to the neighborhoods $N(x)$ and $N(b)$ of $x$ and $b$ respectively is an isomorphism of graphs. This follows easily from 
\begin{equation}
    \varphi(x\widetilde{s}_i) = \varphi(x)\varphi(\widetilde{s}_i) = bs_i,
\end{equation}
and from the fact that the map which associate to each $s_i$ the element $\widetilde{s}_i$ is a bijection.
Moreover, this implies the existence for each $b \sim c$ in $B$ of a bijection $\psi_{b,c}: \varphi^{-1}(b) \longrightarrow \varphi^{-1}(c)$. In particular, if $c = bs_i$, then
\begin{equation}
\psi_{b,bs_i}(x) = x\widetilde{s}_i. 
\end{equation}
\item[Step 3.] Now $\psi_{b,bs_i}: \varphi^{-1}(b) \longrightarrow \varphi^{-1}(bs_i)$ is an isomorphism of graphs. Consider $x \sim xh_0$ in $\varphi^{-1}(b)$. Then 
\begin{equation}
    \psi_{b,bs_i}(x) = x\widetilde{s}_i  \ \mbox{  and  }  \ \psi_{b,bs_i}(xh_0) = xh_0\widetilde{s}_i = \psi_{b,bs_i}(x)\widetilde{s}_i^{-1}h_0\widetilde{s}_i.
\end{equation}
Since $S_0$ is admissible,  $\widetilde{s}_i^{-1}h_0\widetilde{s}_i$ is in $S_0$ and so $\psi_{b,bs_i}$ preserves the edges. Arguing in the  same way, also $\psi_{b,bs_i}^{-1} = \psi_{bs_i,b}$ preserves the edges and this implies that $\psi_{b,bs_i}$ is an isomorphism.
\end{itemize}
\end{proof}

It is instructive to offer examples, not only as evidence of Theorem \ref{6thmaintheorem}, but to observe that the graphs of Example \ref{exem.bundle} (and other constructions which we have seen until now) appear also as  Cayley graphs.

\begin{example}\label{exem.uno}
Let $\phi_1: \mathbb{Z}(2) \times \mathbb{Z}(3) \longrightarrow \mathbb{Z}(3)$ be homomorphism of group  $\phi_1(x,y)= y$ which projects onto the second factor of the direct product $\mathbb{Z}(2) \times \mathbb{Z}(3)$. Here $\ker(\phi_1) = \mathbb{Z}(2) \times \{0\}$. Then consider $S_{0} = \{(1,0)\}$ as generating set for $\ker(\phi_1)$ and let $S_1 = \{1\}$ be a generating set for $\phi_1(\mathbb{Z}_2 \times \mathbb{Z}(3)) = \mathbb{Z}(3)$. Define the $S_1$-transversal section $\{(0,1)\}$ and let $S_{\phi_1} = \{(1,0), (0,1)\}$. Here the graph bundle $(Cay(\mathbb{Z}(2) \times \mathbb{Z}(3), S_{\varphi}), \phi_1, Cay(\mathbb{Z}(3), S_1))$ is exactly the 2-fold graph bundle $(K_2  \ \square \  C_3, p, C_3)$ in Example \ref{exem.bundle}.
\end{example}

With the same approach, we find another construction which we have discussed before.

\begin{example}\label{exem.due}
Now we consider $\phi_2: \mathbb{Z}(6) \longrightarrow \mathbb{Z}(3)$ homorphism groups which is defined as $\phi_2(x)= [x]$ where $[x]$ is the remainder of the division of $x$ with $3$. Observe that $\ker(\phi_2) = \{0,3\} \cong \mathbb{Z}(2)$. Fix $S_0=\{3\}$, as the generating set for $\ker(\phi_2)$, and choose $S_1=\{1\}$ so that $\widetilde{S}_1 = \{1\}$. Finally define $S_{\phi_2} = \{0,1,3\}$. Then the graph bundle $(Cay(\mathbb{Z}(6), S_{\varphi}), \phi_1, Cay(\mathbb{Z}(3), S_1))$ is the $K_2$-graph bundle $(M_3, q, C_3)$ which we have always seen in Example \ref{exem.bundle}.
\end{example}

Now we consider Definition \ref{subdirgroups}, beginning with $\phi_1: A_1 \longrightarrow B$ and $\phi_2: A_2 \longrightarrow B$  surjective homomorphisms of finite groups. We then form the subdirect product of groups $A_1 \times_{B} A_2$ with amalgamating subgroup $B$ and have by default $\varphi: A_1 \times_{B} A_2 \longrightarrow B$  surjective homomorphism of groups. Let $S_1$ be a symmetric system of generators of $B$, choose an $S_1$-section $\widetilde{S}_{1,i} = \{\widetilde{s}_{1,i}, \dots, \widetilde{s}_{n,i}\}$ of $\phi_i$ for each $i=1,2$ and  $S_{0,i}$ a symmetric system of generators of $\ker(\phi_i)$ which is admissible. Finally, fix the system of generators induced by $\phi_i$ and $S_1$ given by $S_{\phi_i} = \widetilde{S}_{1,i} \cup S_{0,i}$. On the other hand, we may define the $S_1$-section of $\varphi$
\begin{equation}\label{instrumental1}
\widetilde{S}_1 = \{(\widetilde{s}_{1,1},\widetilde{s}_{1,2}), \dots, (\widetilde{s}_{n,1},\widetilde{s}_{n,2})\}.
\end{equation}
Moreover let $S_0 = \overline{S}_{0,1} \cup \overline{S}_{0,2}$, where 
\begin{equation}\label{instrumental2}
\overline{S}_{0,1} = S_{0,1} \times {1_{A_2}} \mbox{ and } \overline{S}_{0,2} = {1_{A_1}} \times S_{0,2}. 
\end{equation}
Therefore we may consider also \begin{equation}\label{instrumental3}
S_{\varphi}=S_0 \cup \widetilde{S}_1
\end{equation}  system of generators induced by $\varphi$ and $S_1$.

We are in the position to prove a significant generalization of \cite[Theorem 3.6]{bddr}, which is somehow the main structural result behind several ideas and constructions  in \cite{bbr1,  b, bddr}.

\begin{thm}[Invariance of Subdirect Products by Cayley Operator]\label{7thmaintheorem}
Let $\phi_1: A_1 \longrightarrow B$ and $\phi_2: A_2 \longrightarrow B$ be two surjective homomorphisms of groups and consider the subdirect product of groups $ A_1 \times_{B} A_2$ along with \eqref{instrumental1}, \eqref{instrumental2} and \eqref{instrumental3}. Then 
\begin{equation}
Cay(A_1 \times_{B} A_2, S_{\varphi}) = Cay(A_1,S_{\phi_1}) \ \boxplus  \ Cay(A_2,S_{\phi_2}).
\end{equation}
\end{thm}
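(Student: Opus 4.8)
The plan is to establish the stated identity as a genuine equality of graphs: I would show that the two sides have literally the same vertex set and the same edge set, rather than merely exhibiting an isomorphism. First I would use Theorem~\ref{6thmaintheorem} to realize each $Cay(A_i, S_{\phi_i})$ as the total space of a graph bundle over the common base $\Gamma = Cay(B, S_1)$, with projection $\phi_i$ and fiber $Cay(\ker(\phi_i), S_{0,i})$; this is precisely what makes the right-hand side meaningful, since $\boxplus$ was defined (Definition~\ref{sub.dir.prod.}) only for bundles sharing a base. By construction $Cay(A_1, S_{\phi_1}) \boxplus Cay(A_2, S_{\phi_2})$ is then the total space of the pullback $\phi_1^*(Cay(A_2, S_{\phi_2}), \phi_2, \Gamma)$, whose vertex set unwinds, via Definition~\ref{pullback}, to $\{(a,b) \in A_1 \times A_2 \mid \phi_1(a) = \phi_2(b)\}$. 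This is exactly the underlying set of the subdirect product $A_1 \times_B A_2$, so the vertex sets of the two sides coincide exactly.

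The core of the argument is a comparison of edges. On the left, $(a,b) \sim (a',b')$ in $Cay(A_1 \times_B A_2, S_\varphi)$ iff $(a',b') = (a,b)\,s$ for some $s$ in $S_\varphi = \overline{S}_{0,1} \cup \overline{S}_{0,2} \cup \widetilde{S}_1$, which splits into three families according to whether $s$ lies in $\overline{S}_{0,1}$, in $\overline{S}_{0,2}$, or in $\widetilde{S}_1$. On the right, Definition~\ref{pullback} gives the three edge types I, II, III of the pullback. I would match these pairwise: a type~II edge ($a \sim a'$ in $Cay(A_1, S_{\phi_1})$, $\phi_1(a) = \phi_1(a')$, $b = b'$) corresponds to right multiplication by $\overline{S}_{0,1}$; a type~I edge ($a = a'$, $b \sim b'$ in $Cay(A_2, S_{\phi_2})$) to $\overline{S}_{0,2}$; and a type~III edge to $\widetilde{S}_1$. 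The mechanism that forces each correspondence is the fiber-product constraint $\phi_1(a) = \phi_2(b)$ imposed on both endpoints. For instance, in a type~II edge one has $a' = a\,r$ with $r \in S_{\phi_1}$, and the requirement $\phi_1(a) = \phi_1(a')$ forces $\phi_1(r) = 1_B$, hence $r \in \ker(\phi_1) \cap S_{\phi_1} = S_{0,1}$; the type~I case is symmetric.

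The step I expect to be the main obstacle is the type~III case, where the two coordinates must be shown to move by transversal elements with the \emph{same} index. Here $a' = a\,\widetilde{s}_{j,1}$ for some $j$ (the condition $\phi_1(a) \sim \phi_1(a')$ in $Cay(B, S_1)$ rules out the $S_{0,1}$-part), and independently $b' = b\,t$ for some $t \in S_{\phi_2}$. The endpoint constraint $\phi_1(a') = \phi_2(b')$, combined with $\phi_1(a) = \phi_2(b)$, yields $\phi_2(t) = \phi_1(\widetilde{s}_{j,1}) = s_j \neq 1_B$, so $t$ must lie in $\widetilde{S}_{1,2}$, say $t = \widetilde{s}_{k,2}$ with $\phi_2(\widetilde{s}_{k,2}) = s_k$; comparing images in $B$ gives $s_k = s_j$, and since the generators in $S_1$ are distinct this forces $k = j$. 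Thus $(a',b') = (a,b)(\widetilde{s}_{j,1}, \widetilde{s}_{j,2})$ with $(\widetilde{s}_{j,1},\widetilde{s}_{j,2}) \in \widetilde{S}_1$, matching the $\widetilde{S}_1$-family on the left. Running each of these equivalences in both directions shows that the two edge relations are identical, and together with the identification of vertex sets this yields the claimed equality of graphs.
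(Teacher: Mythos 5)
Your proposal is correct and follows essentially the same route as the paper: identify both vertex sets with $A_1 \times_B A_2$ via the pullback description, and then match the three generator families $\overline{S}_{0,1}$, $\overline{S}_{0,2}$, $\widetilde{S}_1$ with the edges of type II, I, III of $\phi_1^*(Cay(A_2,S_{\phi_2}), \phi_2, Cay(B,S_1))$. In fact you go a bit further than the printed proof, which only checks that every edge of $Cay(A_1 \times_B A_2, S_\varphi)$ is an edge of the bundle and then asserts that the result follows; your converse inclusion --- in particular the type III analysis where the fiber-product constraint together with $\ker(\phi_i) \cap S_{\phi_i} = S_{0,i}$ and the distinctness of the generators $s_j \in S_1$ forces both coordinates to move by transversal elements of the \emph{same} index --- supplies exactly the half of the edge-set equality that the paper leaves implicit.
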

\begin{proof}
First, we note that $Cay(A_1 \times_{B} A_2, S_{\varphi})$ and $Cay(A_1,S_{\phi_1}) \boxplus Cay(A_2,S_{\phi_2})$ have the same set of vertices, which is $A_1 \times_{B} A_2$.

Let $\{(x,y),(x',y')\}$ be an edge of $Cay(A_1 \times_{B} A_2, S_{\varphi})$. Assume that there is an element $(h_1, 1_{A_2})$ in $\overline{S}_{0,1}$ such that $(x', y') = (x,y)(h_1,1_{A_2})=  (xh_1,y)$. This is an edge of $Cay(A_1,S_{\phi_1}) \boxplus Cay(A_2,S_{\phi_2})$, in particular it is an edge of type II, if we consider the subdirect product as the total space of the graph bundle $\phi_1^*(Cay(A_2,S_{\phi_2}), \phi_2, Cay(B,S_1))$. On the other hand,  $(1_{A_1}, h_2)$ is an element of $\overline{S}_{0,2}$ such that $(x', y') = (x,y)(1_{A_1}, h_2) =  (x,yh_2)$. Then the edge $\{(x,y), (x,yh_2)\}$ can be seen as an edge of type I of the graph bundle $\phi_1^*(Cay(A_2,S_{\phi_2}), \phi_2, Cay(B,S_1))$. 
Finally, let $(\widetilde{s}_{j,1},\widetilde{s}_{j,2})$ be an element in $\widetilde{S}_1$ such that $(x', y') = (x,y)(\widetilde{s}_{j,1},\widetilde{s}_{j,2}) =  (x\widetilde{s}_{j,1},y\widetilde{s}_{j,2})$. Then the edge $\{(x,y),(x', y')\}$ is a type III edge of the graph bundle $\phi_1^*(Cay(A_2,S_{\phi_2}), \phi_2, Cay(B,S_1))$. The result follows.
\end{proof}

\begin{figure}
\begin{tikzpicture}[scale=1 ]
\node[vertex,fill=black] (11) at  (-5, 1) [scale=0.8,label=left:$1a$] {};
\node[vertex,fill=black] (12) at  (-3, 1) [scale=0.8,label=right:$1b$] {};
\node[vertex,fill=black] (13) at  (-3, -1) [scale=0.8,label=below right:$1c$] {};
\node[vertex,fill=black] (14) at  (-5, -1) [scale=0.8,label=left:$1d$] {};

\node[vertex,fill=black] (21) at  (-1, 6.75) [scale=0.8,label=left:$2a$] {};
\node[vertex,fill=black] (22) at  (1, 6.75) [scale=0.8,label=right:$2b$] {};
\node[vertex,fill=black] (23) at  (1, 4.75) [scale=0.8,label=right:$2c$] {};
\node[vertex,fill=black] (24) at  (-1, 4.75) [scale=0.8,label=left:$2d$] {};

\node[vertex,fill=black] (31) at  (3, 1) [scale=0.8,label=left:$3a$] {};
\node[vertex,fill=black] (32) at  (5, 1) [scale=0.8,label=right:$3b$] {};
\node[vertex,fill=black] (33) at  (5, -1) [scale=0.8,label=right:$3c$] {};
\node[vertex,fill=black] (34) at  (3, -1) [scale=0.8,label=below left:$3d$] {};

\draw[edge, thick]  (11) --  (12) node[midway, below left] {};
\draw[edge, thick]  (12) --  (13) node[midway, below left] {};
\draw[edge, thick]  (13) --  (14) node[midway, below left] {};
\draw[edge, thick]  (14) --  (11) node[midway, below left] {};

\draw[edge, thick]  (21) --  (22) node[midway, below left] {};
\draw[edge, thick]  (22) --  (23) node[midway, below left] {};
\draw[edge, thick]  (23) --  (24) node[midway, below left] {};
\draw[edge, thick]  (24) --  (21) node[midway, below left] {};

\draw[edge,thick]  (31) --  (32) node[midway, below left] {};
\draw[edge,thick]  (32) --  (33) node[midway, below left] {};
\draw[edge,thick]  (33) --  (34) node[midway, below left] {};
\draw[edge,thick]  (34) --  (31) node[midway, below left] {};

\draw[edge]  (11) --  (21) node[midway, below left] {};
\draw[edge]  (12) --  (22) node[midway, below left] {};
\draw[edge]  (13) --  (23) node[midway, below left] {};
\draw[edge]  (14) --  (24) node[midway, below left] {};

\draw[edge]  (31) --  (21) node[midway, below left] {};
\draw[edge]  (32) --  (22) node[midway, below left] {};
\draw[edge]  (33) --  (23) node[midway, below left] {};
\draw[edge]  (34) --  (24) node[midway, below left] {};

\draw[edge]  (11) --  (34) node[midway, below left] {};
\draw[edge]  (12) --  (33) node[midway, below left] {};
\draw[edge]  (13) --  (32) node[midway, below left] {};
\draw[edge]  (14) --  (31) node[midway, below left] {};

\end{tikzpicture}\caption{The graph $(K_2 \square C_3) \boxplus M_3$.}\label{final}
\end{figure}
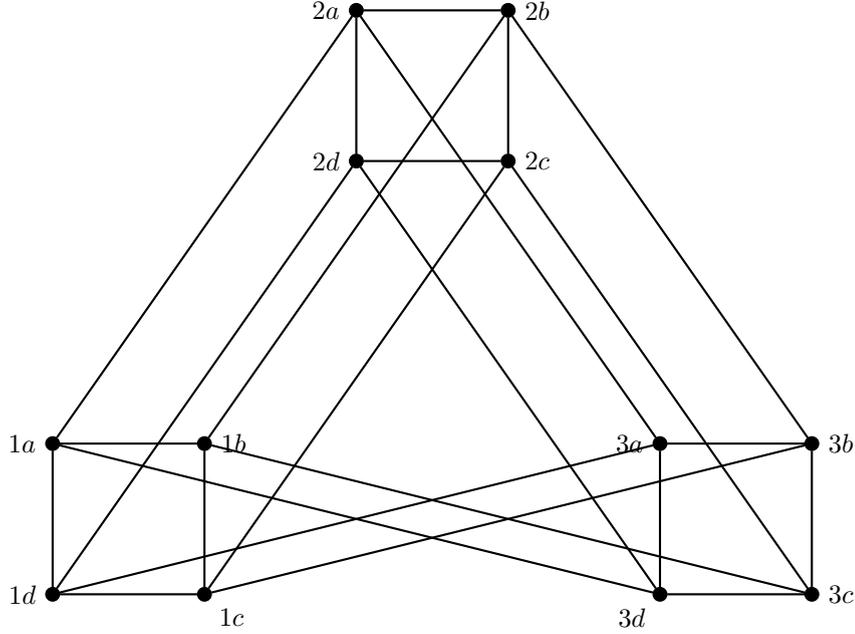

It is instructive to refer to Theorem \ref{7thmaintheorem} as a "result of invariance", since it allows us to look at "$Cay$" as an operator from finite groups to finite graphs with the nice property of transformiing a subdirect product of finite groups in a subdirect product of graph bundles. 

\begin{example}\label{last}
 Let $\phi_1: \mathbb{Z}(3) \times \mathbb{Z}(3) \longrightarrow \mathbb{Z}(3)$ and $\phi_2: \mathbb{Z}(6) \longrightarrow \mathbb{Z}_3$ be the homomorphisms of groups in Examples \ref{exem.uno} and \ref{exem.due}. We now consider the subdirect product of these abelian groups, namely \begin{equation}
 \mathcal{Z} = (\mathbb{Z}(3) \times \mathbb{Z}(3))\times_{\mathbb{Z}(3)} \mathbb{Z}(6)\end{equation}  and denote by $\varphi: \mathcal{Z} \longrightarrow  \mathbb{Z}(3)$  the homomorphism of groups which is induced by $(\phi_1, \phi_2)$.
 
 In this situation, if  $\overline{S}_{0,1} = \{(1,0,0)\}$ and $\overline{S}_{0,2} = \{(0,0,3)\}$. Then $S_0= \{(1,0,0), (0,0,3)\}$ is the admissible generating set of $\ker(\varphi)$. On the other hand, if $S_1 = \{1\}$ and  $\widetilde{S}_1 = \{(0,1,1)\}$,  then  the $Cay(\ker(\varphi), S_0)$-graph bundle $(Cay(\mathcal{Z}, S_{\varphi}), \varphi, Cay(\mathbb{Z}(3), S_1))$ is exactly the graph bundle $((K_2 \square C_3), p, C_3) \boxplus (M_3, q, C_3)$, whose total space is depicted in Fig. \ref{final}.
\end{example}

We end with an open problem: it would be interesting to study the spectrum of the total spaces of these graph bundles and, in particular, their applications on the structural properties of relevan classes of finite groups. It does not seem possible to find a precise and general description of the spectrum, exactly as for graph coverings, which are a special case of graph bundles. However, it might be possible to find some spectral results for some specific families of graphs.

\begin{problem} Find  spectral properties of the graph bundles which can be constructed as subdirect product of Cayley graphs as per Theorem \ref{7thmaintheorem}.    
\end{problem}

\end{document}